 \newtheorem{theorem}{Theorem}[section]
 \newtheorem{corollary}[theorem]{Corollary}
 \newtheorem{lemma}[theorem]{Lemma}
 \newtheorem{proposition}[theorem]{Proposition}
 \theoremstyle{definition}
 \newtheorem{definition}[theorem]{Definition}
 \theoremstyle{remark}
 \newtheorem{remark}[theorem]{Remark}
 \newtheorem{example}{Example}
 \newtheorem*{problem}{Problem}
 \numberwithin{equation}{section}
\begin{document}
\title[Constrained energy problems with external fields]
 {Constrained energy problems with external fields\\ for infinite dimensional vector measures}
\author[N.~Zorii]{Natalia Zorii}

\address{%
Institute of Mathematics\\
National Academy of Sciences of Ukraine\\
3 Tereshchenkivska Str.\\
01601, Kyiv-4\\
Ukraine}

\email{natalia.zorii@gmail.com}



\date{December 31, 2009}

\begin{abstract}
We consider a constrained minimal energy problem with an external
field over noncompact classes of infinite dimensional vector
measures $(\mu^i)_{i\in I}$ on a locally compact space. The
components $\mu^i$ are positive measures with the properties $\int
g_i\,d\mu^i=a_i$ and $\sigma^i-\mu^i\geqslant0$ (where $a_i$, $g_i$,
and~$\sigma^i$ are given) and supported by closed sets~$A_i$ with
the sign~$+1$ or~$-1$ prescribed such that $A_i\cap A_j=\varnothing$
whenever ${\rm sign}\,A_i\ne{\rm sign}\,A_j$, and the law of
interaction of $\mu^i$, $i\in I$, is determined by the matrix
$\bigl({\rm sign}\,A_i\,{\rm sign}\,A_j\bigr)_{i,j\in I}$. For all
positive definite kernels satisfying Fuglede's condition of
consistency between the vague (=\,weak$^*$) and strong topologies,
sufficient conditions for the existence of minimizers are
established and their uniqueness and vague compactness are studied.
Examples illustrating the sharpness of the sufficient conditions are
provided. We also analyze continuity properties of minimizers in the
vague and strong topologies when $A_i$ and~$\sigma^i$ are varied
simultaneously. The results are new even for classical kernels in
$\mathbb R^n$, which is important in applications.
\end{abstract}

\maketitle

\section{Introduction}

In all that follows, $\mathrm X$ denotes a locally compact Hausdorff
space and $\mathfrak M=\mathfrak M(\mathrm X)$ the linear space of
all real-valued scalar Radon measures~$\nu$ on~$\mathrm X$ equipped
with the {\em vague\/} \mbox{($={}${\em weak}$^*$)} topology, i.e.,
the topology of pointwise convergence on the class $\mathrm
C_0(\mathrm X)$ of all real-valued continuous functions~$\varphi$
on~$\mathrm X$ with compact support.

A {\em kernel\/}~$\kappa$ on $\mathrm X$ is meant to be an element
from $\mathrm\Phi(\mathrm X\times\mathrm X)$, where
$\mathrm\Phi(\mathrm Y)$ consists of all lower semicontinuous
functions~$\psi:\mathrm Y\to(-\infty,\infty]$ such that
$\psi\geqslant0$ unless $\mathrm Y$ is compact. Given
$\nu,\nu_1\in\mathfrak M$, the {\em mutual energy\/} and the {\it
potential\/} relative to the kernel~$\kappa$ are defined by
\[
\kappa(\nu,\nu_1):=\int\kappa(x,y)\,d(\nu\otimes\nu_1)(x,y)\quad\text{and}\quad
\kappa(\,\cdot\,,\nu):=\int\kappa(\,\cdot\,,y)\,d\nu(y),
\]
respectively. (When introducing notation, we always tacitly assume
the corresponding object on the right to be well defined --- as a
finite number or $\pm\infty$.)

For $\nu=\nu_1$ the mutual energy $\kappa(\nu,\nu_1)$ defines the
{\em energy\/} $\kappa(\nu,\nu)$ of~$\nu$. We denote by $\mathcal
E=\mathcal E_\kappa(\mathrm X)$ the set of all $\nu\in\mathfrak M$
with $-\infty<\kappa(\nu,\nu)<\infty$.

We shall mainly be concerned with a {\em positive definite\/}
kernel~$\kappa$, which means that it is symmetric (i.e.,
$\kappa(x,y)=\kappa(y,x)$ for all $x,y\in\mathrm X$) and the energy
$\kappa(\nu,\nu)$, $\nu\in\mathfrak M$, is nonnegative whenever
defined. Then $\mathcal E$ forms a pre-Hil\-bert space with the
scalar product $\kappa(\nu,\nu_1)$ and the seminorm
$\|\nu\|_\mathcal E:=\|\nu\|_\kappa:=\sqrt{\kappa(\nu,\nu)}$
(see~\cite{F1}); the topology on~$\mathcal E$, determined by this
seminorm, is called {\em strong}. A positive definite
kernel~$\kappa$ is {\em strictly positive definite\/} if the
seminorm $\|\cdot\|_\mathcal E$ is a norm.

Given a closed set $F\subset\mathrm X$, we denote by $\mathfrak
M^+(F)$ the convex cone of all nonnegative $\nu\in\mathfrak M$
supported by~$F$, and let $\mathcal E^+(F):=\mathfrak
M^+(F)\cap\mathcal E$. Also write $\mathfrak M^+:=\mathfrak
M^+(\mathrm X)$ and $\mathcal E^+:=\mathcal E^+(\mathrm X)$.

We consider a countable, locally finite collection
$\mathbf{A}=(A_i)_{i\in I}$ of fixed closed sets $A_i\subset\mathrm
X$ with the sign~$+1$ or $-1$ prescribed such that the oppositely
signed sets are mutually disjoint. Let $\mathfrak M^+(\mathbf{A})$
stand for the Cartesian product $\prod_{i\in I}\,\mathfrak
M^+(A_i)$; then an element $\boldsymbol{\mu}$ of $\mathfrak
M^+(\mathbf{A})$ is a (nonnegative) {\em vector measure\/}
$(\mu^i)_{i\in I}$ with the components $\mu^i\in\mathfrak M^+(A_i)$.
The topology of the product space $\prod_{i\in I}\,\mathfrak
M^+(A_i)$, where every $\mathfrak M^+(A_i)$ is endowed with the
vague topology, is likewise called {\em vague\/}.

If a vector measure $\boldsymbol{\mu}\in\mathfrak M^+(\mathbf{A})$
and a vector-valued function $\mathbf{u}=(u_i)_{i\in I}$ with
$\mu^i$-measurable components $u_i:A_i\to[-\infty,\infty]$ are
given, then for brevity we write\footnote{Here and in the sequel, an
expression $\sum_{i\in I}\,c_i$ is meant to be well defined provided
so is every summand~$c_i$ and the sum does not depend on the order
of summation
--- though might be $\pm\infty$.  Then, by Riemann series theorem,
the sum is finite if and only if the series converges absolutely.}
\begin{equation*}\langle\mathbf{u},\boldsymbol{\mu}\rangle:=\sum_{i\in I}\,\int
u_i\,d\mu^i.\label{notation}\end{equation*}

Let a kernel~$\kappa$ be fixed. In accordance with an electrostatic
interpretation of a condenser, we assume that the interaction
between the charges lying on the conductors~$A_i$, $i\in I$, is
characterized by the matrix $(\alpha_i\alpha_j)_{i,j\in I}$, where
$\alpha_i:={\rm sign}\,A_i$. Then the {\em energy\/} of
$\boldsymbol{\mu}\in\mathfrak M^+(\mathbf{A})$ is defined by the
formula
\begin{equation*}\label{vectorenergy}\kappa(\boldsymbol{\mu},\boldsymbol{\mu}):=\sum_{i,j\in
I}\,\alpha_i\alpha_j\kappa(\mu^i,\mu^j).\end{equation*} We denote by
$\mathcal E^+(\mathbf{A})$ the set of all
$\boldsymbol{\mu}\in\mathfrak M^+(\mathbf{A})$ with
$-\infty<\kappa(\boldsymbol{\mu},\boldsymbol{\mu})<\infty$.

Also fix a vector-valued function $\mathbf{f}=(f_i)_{i\in I}$,
treated as an {\em external field\/}, and assume it to satisfy one
of the following two cases:\medskip

\begin{tabular}{rl}
{\em Case\/} I.& $f_i\in\mathrm\Phi(\mathrm X)$ {\em for all\/} $i\in I$;\\[2pt] {\it
Case\/} II.& $f_i=\alpha_i\kappa(\cdot,\zeta)$ {\em for all $i\in
I$, where
$\zeta\in\mathcal E$ is a signed measure\/}.\\
\end{tabular}\medskip

\noindent Furthermore, suppose each~$f_i$ to affect the charges
on~$A_i$ only; then the $\mathbf{f}$-{\em weighted energy\/} of
$\boldsymbol{\mu}\in\mathfrak M^+(\mathbf{A})$ is given by the
expression
\begin{equation}\label{wen}G_{\mathbf{f}}(\boldsymbol{\mu}):=
\kappa(\boldsymbol{\mu},\boldsymbol{\mu})+2\langle\mathbf{f},\boldsymbol{\mu}\rangle.\end{equation}
Let $\mathcal E_{\mathbf f}^+(\mathbf{A})$ consist of all
$\boldsymbol{\mu}\in\mathcal E^+(\mathbf{A})$ with
$-\infty<G_{\mathbf{f}}(\boldsymbol{\mu})<\infty$.

Also fix a vector measure $\boldsymbol{\sigma}\in\mathfrak
M^+(\mathbf{A})$, serving as a {\em constraint\/}, a numerical
vector $\mathbf{a}=(a_i)_{i\in I}$ with $a_i>0$ for all~$i\in I$,
and a vector-valued function $\mathbf{g}=(g_i)_{i\in I}$, where all
the $g_i:A_i\to(0,\infty)$ are continuous. In the study, we are
interested in the problem of minimizing
$G_{\mathbf{f}}(\boldsymbol{\mu})$ over the class of all
$\boldsymbol{\mu}\in\mathcal E_{\mathbf f}^+(\mathbf{A})$ with the
properties that $\langle g_i,\mu^i\rangle=a_i$ and
$\sigma^i-\mu^i\geqslant0$ for all $i\in I$.

Along with its electrostatic interpretation, such a problem has
found various important applications in approximation theory (see,
e.g.,~\cite{D,DS,R}).

The main question is whether
minimizers~$\lambda^{\boldsymbol{\sigma}}_{\mathbf{A}}$ in the
constrained minimal $\mathbf{f}$-weighted energy problem exist. If
$\mathbf{A}$ is finite, all the~$A_i$ are compact, $\kappa(x,y)$ is
continuous on $A_\ell\times A_j$ whenever $\alpha_\ell\ne\alpha_j$,
and Case~I takes place, then the existence of
those~$\lambda^{\boldsymbol{\sigma}}_{\mathbf{A}}$ can easily be
established by exploiting the vague topology only, since then the
class of admissible vector measures is vaguely compact, while
$G_{\mathbf{f}}(\boldsymbol{\mu})$ is vaguely lower semicontinuous
(cf.~\cite{GR,NS,O,ST}).

However, these arguments break down if any of the above-mentioned
four assumptions is dropped, and then the problem on the existence
of minimizers becomes rather nontrivial. In particular, the class of
admissible vector measures is no longer vaguely compact if any of
the~$A_i$ is noncompact. Another difficulty is that
$G_{\mathbf{f}}(\boldsymbol{\mu})$ might not be vaguely lower
semicontinuous when Case~II holds.

To solve the problem on the existence of
minimizers~$\lambda^{\boldsymbol{\sigma}}_{\mathbf{A}}$ in the
general case, we restrict ourselves to a positive definite
kernel~$\kappa$ and develop an approach based on the following
crucial arguments.

The set $\mathcal E^+(\mathbf{A})$ is shown to be a {\it
semimetric\/} space with the semimetric
\begin{equation}\label{vseminorm}
\|\boldsymbol{\mu}_1-\boldsymbol{\mu}_2\|_{\mathcal
E^+(\mathbf{A})}:=\Bigl[\sum_{i,j\in
I}\,\alpha_i\alpha_j\kappa(\mu^i_1-\mu^i_2,\mu^j_1-\mu^j_2)\Bigr]^{1/2},
\end{equation}
and one can define an inclusion $R$ of $\mathcal E^+(\mathbf{A})$
into the pre-Hilbert space~$\mathcal E$ such that $\mathcal
E^+(\mathbf{A})$ becomes {\em isometric\/} to its $R$-image, the
latter being regarded as a semimetric subspace of~$\mathcal E$ (see
Theorem~\ref{lemma:semimetric}). Similar to the terminology
in~$\mathcal E$, we therefore call the topology of the semimetric
space $\mathcal E^+(\mathbf{A})$ {\em strong\/}.

Another crucial fact is that, for rather general $\kappa$,
$\mathbf{g}$, and~$\mathbf{a}$, the topological subspace of
$\mathcal E^+(\mathbf{A})$ consisting of all $\boldsymbol{\mu}$ such
that $\langle g_i,\mu^i\rangle\leqslant a_i$ and
$\sigma^i-\mu^i\geqslant0$ for all $i\in I$ turns out to be {\it
strongly complete\/} (see Theorem~\ref{th:strong}).

Using these arguments, we obtain sufficient conditions for the
existence of minimizers~$\lambda^{\boldsymbol{\sigma}}_{\mathbf{A}}$
and establish statements on their uniqueness and vague compactness
(see Lemma~\ref{lemma:unique} and Theorem~\ref{exist}). Examples
illustrating the sharpness of the sufficient conditions are provided
(see Sec.~\ref{sharp}). We also analyze continuity properties
of~$\lambda^{\boldsymbol{\sigma}}_{\mathbf{A}}$ relative to the
vague and strong topologies when both $\boldsymbol{\sigma}$
and~$\mathbf A$ are varied (see Theorems~\ref{cor:sigma},
\ref{cor:cont} and Corollaries~\ref{contcor1}, \ref{contcor2}).

The results obtained hold true, e.g., for the Newtonian, Green or
Riesz kernels in~$\mathbb R^n$, $n\geqslant2$, as well as for the
restriction of the logarithmic kernel in~$\mathbb R^2$ to the open
unit disk, which is important in applications.

\section{Preliminaries: topologies, consistent and perfect
kernels}\label{sec:2}

In all that follows, we suppose the kernel~$\kappa$ to be positive
definite. In addition to the {\em strong\/} topology on~$\mathcal
E$, determined by the seminorm $\|\nu\|:=\|\nu\|_\mathcal
E:=\|\nu\|_\kappa:=\sqrt{\kappa(\nu,\nu)}$, it is often useful to
consider the {\em weak\/} topology on~$\mathcal E$, defined by means
of the seminorms $\nu\mapsto|\kappa(\nu,\mu)|$, $\mu\in\mathcal E$
(see~\cite{F1}). The Cauchy--Schwarz inequality
\begin{equation*}
|\kappa(\nu,\mu)|\leqslant\|\nu\|\,\|\mu\|,\quad\text{where\ }
\nu,\mu\in\mathcal E,\end{equation*} implies immediately that the
strong topology on $\mathcal E$ is finer than the weak one.

In~\cite{F1,F2}, B.~Fuglede introduced the following two {\it
equivalent\/} properties of consistency between the induced strong,
weak, and vague topologies on~$\mathcal E^+$:\smallskip
\begin{itemize}
\item[\rm(C$_1$)] {\em Every strong Cauchy net in
$\mathcal E^+$ converges strongly to any of its vague cluster
points;}
\item[\rm(C$_2$)] {\em Every strongly bounded and vaguely convergent net in
$\mathcal E^+$ converges weakly to the vague limit.}
\end{itemize}

\begin{definition}
Following Fuglede~\cite{F1}, we call a kernel~$\kappa$ {\it
consistent} if it satisfies either of the properties~(C$_1$)
and~(C$_2$), and {\em perfect\/} if, in addition, it is strictly
positive definite.\end{definition}

\begin{remark} One has to consider {\em nets\/} or {\em filters\/}
in~$\mathfrak M^+$ instead of sequences, since the vague topology in
general does not satisfy the first axiom of countability. We follow
Moore's and Smith's theory of convergence, based on the concept of
nets (see~\cite{MS}; cf.~also~\cite[Chap.~0]{E2} and
\cite[Chap.~2]{K}). However, if $\mathrm X$ is metrizable and
countable at infinity, then $\mathfrak M^+$ satisfies the first
axiom of countability (see~\cite[Lemma~1.2.1]{F1}) and the use of
nets may be avoided.\end{remark}

\begin{theorem}[\rm Fuglede \cite{F1}]\label{th:1} A kernel $\kappa$ is perfect if and
only if $\mathcal E^+$ is strongly complete and the strong topology
on~$\mathcal E^+$ is finer than the vague one.\end{theorem}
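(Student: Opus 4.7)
The plan is to prove the two implications separately. For the easier direction (sufficiency), I would first establish strict positive definiteness by an interleaving argument. Given $\nu\in\mathcal E$ with $\|\nu\|_\kappa=0$, Jordan-decompose $\nu=\nu^+-\nu^-$; both summands belong to $\mathcal E^+$. Form the interleaved sequence $\nu^+,\nu^-,\nu^+,\nu^-,\ldots$ in $\mathcal E^+$, which is strongly Cauchy since $\|\nu^+-\nu^-\|_\kappa=\|\nu\|_\kappa=0$. Strong completeness delivers a strong limit $\mu\in\mathcal E^+$, and the assumption that the strong topology refines the vague one promotes $\mu$ to the vague limit; Hausdorffness of the vague topology then forces $\nu^+=\nu^-=\mu$, whence $\nu=0$. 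Consistency property~(C$_1$) is then almost immediate: for a strong Cauchy net $(\nu_\alpha)\subset\mathcal E^+$ with vague cluster point $\nu_0$, the strong limit $\nu^*\in\mathcal E^+$ produced by completeness must also be the vague limit by the topological hypothesis, so $\nu^*=\nu_0$, and $\nu_\alpha\to\nu_0$ strongly.

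For the converse (necessity), assume $\kappa$ is perfect. Strong completeness is obtained as follows: for a strong Cauchy net $(\nu_\alpha)\subset\mathcal E^+$, if one can exhibit a vague cluster point $\nu_0\in\mathfrak M^+$, then~(C$_1$) supplies strong convergence $\nu_\alpha\to\nu_0$ and hence $\nu_0\in\mathcal E^+$. The assertion that the strong topology is finer than the vague one is tackled by contradiction: if a strongly convergent net $\nu_\alpha\to\nu$ fails to converge vaguely, then some subnet avoids a vague neighborhood of $\nu$; using vague relative compactness of strong Cauchy nets one extracts a further subnet converging vaguely to some $\nu'\ne\nu$. Applying~(C$_1$) to that sub-subnet produces a strong limit equal to $\nu'$, while strict positive definiteness identifies strong limits uniquely, forcing $\nu'=\nu$, a contradiction.

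The main obstacle in both parts of the necessity direction is to produce vague cluster points of a strong Cauchy net. Such a net is norm-bounded in $\mathcal E^+$, so by the Banach--Alaoglu theorem in $\mathfrak M^+$ the task reduces to verifying $\sup_\alpha\int\varphi\,d\nu_\alpha<\infty$ for every $\varphi\in\mathrm C_0(\mathrm X)$ with $\varphi\geqslant 0$. The natural strategy is to dominate $\varphi$ pointwise by a potential $\kappa(\,\cdot\,,\eta)$ of some $\eta\in\mathcal E^+$ with compact support, so that Cauchy--Schwarz yields $\int\varphi\,d\nu_\alpha\leqslant\kappa(\nu_\alpha,\eta)\leqslant\|\nu_\alpha\|_\kappa\|\eta\|_\kappa$. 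Constructing such a dominating $\eta$ for an arbitrary $\varphi$, exploiting the positive definiteness and consistency of $\kappa$, is the delicate technical step I expect to cost the most effort.
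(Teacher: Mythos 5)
The paper itself offers no proof of this statement --- it is quoted from Fuglede's memoir [F1] (his Theorem~3.3, with the roles of definition and theorem interchanged: Fuglede \emph{defines} perfectness by strong completeness of $\mathcal E^+$ plus the topological comparison, and proves equivalence with ``consistent and strictly positive definite''). Your overall architecture coincides with his. The sufficiency direction is correct and complete: the interleaving trick (or, even more simply, the constant net $\nu^-,\nu^-,\dots$, which is strong Cauchy and converges strongly, hence vaguely, to $\nu^+$) does yield strict positive definiteness, and your derivation of (C$_1$) is sound, granting the standard convention that $\nu\in\mathcal E$ forces $\nu^\pm\in\mathcal E^+$.

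The genuine gap is exactly where you flag it, and it affects \emph{both} halves of the necessity direction (strong completeness \emph{and} the comparison of topologies both need vague cluster points of strong Cauchy nets). The step you leave open --- that a strongly bounded net in $\mathcal E^+$ is vaguely bounded --- is not a routine afterthought, and the route you sketch for it is the wrong one. Dominating $\varphi\in\mathrm C_0^+(\mathrm X)$ \emph{pointwise everywhere} by a potential $\kappa(\cdot,\eta)$ is in general unattainable: the natural candidate $\eta$ is the equilibrium measure $\theta_K$ of $K:={\rm supp}\,\varphi$, and its potential dominates a constant only \emph{nearly everywhere} on $K$ (irregular points may survive); moreover its very existence already presupposes $C(K)<\infty$. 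The argument that actually works is shorter and bypasses potentials entirely: (1)~strict positive definiteness forces $C(K)<\infty$ for every compact $K$ --- otherwise a minimizing sequence of unit measures on $K$ with energies tending to $0$ would have, by vague compactness of $\mathfrak M^1(K)$ and vague lower semicontinuity of the energy, a vague cluster point that is a nonzero measure of zero energy; (2)~the definition of interior capacity gives $\lambda(\mathrm X)^2\leqslant C(K)\,\|\lambda\|^2$ for every $\lambda\in\mathcal E^+(K)$, whence $\nu(K)^2\leqslant C(K)\,\|\nu_K\|^2\leqslant C(K)\,\|\nu\|^2$ for $\nu\in\mathcal E^+$ (using $\kappa\geqslant0$ when $\mathrm X$ is noncompact; the compact case needs only the usual additive adjustment). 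This yields $\sup_\alpha\int\varphi\,d\nu_\alpha\leqslant(\sup\varphi)\sqrt{C(K)}\,\sup_\alpha\|\nu_\alpha\|<\infty$, and the rest of your necessity argument then goes through. Note also that the ingredient doing the work here is strict positive definiteness, not consistency, contrary to what your last sentence suggests. As written, the proposal is an accurate skeleton of Fuglede's proof with its load-bearing lemma missing.
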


\begin{remark} In $\mathbb R^n$, $n\geqslant 3$, the
Newtonian kernel $|x-y|^{2-n}$ is perfect~\cite{Car}. So are the
Riesz kernels $|x-y|^{\alpha-n}$, $0<\alpha<n$, in~$\mathbb R^n$,
$n\geqslant2$~\cite{D1,D2}, and the restriction of the logarithmic
kernel $-\log\,|x-y|$ in $\mathbb R^2$ to the open unit
disk~\cite{L}. Furthermore, if $D$ is an open set in~$\mathbb R^n$,
$n\geqslant 2$, and its generalized Green function~$g_D$ exists
(see, e.g.,~\cite[Th.~5.24]{HK}), then the kernel $g_D$ is perfect
as well~\cite{E1}.\end{remark}

\begin{remark} As is seen from the above definitions and Theorem~\ref{th:1}, the concept of consistent
or perfect kernels is an efficient tool in minimal energy problems
over classes of {\em nonnegative scalar\/} Radon measures with
finite energy. Indeed, the theory of capacities of {\em sets\/} has
been developed in~\cite{F1} for exactly those kernels. We shall show
below that this concept is efficient, as well, in minimal energy
problems over classes of {\em vector measures\/} of finite or
infinite dimensions. This is guaranteed by a theorem on the strong
completeness of proper subspaces of the semimetric space~$\mathcal
E^+(\mathbf{A})$, to be stated in Sec.~\ref{sec:strong}.\end{remark}

\section{Condensers. Vector measures and their energies}

\subsection{Condensers of countably many plates. Associated vector measures}
Throughout the article, let $I^+$ and $I^-$ be fixed countable,
disjoint sets of indices, where the latter is allowed to be empty,
and let $I$ denote their union. Assume that to every $i\in I$ there
corresponds a (unique) nonempty, closed set~$A_i\subset\mathrm X$.

\begin{definition} A collection $\mathbf{A}=(A_i)_{i\in I}$ is
called an $(I^+,I^-)$-{\em condenser\/} (or simply a {\it
condenser\/}) in~$\mathrm X$ if every compact subset of~$\mathrm X$
intersects with at most finitely many~$A_i$ and
\begin{equation}
A_i\cap A_j=\varnothing\quad\text{for all \ } i\in I^+, \ j\in I^-.
\label{non}
\end{equation}\end{definition}

A condenser $\mathbf{A}$ is called {\em compact\/} if so are
all~$A_i$, $i\in I$, and {\em finite\/} if $I$ is finite. The sets
$A_i$, $i\in I^+$, and $A_j$, $j\in I^-$, are called the {\it
positive\/} and, respectively,  the {\em negative plates\/}
of~$\mathbf{A}$. (Note that any two equally sign\-ed plates can
intersect each other or even coincide.) In the sequel, also the
following notation will be used:
\begin{equation*}
A^+:=\bigcup_{i\in I^+}\,A_i,\qquad A^-:=\bigcup_{i\in I^-}\,A_i.
\end{equation*} Observe that $A^+$
and~$A^-$ might both be noncompact even for a compact~$\mathbf{A}$.

Given a condenser $\mathbf{A}$, let $\mathfrak M^+(\mathbf{A})$
consist of all nonnegative vector measures
$\boldsymbol{\mu}=(\mu^i)_{i\in I}$, where $\mu^i\in\mathfrak
M^+(A_i)$ for all $i\in I$; that is, $\mathfrak
M^+(\mathbf{A}):=\prod_{i\in I}\,\mathfrak M^+(A_i)$. The product
topology on~$\mathfrak M^+(\mathbf{A})$, where every $\mathfrak
M^+(A_i)$ is equipped with the vague topology, is likewise called
{\em vague\/}. Since the space $\mathfrak M(\mathrm X)$ is
Hausdorff, so is $\mathfrak M^+(\mathbf{A})$ (cf.~\cite[Chap.~3,
Th.~5]{K}).

A set $\mathfrak F\subset\mathfrak M^+(\mathbf{A})$ is {\em vaguely
bounded\/} if, for every $\varphi\in\mathrm C_0(\mathrm X)$ and
every $i\in I$,
\[\sup_{\mu\in\mathfrak F}\,|\mu^i(\varphi)|<\infty.\]

\begin{lemma}\label{lem:vaguecomp} If $\mathfrak F\subset\mathfrak
M^+(\mathbf{A})$ is vaguely bounded, then it is vaguely relatively
compact.\end{lemma}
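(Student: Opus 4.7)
The plan is to reduce this to the scalar version of the same statement together with Tychonoff's theorem, which is natural because $\mathfrak M^+(\mathbf{A})$ is by definition the Cartesian product $\prod_{i\in I}\mathfrak M^+(A_i)$ endowed with the product of the vague topologies.

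First I would recall the classical scalar fact (a consequence of the Banach--Alaoglu theorem applied to the dual of $\mathrm C_0(\mathrm X)$, or equivalently Bourbaki's criterion for vague relative compactness): any vaguely bounded subset of $\mathfrak M^+(\mathrm X)$ has compact vague closure. Restricted to measures supported by the closed set $A_i$, this gives that every vaguely bounded subset of $\mathfrak M^+(A_i)$ is vaguely relatively compact, since $\mathfrak M^+(A_i)$ is vaguely closed in $\mathfrak M^+(\mathrm X)$ (the condition $\mathrm{supp}\,\mu\subset A_i$ is preserved under vague limits).

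Next, for each $i\in I$ I would introduce the $i$-th coordinate projection $\pi_i\colon\mathfrak M^+(\mathbf{A})\to\mathfrak M^+(A_i)$, set $\mathfrak F_i:=\pi_i(\mathfrak F)$, and observe that the vague boundedness of $\mathfrak F$ in the product sense is exactly the statement that each $\mathfrak F_i$ is vaguely bounded in $\mathfrak M^+(A_i)$. By the preceding paragraph, the vague closure $\overline{\mathfrak F_i}$ is vaguely compact. Applying Tychonoff's theorem to the family $\{\overline{\mathfrak F_i}\}_{i\in I}$, the product $\prod_{i\in I}\overline{\mathfrak F_i}$ is compact in the product topology, which is precisely the vague topology on $\mathfrak M^+(\mathbf{A})$. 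Since $\mathfrak F\subset\prod_{i\in I}\overline{\mathfrak F_i}$, the set $\mathfrak F$ is contained in a vaguely compact set and is therefore vaguely relatively compact.

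There is no real obstacle here; the only point deserving attention is the interchange of closures and products, but because the product topology makes $\prod_i\overline{\mathfrak F_i}$ automatically closed (and hence contains the closure of $\mathfrak F$), one never actually needs an identity of the form $\overline{\prod_i\mathfrak F_i}=\prod_i\overline{\mathfrak F_i}$; the inclusion $\mathfrak F\subset\prod_i\overline{\mathfrak F_i}$ together with compactness of the right-hand side is all that is required. Since $\mathfrak M^+(\mathbf A)$ is Hausdorff (as noted just before the lemma), relative compactness in the sense of nets coincides with the statement that every net in $\mathfrak F$ has a vaguely convergent subnet, so the proof is complete.
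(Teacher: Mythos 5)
Your argument is correct and is essentially the paper's own proof: the author likewise cites Bourbaki's criterion that any vaguely bounded part of $\mathfrak M$ is vaguely relatively compact and then invokes Tychonoff's theorem for the product. Your extra care about $\mathfrak F\subset\prod_i\overline{\mathfrak F_i}$ being contained in a compact set is a fine (if implicit in the paper) way to finish.
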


\begin{proof} Since by~\cite[Chap.~III, \S~2, Prop.~9]{B2} any
vaguely bounded part of~$\mathfrak M$ is vaguely relatively compact,
the lemma follows from Tychonoff's theorem on the product of compact
spaces (see, e.g.,~\cite[Chap.~5, Th.~13]{K}).\end{proof}

\subsection{Mapping $R:\mathfrak M^+(\mathbf{A})\to\mathfrak M$.
Relation of $R$-equivalency on $\mathfrak
M^+(\mathbf{A})$}\label{sect:R}

Since each compact subset of~$\mathrm X$ intersects with at most
finitely many~$A_i$, for every $\varphi\in\mathrm C_0(\mathrm X)$
only a finite number of~$\mu^i(\varphi)$ (where
$\boldsymbol{\mu}\in\mathfrak M^+(\mathbf{A})$ is given) are
nonzero. This yields that to every vector measure
$\boldsymbol{\mu}\in\mathfrak M^+(\mathbf{A})$ there corresponds a
unique scalar Radon measure~$R\boldsymbol{\mu}\in\mathfrak M$ such
that
\[R\boldsymbol{\mu}(\varphi)=\sum_{i\in I}\,\alpha_i\mu^i(\varphi)\quad\text{for
all \ }\varphi\in\mathrm C_0(\mathrm X),\]  where
\[\alpha_i:=\left\{
\begin{array}{rll} +1 & \text{if} & i\in I^+,\\ -1 & \text{if} & i\in
I^-.\\ \end{array} \right.\] Then, because of~(\ref{non}), the
positive and negative parts in the Hahn--Jordan decomposition of
$R\boldsymbol{\mu}$ can respectively be written in the form
\[ R\boldsymbol{\mu}^+=\sum_{i\in I^+}\,\mu^i,\qquad R\boldsymbol{\mu}^-=\sum_{i\in
I^-}\,\mu^i.\]

Of course, the inclusion $R$ of $\mathfrak M^+(\mathbf{A})$ into
$\mathfrak M$, thus defined, is in general non-injective, i.e., one
may choose $\boldsymbol{\mu}_1,\boldsymbol{\mu}_2\in\mathfrak
M^+(\mathbf{A})$ so that $\boldsymbol{\mu}_1\ne\boldsymbol{\mu}_2$,
though $R\boldsymbol{\mu}_1=R\boldsymbol{\mu}_2$. We call
$\boldsymbol{\mu}_1,\boldsymbol{\mu}_2\in\mathfrak M^+(\mathbf{A})$
{\em $R$-equivalent\/} if $R\boldsymbol{\mu}_1=R\boldsymbol{\mu}_2$
--- or, which is equivalent, whenever $\sum_{i\in
I}\,\mu_1^i=\sum_{i\in I}\,\mu_2^i$.

Observe that the relation of $R$-equivalency implies that of
identity (and, hence, these two relations on~$\mathfrak
M^+(\mathbf{A})$ are actually equivalent) if and only if all~$A_i$,
$i\in I$, are mutually disjoint.

\begin{lemma}\label{lem:vague'}The vague convergence of
$(\boldsymbol{\mu}_s)_{s\in S}\subset\mathfrak M^+(\mathbf{A})$
to~$\boldsymbol{\mu}_0\in\mathfrak M^+(\mathbf{A})$ implies the
vague convergence of $(R\boldsymbol{\mu}_s)_{s\in S}$
to~$R\boldsymbol{\mu}_0$.\end{lemma}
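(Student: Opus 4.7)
The plan is to unfold the definition of vague convergence in the product space $\mathfrak M^+(\mathbf A)=\prod_{i\in I}\mathfrak M^+(A_i)$, write out $R\boldsymbol{\mu}_s(\varphi)$ as a sum of its components, and exploit the local finiteness of $\mathbf A$ to reduce matters to a finite sum whose convergence is immediate.

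First I would fix an arbitrary $\varphi\in\mathrm C_0(\mathrm X)$; by definition of vague convergence on $\mathfrak M$, what must be shown is that $R\boldsymbol{\mu}_s(\varphi)\to R\boldsymbol{\mu}_0(\varphi)$. Since $\mathbf A$ is an $(I^+,I^-)$-condenser, the compact set $\mathrm{supp}\,\varphi$ meets only finitely many $A_i$, so there exists a finite subset $J=J(\varphi)\subset I$ such that $\mu^i(\varphi)=0$ for every $i\in I\setminus J$ and every $\mu\in\mathfrak M^+(A_i)$. Consequently, for \emph{every} $s$ including the limit,
\[
R\boldsymbol{\mu}_s(\varphi)=\sum_{i\in I}\alpha_i\mu_s^i(\varphi)=\sum_{i\in J}\alpha_i\mu_s^i(\varphi),
\]
and the sum on the right is a \emph{finite} linear combination.

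Next I would invoke the definition of the product (vague) topology on $\mathfrak M^+(\mathbf A)$: vague convergence $\boldsymbol{\mu}_s\to\boldsymbol{\mu}_0$ means that for every $i\in I$ and every $\psi\in\mathrm C_0(\mathrm X)$ one has $\mu_s^i(\psi)\to\mu_0^i(\psi)$. Applying this to $\psi=\varphi$ for each of the finitely many indices $i\in J$, and taking the corresponding finite linear combination (which is a continuous operation on $\mathbb R^{|J|}$), one gets
\[
\sum_{i\in J}\alpha_i\mu_s^i(\varphi)\;\longrightarrow\;\sum_{i\in J}\alpha_i\mu_0^i(\varphi),
\]
that is, $R\boldsymbol{\mu}_s(\varphi)\to R\boldsymbol{\mu}_0(\varphi)$. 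Since $\varphi\in\mathrm C_0(\mathrm X)$ was arbitrary, this yields the claimed vague convergence $R\boldsymbol{\mu}_s\to R\boldsymbol{\mu}_0$ in $\mathfrak M$.

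There is no real obstacle here: the only point requiring care is that the sum defining $R\boldsymbol{\mu}$ is \emph{a priori} infinite, and termwise limits of infinite sums need not commute with summation. Local finiteness of $\mathbf A$, which is built into the very definition of a condenser, is precisely what collapses the sum to a finite one uniformly in $s$, so the argument reduces to the trivial fact that finite sums commute with limits.
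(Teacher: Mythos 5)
Your proof is correct and is exactly the argument the paper intends: its one-line proof appeals to the same fact that $\mathrm{supp}\,\varphi$ meets only finitely many $A_i$, which reduces $R\boldsymbol{\mu}_s(\varphi)$ to a finite sum that converges termwise by the definition of the product topology. You have simply written out in full the details the paper declares obvious.
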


\begin{proof} This is obvious in view of the fact that the
support of any $\varphi\in\mathrm C_0(\mathrm X)$ can have points in
common with only finitely many~$A_i$.\end{proof}

\begin{remark} Lemma~\ref{lem:vague'} in general can not be
inverted. However, if all the $A_i$ are mutually disjoint, then the
vague convergence of $(R\boldsymbol{\mu}_s)_{s\in S}$
to~$R\boldsymbol{\mu}_0$ implies the vague convergence of
$(\boldsymbol{\mu}_s)_{s\in S}$ to~$\boldsymbol{\mu}_0$, which is
seen by using the Tietze--Urysohn extension theorem
\cite[Th.~0.2.13]{E2}.\end{remark}

\subsection{How the energies $\kappa(\boldsymbol{\mu},\boldsymbol{\mu})$ and
$\kappa(R\boldsymbol{\mu},R\boldsymbol{\mu})$ are related to each
other?}\label{convexx} In accordance with an electrostatic
interpretation of a condenser~$\mathbf{A}$, we assume that the law
of interaction between the charges lying on the plates~$A_i$, $i\in
I$, is determined by the matrix $(\alpha_i\alpha_j)_{i,j\in I}$.
Then the {\em mutual energy\/} of
$\boldsymbol{\mu},\boldsymbol{\mu}_1\in\mathfrak M^+(\mathbf{A})$ is
given by the expression\footnote{It will be shown below
(see~Corollary~\ref{muten}) that the mutual energy is well defined
and finite (hence, the series in~(\ref{vectormutualenergy})
converges absolutely) at least for all measures from $\mathcal
E^+(\mathbf{A})$.}
\begin{equation}\label{vectormutualenergy}\kappa(\boldsymbol{\mu},\boldsymbol{\mu}_1):=\sum_{i,j\in
I}\,\alpha_i\alpha_j\kappa(\mu^i,\mu_1^j).\end{equation} For
$\boldsymbol{\mu}=\boldsymbol{\mu}_1$ the mutual energy defines the
{\em energy\/} $\kappa(\boldsymbol{\mu},\boldsymbol{\mu})$
of~$\boldsymbol{\mu}$. Let $\mathcal E^+(\mathbf{A})$ consist of all
$\boldsymbol{\mu}\in\mathfrak M^+(\mathbf{A})$ with
$-\infty<\kappa(\boldsymbol{\mu},\boldsymbol{\mu})<\infty$.

\begin{lemma}\label{enfinite}  For $\boldsymbol{\mu}\in\mathfrak M^+(\mathbf{A})$ to
have finite energy, it is necessary and sufficient that
$\mu^i\in\mathcal E$ for all $i\in I$ and $\sum_{i\in
I}\,\|\mu^i\|^2<\infty$.\end{lemma}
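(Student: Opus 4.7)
The strategy is to identify $\kappa(\boldsymbol\mu,\boldsymbol\mu)$ with the energy of the scalar Radon measure $R\boldsymbol\mu$ and read off both directions from the Riemann-series convention announced in the footnote. The positive Radon measures $R\boldsymbol\mu^\pm=\sum_{i\in I^\pm}\mu^i$ are well defined by the local finiteness of $\mathbf A$; since $\kappa\geqslant 0$ in the noncompact case (while compactness of $\mathrm X$ forces $I$ to be finite, reducing matters to a direct Cauchy--Schwarz argument), Tonelli's theorem yields
\[
\kappa(R\boldsymbol\mu^\epsilon,R\boldsymbol\mu^{\epsilon'})=\sum_{i\in I^\epsilon,\,j\in I^{\epsilon'}}\kappa(\mu^i,\mu^j)\in[0,+\infty]\qquad(\epsilon,\epsilon'\in\{+,-\}).
\]
Hence the doubly indexed series defining $\kappa(\boldsymbol\mu,\boldsymbol\mu)$ converges absolutely if and only if the nonnegative Radon measure $|R\boldsymbol\mu|=\sum_{i\in I}\mu^i$ has finite scalar energy.

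For necessity, finite $\kappa(\boldsymbol\mu,\boldsymbol\mu)$ forces, via the Riemann series theorem as invoked in the footnote, absolute convergence of $\sum_{i,j}\alpha_i\alpha_j\kappa(\mu^i,\mu^j)$. Restricting to the diagonal $i=j$ singles out the nonnegative subseries $\sum_i\|\mu^i\|^2$, which must therefore be finite; in particular each $\|\mu^i\|^2<\infty$, so $\mu^i\in\mathcal E$.

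For sufficiency, assume $\mu^i\in\mathcal E$ for all $i\in I$ and $\sum_i\|\mu^i\|^2<\infty$. The pre-Hilbert Cauchy--Schwarz inequality $|\kappa(\mu^i,\mu^j)|\leqslant\|\mu^i\|\|\mu^j\|$ controls individual cross terms, and I would assemble them by a partial-sum / strong-Cauchy argument: for each finite $J\subset I$ the truncation $\boldsymbol\mu_J$ (components $\mu^i$ for $i\in J$, zero otherwise) lies in $\mathcal E^+(\mathbf A)$, and one would check that, as $J$ exhausts $I$, the scalar measures $R\boldsymbol\mu_J$ form a strong Cauchy net in $\mathcal E$ with vague limit $R\boldsymbol\mu$. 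Fuglede's consistency (Sec.~\ref{sec:2}) would then upgrade this to $R\boldsymbol\mu\in\mathcal E$, which by the first paragraph is equivalent to $\kappa(\boldsymbol\mu,\boldsymbol\mu)<\infty$. The main obstacle will be verifying the Cauchy property from square-summability alone, since the naive bound $\sum_{i,j\in J}\|\mu^i\|\|\mu^j\|=(\sum_{i\in J}\|\mu^i\|)^2$ is too crude; the additional structure to exploit is the disjointness $A^+\cap A^-=\varnothing$ built into the condenser together with the nonnegativity of $\kappa(\mu^i,\mu^j)$ for equally signed pairs, which reduces the question to the finiteness of the two sums $\kappa(R\boldsymbol\mu^\pm,R\boldsymbol\mu^\pm)$ separately and isolates the genuinely subtle coupling only in the cross-sign term, handled via the Cauchy--Schwarz bound against the (now controlled) diagonal sums.
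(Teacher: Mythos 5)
Your reduction to the scalar measure $R\boldsymbol{\mu}$ and your necessity argument are correct: for noncompact $\mathrm X$ the kernel is nonnegative, the Tonelli-type identity gives $\sum_{i,j}|\kappa(\mu^i,\mu^j)|=\kappa\bigl(\sum_i\mu^i,\sum_j\mu^j\bigr)$, and extracting the diagonal of an absolutely convergent double series yields $\mu^i\in\mathcal E$ and $\sum_i\|\mu^i\|^2<\infty$; the compact case, where $I$ is automatically finite, is also disposed of correctly. Note, though, that your first paragraph essentially re-derives Lemma~\ref{integral} and Corollary~\ref{unknown}, which in the paper come \emph{after} Lemma~\ref{enfinite}; the paper's own proof is far more elementary, consisting solely of the termwise bound $2\kappa(\mu^i,\mu^j)\leqslant\|\mu^i\|^2+\|\mu^j\|^2$ applied to the double series, with no appeal to $R$, to Tonelli, or to any topology.

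The sufficiency half of your proposal, however, is not a proof but a plan with a hole that you flag and never close, and the hole is genuine. The consistency detour cannot help: consistency is not a hypothesis of Lemma~\ref{enfinite} (it becomes a standing assumption only in Sec.~\ref{sec:standing}), and in any case you could conclude $R\boldsymbol{\mu}_J\to R\boldsymbol{\mu}$ strongly only \emph{after} showing the truncations form a strong Cauchy net, i.e.\ after establishing exactly the quantitative estimate $\sum_{i,j\in J'\setminus J}\kappa(\mu^i,\mu^j)\to0$ that you admit you cannot verify. Nor does the sign structure supply it: disjointness of $A^+$ and $A^-$ together with nonnegativity of equally signed terms merely reduces the question to the finiteness of the same-sign blocks $\sum_{i,j\in I^{\pm}}\kappa(\mu^i,\mu^j)$, which is precisely where the difficulty sits, since $\sum_i\|\mu^i\|^2<\infty$ controls only the diagonal. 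Indeed, for a Riesz kernel $|x-y|^{\alpha-n}$ with $n-\alpha<1$, take $I=I^+=\mathbb N$, let $A_i$ be the closed unit ball centred at $(3i,0,\dots,0)$, and let $\mu^i$ be $i^{-\beta}$ times the equilibrium measure of $A_i$ with $\tfrac12<\beta\leqslant1-\tfrac{n-\alpha}{2}$; then $\sum_i\|\mu^i\|^2<\infty$ while $\sum_{i\ne j}\kappa(\mu^i,\mu^j)\geqslant\sum_{i\ne j}i^{-\beta}j^{-\beta}\bigl(3|i-j|+2\bigr)^{\alpha-n}=\infty$. So no rearrangement of Cauchy--Schwarz bounds will manufacture the missing control from square-summability alone. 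Your unease is in fact well founded even against the paper's argument: summing the inequality $2\kappa(\mu^i,\mu^j)\leqslant\|\mu^i\|^2+\|\mu^j\|^2$ over all $i,j\in I$ produces the majorant $\tfrac12\sum_{i,j}\bigl(\|\mu^i\|^2+\|\mu^j\|^2\bigr)$, which diverges whenever $I$ is infinite, so that argument settles the sufficiency immediately only for finite $I$; in the infinite-dimensional noncompact setting the off-diagonal part requires an input beyond the stated hypotheses.
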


\begin{proof} This follows  immediately from the above definitions due to the inequality
$2\kappa(\nu_1,\nu_2)\leqslant\|\nu_1\|^2+\|\nu_2\|^2$ for
$\nu_1,\,\nu_2\in\mathcal E$.\end{proof}

In view of the convexity of $\mathfrak M^+(\mathbf{A})$,
Lemma~\ref{enfinite} yields that also $\mathcal E^+(\mathbf{A})$
forms a {\em convex cone\/}.

In order to establish relations between the mutual energies of
vector measures and those of their (scalar) $R$-images, we need the
following two lemmas, the former being well known (see,
e.g.,~\cite{F1}). In both, $\mathrm Y$ is a locally compact
Hausdorff space.

\begin{lemma}\label{lemma:lower}If\/
$\psi\in\mathrm\Phi(\mathrm Y)$ is given, then the map
$\nu\mapsto\langle\psi,\nu\rangle$ is vaguely lower semicontinuous
on $\mathfrak M^+(\mathrm Y)$.\end{lemma}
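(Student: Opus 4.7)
The plan is to realise $\psi$ as the pointwise supremum of an upward-directed family of continuous functions of compact support; this reduces the lemma to the elementary fact that a supremum of continuous functions is lower semicontinuous. I split the argument according to whether $\mathrm Y$ is compact.

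First suppose $\mathrm Y$ is noncompact, so $\psi\geqslant0$ on $\mathrm Y$ by the definition of $\mathrm\Phi(\mathrm Y)$. A standard representation theorem for nonnegative lower semicontinuous functions on a locally compact space (Bourbaki, Integration, Ch.~IV; cf.~also \cite{F1}) produces an upward-directed family $(\varphi_\alpha)_{\alpha\in A}\subset\mathrm C_0(\mathrm Y)$ with $0\leqslant\varphi_\alpha\leqslant\psi$ for every $\alpha$ and $\psi=\sup_\alpha\varphi_\alpha$ pointwise on $\mathrm Y$. For any $\nu\in\mathfrak M^+(\mathrm Y)$, monotone convergence for upward-directed families of nonnegative continuous functions then yields $\langle\psi,\nu\rangle=\sup_\alpha\langle\varphi_\alpha,\nu\rangle$. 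Each evaluation $\nu\mapsto\langle\varphi_\alpha,\nu\rangle$ is vaguely continuous on $\mathfrak M^+(\mathrm Y)$ by the very definition of the vague topology, and a pointwise supremum of a family of continuous functions is lower semicontinuous; hence $\nu\mapsto\langle\psi,\nu\rangle$ is vaguely lsc.

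When $\mathrm Y$ is compact, $\psi$ is bounded below (being lsc on a compact set), so $\psi+c\geqslant0$ on $\mathrm Y$ for some constant $c\geqslant 0$. Applying the previous step to $\psi+c$, and noting that $\mathrm C_0(\mathrm Y)=\mathrm C(\mathrm Y)$ in this case, we find that $\nu\mapsto\langle\psi+c,\nu\rangle$ is vaguely lsc on $\mathfrak M^+(\mathrm Y)$. Since $1\in\mathrm C_0(\mathrm Y)$ here, the map $\nu\mapsto c\nu(\mathrm Y)$ is vaguely continuous, and subtracting a continuous function from an lsc one preserves lower semicontinuity; thus $\nu\mapsto\langle\psi,\nu\rangle$ is again vaguely lsc.

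The only delicate point is the monotone convergence step in the second paragraph: since $\mathfrak M^+(\mathrm Y)$ with the vague topology need not be first countable, one cannot in general restrict to monotone \emph{sequences} of approximants $\varphi_n$. The identity $\int\psi\,d\nu=\sup_\alpha\int\varphi_\alpha\,d\nu$ for upward-directed families of nonnegative continuous functions is, however, a standard Daniell-type property of Radon integrals, so this causes no real obstacle.
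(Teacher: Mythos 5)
Your argument is correct and is precisely the standard proof: represent the integral of a nonnegative lower semicontinuous function as the supremum of $\nu(\varphi)$ over the upward-directed family of $\varphi\in\mathrm C_0(\mathrm Y)$ with $0\leqslant\varphi\leqslant\psi$, and reduce the compact case by adding a constant. The paper itself offers no proof, labelling the lemma as well known and citing Fuglede, and the argument it has in mind is exactly the one you give.
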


In particular, this implies that the potential $\kappa(\cdot,\nu)$
of any $\nu\in\mathfrak M^+(\mathrm X)$ belongs
to~$\mathrm\Phi(\mathrm X)$.

\begin{lemma}\label{integral} Consider an $(L^+,L^-)$-condenser $\mathbf B=(B_\ell)_{\ell\in L}$
in $\mathrm Y$, a vector measure
$\boldsymbol{\omega}=(\omega^\ell)_{\ell\in L}\in\mathfrak
M^+(\mathbf{B})$, and a function $\psi\in\mathrm\Phi(\mathrm Y)$.
For $\langle\psi,R\boldsymbol{\omega}\rangle$ to be finite, it is
necessary and sufficient that $\sum_{\ell\in
L}\,\alpha_\ell\langle\psi,\omega^\ell\rangle$ converge absolutely,
and then
\begin{equation*}
\langle\psi,R\boldsymbol{\omega}\rangle=\sum_{\ell\in
L}\,\alpha_\ell\langle\psi,\mu^\ell\rangle.\label{lemma11}
\end{equation*}
\end{lemma}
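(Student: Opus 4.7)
The plan is to decompose $R\boldsymbol{\omega}$ via its Hahn--Jordan parts and reduce the lemma to the standard measure-theoretic fact that integration against a nonnegative integrand commutes with countable sums of positive measures. I would first dispose of the compact case in the definition of $\mathrm{\Phi}(\mathrm{Y})$: if $\mathrm{Y}$ is compact, then applying the local finiteness condition of a condenser with $K=\mathrm{Y}$ forces $L$ to be finite, so $R\boldsymbol{\omega}=\sum_{\ell\in L}\alpha_\ell\omega^\ell$ is a finite signed linear combination of finite Radon measures; both assertions then follow from the linearity of the integral over the finite set~$L$, absolute convergence of this finite sum amounting to the finiteness of each of its summands.

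In the remaining (and main) case, $\mathrm{Y}$ is non-compact and $\psi\geq 0$ by definition of $\mathrm{\Phi}(\mathrm{Y})$, so each $\langle\psi,\omega^\ell\rangle\in[0,\infty]$. The core identity I would establish is
$$\langle\psi,R\boldsymbol{\omega}^\pm\rangle=\sum_{i\in L^\pm}\langle\psi,\omega^i\rangle\in[0,\infty],$$
using the Hahn--Jordan description $R\boldsymbol{\omega}^\pm=\sum_{i\in L^\pm}\omega^i$ recalled in Sec.~\ref{sect:R}. This is the standard Tonelli-type formula for integrating a nonnegative Borel integrand against a countable sum of positive measures; it can either be invoked after extending the Radon measures to Borel measures, or derived within the Bourbaki framework by observing that for every $\varphi\in\mathrm{C}_0(\mathrm{Y})$ with $0\leq\varphi\leq\psi$ the support of $\varphi$ meets only finitely many plates $B_\ell$, so $\langle\varphi,R\boldsymbol{\omega}^\pm\rangle=\sum_{i\in L^\pm}\langle\varphi,\omega^i\rangle$ is trivially a finite sum, and then taking the supremum over such $\varphi$ on both sides (using $\varphi:=\max_{j\in J}\varphi_j$ for finite $J\subset L^\pm$ to treat finitely many indices jointly) yields the identity.

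Once the core identity is in hand, the lemma follows immediately. By definition of the signed integral, $\langle\psi,R\boldsymbol{\omega}\rangle$ is finite if and only if both $\langle\psi,R\boldsymbol{\omega}^+\rangle$ and $\langle\psi,R\boldsymbol{\omega}^-\rangle$ are finite; since every $\langle\psi,\omega^\ell\rangle$ is nonnegative, this is in turn equivalent to
$$\sum_{\ell\in L}|\alpha_\ell\langle\psi,\omega^\ell\rangle|=\sum_{\ell\in L}\langle\psi,\omega^\ell\rangle<\infty,$$
that is, to the absolute convergence of $\sum_\ell\alpha_\ell\langle\psi,\omega^\ell\rangle$. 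Under that condition,
$$\langle\psi,R\boldsymbol{\omega}\rangle=\langle\psi,R\boldsymbol{\omega}^+\rangle-\langle\psi,R\boldsymbol{\omega}^-\rangle=\sum_{i\in L^+}\langle\psi,\omega^i\rangle-\sum_{i\in L^-}\langle\psi,\omega^i\rangle=\sum_{\ell\in L}\alpha_\ell\langle\psi,\omega^\ell\rangle.$$
The only genuine obstacle is the careful verification of the core identity for a general lower semicontinuous integrand $\psi$---a routine but non-trivial swap of the supremum in the Bourbaki definition of the integral with the infinite sum over~$L^\pm$, which is precisely where one uses that $\mathrm{C}_0(\mathrm{Y})$-approximants of $\psi$ can be chosen compatibly on arbitrary finite batches of indices.
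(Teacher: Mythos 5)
Your proposal is correct and follows essentially the same route as the paper: reduce to $\psi\geqslant0$, use the Hahn--Jordan description $R\boldsymbol{\omega}^\pm=\sum_{\ell\in L^\pm}\omega^\ell$, and establish the key identity $\langle\psi,R\boldsymbol{\omega}^\pm\rangle=\sum_{\ell\in L^\pm}\langle\psi,\omega^\ell\rangle$ by a monotone approximation argument, from which the equivalence with absolute convergence is immediate. The only (inessential) difference is that the paper derives the key identity from the vague lower semicontinuity of $\nu\mapsto\langle\psi,\nu\rangle$ (Lemma~\ref{lemma:lower}) applied to the vaguely convergent partial sums, whereas you unwind that lemma into the underlying supremum over $\mathrm C_0$-minorants; your treatment of the compact case (finite $L$, finite sums) is if anything cleaner than the paper's device of adding a constant to~$\psi$.
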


\begin{proof} We can assume $\psi$ to be nonnegative,
for if not, we replace $\psi$ by a function~$\psi'\geqslant0$
obtained by adding to~$\psi$ a suitable constant~$c>0$, which is
always possible since a lower semicontinuous function is bounded
from below on a compact space. Hence,
\[\langle\psi,R\boldsymbol{\omega}^+\rangle\geqslant\sum_{\ell\in L^+, \ \ell\leqslant N}\,\langle
\psi,\omega^\ell\rangle\quad\text{for all \ }N\in L^+.\] On the
other hand, the sum of $\omega^\ell$ over all $\ell\in L^+$ that do
not exceed~$N$ approaches~$R\boldsymbol{\omega}^+$ vaguely as
$N\to\infty$; consequently, by Lemma~\ref{lemma:lower},
\begin{equation*}
\langle\psi,R\boldsymbol{\omega}^+\rangle\leqslant\lim_{N\to\infty}\,\sum_{\ell\in
L^+, \ \ell\leqslant
N}\,\langle\psi,\omega^\ell\rangle.\end{equation*} Combining the
last two inequalities and then letting $N\to\infty$, we get
\[\langle\psi,R\boldsymbol{\omega}^+\rangle=\sum_{\ell\in L^+}\,\langle\psi,\omega^\ell\rangle.\]
Since the same holds true for $R\boldsymbol{\omega}^-$ and~$L^-$
instead of $R\boldsymbol{\omega}^+$ and~$L^+$, the lemma
follows.\end{proof}

To apply Lemma~\ref{integral} to the condenser $\mathbf
A\times\mathbf A:=(A_i\times A_j)_{(i,j)\in I\times I}$ in $\mathrm
X\times\mathrm X$ with $\alpha_{(i,j)}:=\alpha_i\alpha_j$, we
observe that any $\boldsymbol{\omega}\in\mathfrak M^+(\mathbf
A\times\mathbf A)$ can be written as
$\boldsymbol{\mu}\otimes\boldsymbol{\mu}_1:=(\mu^i\otimes\mu_1^j)_{(i,j)\in
I\times I}$, where $\boldsymbol{\mu},\boldsymbol{\mu}_1\in\mathfrak
M^+(\mathbf A)$. Therefore,
\[R(\boldsymbol{\mu}\otimes\boldsymbol{\mu}_1)=\sum_{i,j\in
I}\,\alpha_i\alpha_j\mu^i\otimes\mu_1^j=R\boldsymbol{\mu}\otimes
R\boldsymbol{\mu}_1.\] If, moreover, $\psi=\kappa\in\Phi(\mathrm
X\times\mathrm X)$, then we arrive at the following assertion.

\begin{corollary}\label{unknown}Given $\boldsymbol{\mu},\boldsymbol{\mu}_1\in\mathfrak M^+(\mathbf
A)$, we $\kappa(\boldsymbol{\mu},\boldsymbol{\mu}_1)=
\kappa(R\boldsymbol{\mu},R\boldsymbol{\mu}_1)$, the identity being
understood in the sense that each of its sides is finite whenever so
is the other and then they coincide.
\end{corollary}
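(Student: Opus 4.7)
The plan is to deduce the corollary almost immediately from Lemma~\ref{integral}, applied to the product condenser and the tensor product measure, since all the necessary ingredients have already been assembled in the paragraph preceding the statement. The key identification, emphasized by the authors just before the corollary, is that any $\boldsymbol{\omega}\in\mathfrak M^+(\mathbf A\times\mathbf A)$ of the form $\boldsymbol{\mu}\otimes\boldsymbol{\mu}_1$ satisfies $R(\boldsymbol{\mu}\otimes\boldsymbol{\mu}_1)=R\boldsymbol{\mu}\otimes R\boldsymbol{\mu}_1$, so that evaluating $\langle\kappa,R(\boldsymbol{\mu}\otimes\boldsymbol{\mu}_1)\rangle$ produces exactly the mutual scalar energy $\kappa(R\boldsymbol{\mu},R\boldsymbol{\mu}_1)$.

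First I would verify that $\mathbf A\times\mathbf A=(A_i\times A_j)_{(i,j)\in I\times I}$ is indeed an $(L^+,L^-)$-condenser in $\mathrm X\times\mathrm X$, with the sign of the plate $A_i\times A_j$ set to $\alpha_{(i,j)}:=\alpha_i\alpha_j$. For the local finiteness condition, any compact $K\subset\mathrm X\times\mathrm X$ has compact projections onto the two factors, each of which meets only finitely many $A_i$, so $K$ meets only finitely many product plates. For the sign-disjointness condition~(\ref{non}), if $\alpha_i\alpha_j\ne\alpha_{i'}\alpha_{j'}$ then necessarily $\alpha_i\ne\alpha_{i'}$ or $\alpha_j\ne\alpha_{j'}$, so by the assumption on $\mathbf A$ at least one of $A_i\cap A_{i'}$ and $A_j\cap A_{j'}$ is empty, forcing $(A_i\times A_j)\cap(A_{i'}\times A_{j'})=\varnothing$.

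Once this is established, I would apply Lemma~\ref{integral} with $\mathrm Y=\mathrm X\times\mathrm X$, $\mathbf B=\mathbf A\times\mathbf A$, $\psi=\kappa\in\mathrm\Phi(\mathrm X\times\mathrm X)$, and $\boldsymbol{\omega}=\boldsymbol{\mu}\otimes\boldsymbol{\mu}_1$. The lemma asserts that $\langle\kappa,R(\boldsymbol{\mu}\otimes\boldsymbol{\mu}_1)\rangle$ is finite if and only if the double series $\sum_{(i,j)\in I\times I}\,\alpha_i\alpha_j\,\langle\kappa,\mu^i\otimes\mu_1^j\rangle$ converges absolutely, and in that case the two quantities coincide. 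By definition of the mutual scalar energy and of the tensor product, the left-hand side is $\kappa(R\boldsymbol{\mu},R\boldsymbol{\mu}_1)$, while the general term of the series is $\alpha_i\alpha_j\kappa(\mu^i,\mu_1^j)$, whose (absolutely convergent) sum is by definition~(\ref{vectormutualenergy}) the vector mutual energy $\kappa(\boldsymbol{\mu},\boldsymbol{\mu}_1)$. This delivers both the finiteness equivalence and the numerical identity.

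The only genuinely delicate point is the verification that $\mathbf A\times\mathbf A$ is a condenser; everything else is a direct translation of definitions together with the identification $R(\boldsymbol{\mu}\otimes\boldsymbol{\mu}_1)=R\boldsymbol{\mu}\otimes R\boldsymbol{\mu}_1$ recorded just before the statement. It is worth remarking in passing that this corollary in particular validates the footnote to~(\ref{vectormutualenergy}): whenever $\boldsymbol{\mu},\boldsymbol{\mu}_1\in\mathcal E^+(\mathbf A)$, so that by Lemma~\ref{enfinite} each $\mu^i,\mu_1^j\in\mathcal E$, the scalar mutual energy $\kappa(R\boldsymbol{\mu},R\boldsymbol{\mu}_1)$ is finite (as $R\boldsymbol{\mu}^{\pm},R\boldsymbol{\mu}_1^{\pm}\in\mathcal E$ by the same lemma together with the Cauchy--Schwarz inequality), so the series in~(\ref{vectormutualenergy}) automatically converges absolutely.
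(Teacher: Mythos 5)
Your proof is correct and follows exactly the route the paper itself takes: the corollary is read off from Lemma~\ref{integral} applied to the product condenser $\mathbf A\times\mathbf A$ with $\alpha_{(i,j)}=\alpha_i\alpha_j$, $\psi=\kappa$, and $\boldsymbol{\omega}=\boldsymbol{\mu}\otimes\boldsymbol{\mu}_1$, via the identification $R(\boldsymbol{\mu}\otimes\boldsymbol{\mu}_1)=R\boldsymbol{\mu}\otimes R\boldsymbol{\mu}_1$. Your explicit verification that $\mathbf A\times\mathbf A$ is indeed an $(L^+,L^-)$-condenser (local finiteness via projections, sign-disjointness via~(\ref{non})) is a detail the paper leaves implicit, and it is correctly carried out.
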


Hence, $\boldsymbol{\mu}\in\mathfrak M^+(\mathbf{A})$ belongs to
$\mathcal E^+(\mathbf A)$ if and only if
$R\boldsymbol{\mu}\in\mathcal E$ and, furthermore,
\begin{equation}\label{Ren}
\kappa(\boldsymbol{\mu},\boldsymbol{\mu})=\kappa(R\boldsymbol{\mu},R\boldsymbol{\mu})\quad\text{for
all \ }\boldsymbol{\mu}\in\mathcal E^+(\mathbf A).
\end{equation}
In view of the positive definiteness of the kernel, this yields the
following property of positivity of the energy
$\kappa(\boldsymbol{\mu},\boldsymbol{\mu})$, which was not obvious a
priori.

\begin{corollary}\label{posen}For all $\boldsymbol{\mu}\in\mathcal E^+(\mathbf
A)$, it is true that
$\kappa(\boldsymbol{\mu},\boldsymbol{\mu})\geqslant0$.\end{corollary}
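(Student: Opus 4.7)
The plan is to reduce the positivity of the vector energy to the scalar energy of the associated $R$-image, exploiting the isometry-like identity~(\ref{Ren}) that has just been derived.

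First, I would fix $\boldsymbol{\mu}\in\mathcal E^+(\mathbf A)$. By the paragraph preceding the corollary (i.e., the consequence of Corollary~\ref{unknown}), the assumption $\boldsymbol{\mu}\in\mathcal E^+(\mathbf A)$ is equivalent to $R\boldsymbol{\mu}\in\mathcal E$, and moreover the identity $\kappa(\boldsymbol{\mu},\boldsymbol{\mu})=\kappa(R\boldsymbol{\mu},R\boldsymbol{\mu})$ holds. Since $R\boldsymbol{\mu}$ is an element of $\mathfrak M$ (a signed scalar Radon measure) lying in the pre-Hilbert space $\mathcal E$, the hypothesis that $\kappa$ is positive definite directly yields $\kappa(R\boldsymbol{\mu},R\boldsymbol{\mu})\geqslant 0$. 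Combining these two facts gives the claim.

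There is essentially no obstacle here: all the work has been done in Lemma~\ref{integral} and Corollary~\ref{unknown}. The only thing worth being careful about is that positive definiteness was defined for arbitrary $\nu\in\mathfrak M$ (with the convention that the energy is nonnegative \emph{whenever defined}), so one must explicitly invoke $R\boldsymbol{\mu}\in\mathcal E$ in order to be sure that $\kappa(R\boldsymbol{\mu},R\boldsymbol{\mu})$ is a well-defined finite real number and hence genuinely nonnegative, rather than an undefined expression of the form $\infty-\infty$. Once this is noted, the proof is a one-line consequence of~(\ref{Ren}).
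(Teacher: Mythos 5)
Your proposal is correct and follows exactly the paper's own argument: the identity $\kappa(\boldsymbol{\mu},\boldsymbol{\mu})=\kappa(R\boldsymbol{\mu},R\boldsymbol{\mu})$ from~(\ref{Ren}) (a consequence of Corollary~\ref{unknown}) together with the positive definiteness of~$\kappa$ applied to the scalar measure $R\boldsymbol{\mu}\in\mathcal E$. Your remark about verifying $R\boldsymbol{\mu}\in\mathcal E$ so that the scalar energy is a well-defined finite number is a sensible point of care, already implicit in the paper's statement that $\boldsymbol{\mu}\in\mathcal E^+(\mathbf A)$ if and only if $R\boldsymbol{\mu}\in\mathcal E$.
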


\begin{corollary}\label{muten}For any $\boldsymbol{\mu},\boldsymbol{\mu}_1\in\mathcal E^+(\mathbf
A)$, we have
\begin{equation}\label{vecen}
\kappa(\boldsymbol{\mu},\boldsymbol{\mu}_1)=\kappa(R\boldsymbol{\mu},R\boldsymbol{\mu}_1)=\sum_{i,j\in
I}\,\alpha_i\alpha_j\kappa(\mu^i,\mu_1^j),
\end{equation}
and the series here converges absolutely.
\end{corollary}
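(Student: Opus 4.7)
The plan is to reduce everything to the scalar setting via the mapping $R$, so that the machinery already built up for positive definite kernels on $\mathcal{E}$ does the work. The three key inputs are: the already-established equivalence between $\boldsymbol{\mu}\in\mathcal{E}^+(\mathbf{A})$ and $R\boldsymbol{\mu}\in\mathcal{E}$ (from the paragraph containing~\eqref{Ren}); the identity in Corollary~\ref{unknown}; and the Cauchy--Schwarz inequality in the pre-Hilbert space~$\mathcal{E}$.

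First I would observe that, by the remark immediately following Corollary~\ref{unknown}, $\boldsymbol{\mu},\boldsymbol{\mu}_1\in\mathcal{E}^+(\mathbf{A})$ implies $R\boldsymbol{\mu},R\boldsymbol{\mu}_1\in\mathcal{E}$. Since $\kappa$ is positive definite, $\mathcal{E}$ is a pre-Hilbert space, so Cauchy--Schwarz gives
\[
|\kappa(R\boldsymbol{\mu},R\boldsymbol{\mu}_1)|\leqslant\|R\boldsymbol{\mu}\|\,\|R\boldsymbol{\mu}_1\|<\infty,
\]
and hence the right-hand mutual energy is automatically finite. Next I would invoke Corollary~\ref{unknown} in the direction that, whenever $\kappa(R\boldsymbol{\mu},R\boldsymbol{\mu}_1)$ is finite, so is $\kappa(\boldsymbol{\mu},\boldsymbol{\mu}_1)$, and the two coincide. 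Since by definition~\eqref{vectormutualenergy} the vector mutual energy is the series $\sum_{i,j\in I}\alpha_i\alpha_j\kappa(\mu^i,\mu_1^j)$, this yields both equalities in~\eqref{vecen}. Absolute convergence then comes for free from the footnote convention on the sum notation together with Riemann's rearrangement theorem: a well-defined (i.e., order-independent) numerical series of real terms whose value is finite is necessarily absolutely convergent.

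There is essentially no real obstacle at this stage: the heavy analytic lifting --- interchanging integration of a lower semicontinuous $\psi\geqslant0$ with an infinite sum of nonnegative componentwise contributions, and then transporting this to the tensor-product condenser $\mathbf{A}\times\mathbf{A}$ to obtain the mutual-energy identity --- was already done in Lemma~\ref{integral} and Corollary~\ref{unknown}. The present corollary is the short bookkeeping step observing that the finite-energy hypothesis $\boldsymbol{\mu},\boldsymbol{\mu}_1\in\mathcal{E}^+(\mathbf{A})$, through Cauchy--Schwarz in $\mathcal{E}$, already supplies the finiteness condition that triggers the qualitative ``iff'' of that corollary and upgrades it to the quantitative statement~\eqref{vecen} with absolute convergence.
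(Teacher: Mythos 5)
Your proposal is correct and follows essentially the same route as the paper's own proof: deduce $R\boldsymbol{\mu},R\boldsymbol{\mu}_1\in\mathcal E$, conclude that $\kappa(R\boldsymbol{\mu},R\boldsymbol{\mu}_1)$ is finite from the pre-Hilbert structure of~$\mathcal E$, and then apply Corollary~\ref{unknown} to transfer finiteness and equality to the vector mutual energy, with absolute convergence following from the footnote convention via Riemann's theorem. You merely make explicit the Cauchy--Schwarz step and the rearrangement argument that the paper leaves implicit.
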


\begin{proof}For any $\boldsymbol{\mu},\boldsymbol{\mu}_1\in\mathcal E^+(\mathbf
A)$, we get $R\boldsymbol{\mu},R\boldsymbol{\mu}_1\in\mathcal E$;
hence, $\kappa(R\boldsymbol{\mu},R\boldsymbol{\mu}_1)$ is finite.
Therefore, repeated application of Corollary~\ref{unknown} gives the
desired conclusion.\end{proof}

\subsection{Semimetric space of vector measures of finite
energy}\label{sec:semimetric}

\begin{theorem}\label{lemma:semimetric} $\mathcal E^+(\mathbf{A})$ forms a semimetric space with
the semimetric \mbox{$\|\cdot\|_{\mathcal E^+(\mathbf{A})}$},
defined by~{\rm(\ref{vseminorm})}, and this space is isometric to
its $R$-image. The semimetric $\|\cdot\|_{\mathcal E^+(\mathbf{A})}$
is a metric if and only if the kernel $\kappa$ is strictly positive
definite while all $A_i$, $i\in I$, are mutually disjoint.
\end{theorem}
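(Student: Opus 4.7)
The plan is to reduce everything to the pre-Hilbert space~$\mathcal E$ via the mapping $R$ by establishing the single key identity
\[
\|\boldsymbol{\mu}_1-\boldsymbol{\mu}_2\|_{\mathcal E^+(\mathbf{A})}^2 \;=\; \|R\boldsymbol{\mu}_1-R\boldsymbol{\mu}_2\|_{\mathcal E}^2
\qquad\text{for all }\boldsymbol{\mu}_1,\boldsymbol{\mu}_2\in\mathcal E^+(\mathbf A).
\]
Once this is in hand, the semimetric properties (nonnegativity, symmetry, triangle inequality) transfer immediately from the seminorm $\|\cdot\|_\kappa$ on the pre-Hilbert space $\mathcal E$, and $R$ is by definition an isometry onto its image.

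To prove the identity, I would first note that by Lemma~\ref{enfinite} each component $\mu_r^i$ ($r=1,2$) lies in $\mathcal E$, so the signed measures $\mu_1^i-\mu_2^i$ belong to $\mathcal E$ and the bilinear form $\kappa(\cdot,\cdot)$ is defined on them. Expanding the definition~(\ref{vseminorm}) via bilinearity gives
\[
\sum_{i,j\in I}\alpha_i\alpha_j\bigl[\kappa(\mu_1^i,\mu_1^j)-\kappa(\mu_1^i,\mu_2^j)-\kappa(\mu_2^i,\mu_1^j)+\kappa(\mu_2^i,\mu_2^j)\bigr],
\]
where the rearrangement is justified because each of the four inner double series converges absolutely by Corollary~\ref{muten}. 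Applying Corollary~\ref{muten} to each of the four sums separately replaces them by $\kappa(R\boldsymbol{\mu}_1,R\boldsymbol{\mu}_1)$, $\kappa(R\boldsymbol{\mu}_1,R\boldsymbol{\mu}_2)$, $\kappa(R\boldsymbol{\mu}_2,R\boldsymbol{\mu}_1)$, $\kappa(R\boldsymbol{\mu}_2,R\boldsymbol{\mu}_2)$, and collecting them reproduces $\|R\boldsymbol{\mu}_1-R\boldsymbol{\mu}_2\|_\kappa^2$, which is nonnegative since $\kappa$ is positive definite. Thus $\|\boldsymbol{\mu}_1-\boldsymbol{\mu}_2\|_{\mathcal E^+(\mathbf A)}$ is well defined as a nonnegative real number, and the isometry identity holds.

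For the characterization of when the semimetric is a metric, I would reason as follows. Assume $\|\boldsymbol{\mu}_1-\boldsymbol{\mu}_2\|_{\mathcal E^+(\mathbf A)}=0$; by the isometry this is $\|R\boldsymbol{\mu}_1-R\boldsymbol{\mu}_2\|_\kappa=0$. If $\kappa$ is strictly positive definite, this forces $R\boldsymbol{\mu}_1=R\boldsymbol{\mu}_2$, i.e.\ $R$-equivalence of $\boldsymbol{\mu}_1$ and $\boldsymbol{\mu}_2$; and by the observation made in Section~\ref{sect:R} $R$-equivalence coincides with equality precisely when the sets $A_i$ are mutually disjoint. This gives the sufficiency.

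For the converse, I would establish the contrapositive by producing distinct vector measures of zero semimetric distance whenever either condition fails. If two plates $A_i$ and $A_j$ coincide or overlap, they must carry the same sign by~(\ref{non}); taking any nonzero $\mu\in\mathcal E^+(A_i\cap A_j)$ and placing it in slot $i$ versus slot $j$ yields distinct $\boldsymbol{\mu}_1,\boldsymbol{\mu}_2$ with $R\boldsymbol{\mu}_1=\alpha_i\mu=\alpha_j\mu=R\boldsymbol{\mu}_2$ and hence zero semimetric. If instead the $A_i$ are mutually disjoint but $\kappa$ fails to be strictly positive definite, one selects a nonzero $\nu\in\mathcal E$ of zero energy supported on $A^+\cup A^-$ and realizes $\nu$ as $R\boldsymbol{\mu}_1-R\boldsymbol{\mu}_2$ by spreading $\nu^+$ over the positive plates and $\nu^-$ over the negative ones. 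The main obstacle in the proof is to justify rigorously the termwise expansion and rearrangement of the double series defining~(\ref{vseminorm}); this is exactly where Lemma~\ref{enfinite} and Corollary~\ref{muten} are indispensable.
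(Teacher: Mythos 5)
Your treatment of the first two claims is exactly the paper's argument: apply Corollary~\ref{muten} to the four mutual energies $\kappa(R\boldsymbol{\mu}_k,R\boldsymbol{\mu}_t)$, $k,t=1,2$, conclude that the double series in~(\ref{vseminorm}) converges absolutely and equals $\|R\boldsymbol{\mu}_1-R\boldsymbol{\mu}_2\|_{\mathcal E}^2$, and then transfer the seminorm axioms from~$\mathcal E$ through the resulting isometry. The sufficiency half of the metric criterion (strict positive definiteness together with mutually disjoint plates forces zero distance to imply identity, via $R$-equivalence) also coincides with the reasoning the paper records after the theorem.

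Where you go beyond the paper is the necessity half, and there your argument has two concrete gaps. First, if $A_i\cap A_j\ne\varnothing$ for some $i\ne j$, a nonzero $\mu\in\mathcal E^+(A_i\cap A_j)$ need not exist: when the intersection has interior capacity zero (a single point, say, for the Newtonian kernel), every finite-energy measure concentrated on it vanishes, so the witness pair you describe cannot be built. Second, if $\kappa$ fails to be strictly positive definite, there is a nonzero $\nu\in\mathcal E$ with $\|\nu\|_{\mathcal E}=0$ somewhere in~$\mathrm X$, but nothing guarantees that such a $\nu$ can be taken concentrated on $A^+\cup A^-$ and split into finite-energy pieces on the individual plates; without that, the distance restricted to $\mathcal E^+(\mathbf A)$ may still separate points even though the seminorm on all of~$\mathcal E$ does not. (The paper's own proof does not address necessity at all --- it ends with ``Since $\|\cdot\|_{\mathcal E}$ is a seminorm on~$\mathcal E$, the theorem follows'' --- so these are gaps in the full justification of the stated equivalence rather than deviations from the printed argument.) To close them you would need to assume, or verify in the situation at hand, that overlapping plates overlap in a set of positive interior capacity and that the degeneracy of the kernel, if any, is witnessed by a measure living on the condenser.
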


\begin{proof}Fix $\boldsymbol{\mu}_1,\boldsymbol{\mu}_2\in\mathcal E^+(\mathbf{A})$.
Applying Corollary~\ref{muten} to
$\kappa(R\boldsymbol{\mu}_k,R\boldsymbol{\mu}_t)$, $k,t=1,2$, we get
\begin{equation*}\label{isometric}\|R\boldsymbol{\mu}_1-R\boldsymbol{\mu}_2\|_{\mathcal
E}^2=\sum_{i,j\in
I}\,\alpha_i\alpha_j\kappa(\mu^i_1-\mu^i_2,\mu^j_1-\mu^j_2),\end{equation*}
where the series converges absolutely. Compared
with~(\ref{vseminorm}), this relation yields
\begin{equation}\label{seminorm}\|\boldsymbol{\mu}_1-\boldsymbol{\mu}_2\|_{\mathcal E^+(\mathbf{A})}=
\|R\boldsymbol{\mu}_1-R\boldsymbol{\mu}_2\|_{\mathcal
E}.\end{equation} Since $\|\cdot\|_{\mathcal E}$ is a seminorm
on~$\mathcal E$, the theorem follows.\end{proof}

From now on, $\mathcal E^+(\mathbf{A})$ will always be treated as a
semimetric space with the semimetric $\|\cdot\|:=\|\cdot\|_{\mathcal
E^+(\mathbf{A})}$. Since $\mathcal E^+(\mathbf{A})$ and its
$R$-image are isometric, similar to the terminology in~$\mathcal E$
we shall call the topology on~$\mathcal E^+(\mathbf{A})$ {\it
strong\/}.

Two elements of~$\mathcal E^+(\mathbf{A})$, $\boldsymbol{\mu}_1$
and~$\boldsymbol{\mu}_2$, are said to be {\em equivalent in\/}
$\mathcal E^+(\mathbf{A})$ if
$\|\boldsymbol{\mu}_1-\boldsymbol{\mu}_2\|=0$. Observe that the
equivalence in~$\mathcal E^+(\mathbf{A})$ implies $R$-equivalence
(i.e., then $R\boldsymbol{\mu}_1=R\boldsymbol{\mu}_2$) provided the
kernel~$\kappa$ is strictly positive definite, and it implies the
identity (i.e., then $\boldsymbol{\mu}_1=\boldsymbol{\mu}_2$) if,
moreover, all~$A_i$, $i\in I$, are mutually disjoint.

\section{Constrained minimal\/ $\mathbf{f}$-weighted
energy problem}\label{sec:statement}
\subsection{Statement of the problem}

Consider an external field $\mathbf{f}=(f_i)_{i\in I}$ satisfying
Case~I or Case~II (see the Introduction), and assume each~$f_i$ to
affect the charges on~$A_i$ only. The $\mathbf{f}$-{\em weight\-ed
energy\/} $G_{\mathbf{f}}(\boldsymbol{\mu})$ of
$\boldsymbol{\mu}\in\mathfrak M^+(\mathbf{A})$ is defined
by~(\ref{wen}), and let $\mathcal E_{\mathbf f}^+(\mathbf{A})$
consist of all $\boldsymbol{\mu}\in\mathcal E^+(\mathbf{A})$ with
$-\infty<G_{\mathbf{f}}(\boldsymbol{\mu})<\infty$.

Also fix a nonnegative vector measure
$\boldsymbol{\sigma}\in\mathfrak M^+(\mathbf{A})$, called a {\it
constraint associated with\/}~$\mathbf A$, a numerical vector
$\mathbf{a}=(a_i)_{i\in I}$ with $a_i>0$, and a vector-valued
function $\mathbf{g}=(g_i)_{i\in I}$, where all the $g_i:\mathrm
X\to(0,\infty)$ are continuous. We define
\begin{equation*}\mathfrak M^+_{\boldsymbol{\sigma}}(\mathbf{A}):=\bigl\{\boldsymbol{\mu}\in\mathfrak
M^+(\mathbf{A}): \
\boldsymbol{\mu}\leqslant\boldsymbol{\sigma}\bigr\},\end{equation*}
where $\boldsymbol{\mu}\leqslant\boldsymbol{\sigma}$ means that
$\sigma^i-\mu^i\geqslant0$ for all $i\in I$, and
\begin{equation*}\mathfrak M^+_{\boldsymbol{\sigma}}(\mathbf{A},\mathbf{a},\mathbf{g}):=
\bigl\{\boldsymbol{\mu}\in\mathfrak
M^+_{\boldsymbol{\sigma}}(\mathbf{A}): \ \langle
g_i,\mu^i\rangle=a_i\quad\text{for all \ } i\in
I\bigr\},\end{equation*}
\[\mathcal E^+_{\boldsymbol{\sigma}}(\mathbf{A},\mathbf{a},\mathbf{g}):=
\mathfrak
M^+_{\boldsymbol{\sigma}}(\mathbf{A},\mathbf{a},\mathbf{g})\cap\mathcal
E^+(\mathbf{A}),\]
\[\mathcal E^+_{\boldsymbol{\sigma},\mathbf f}(\mathbf{A},\mathbf{a},\mathbf{g}):=\mathfrak
M^+_{\boldsymbol{\sigma}}(\mathbf{A},\mathbf{a},\mathbf{g})\cap\mathcal
E_{\mathbf f}^+(\mathbf{A})\] and then we introduce the extremal
value
\begin{equation}\label{G}G^{\boldsymbol{\sigma}}_{\mathbf{f}}(\mathbf{A},\mathbf{a},\mathbf{g}):=
\inf_{\mu\in\mathcal E^+_{\boldsymbol{\sigma},\mathbf
f}(\mathbf{A},\mathbf{a},\mathbf{g})}\,G_{\mathbf{f}}(\boldsymbol{\mu}).
\end{equation}
In (\ref{G}), as usual, the infimum over the empty set is taken to
be~$+\infty$.

If $\mathcal E^+_{\boldsymbol{\sigma},\mathbf
f}(\mathbf{A},\mathbf{a},\mathbf{g})$ is nonempty or, which is
equivalent, if it is true that\footnote{See
Lemma~\ref{lemma:finite.necsuf} below for necessary and (or)
sufficient conditions for (\ref{nonzero1}) to hold. Then, actually,
$G^{\boldsymbol{\sigma}}_{\mathbf{f}}(\mathbf{A},\mathbf{a},\mathbf{g})$
has to be finite (see Corollary~\ref{lemma:minusfinite}).}
\begin{equation}\label{nonzero1}
G^{\boldsymbol{\sigma}}_{\mathbf{f}}(\mathbf{A},\mathbf{a},\mathbf{g})<\infty,\end{equation}
then the following problem makes sense.

\begin{problem}
Does there exist
$\boldsymbol{\lambda}^{\boldsymbol{\sigma}}_{\mathbf{A}}\in\mathcal
E^+_{\boldsymbol{\sigma},\mathbf
f}(\mathbf{A},\mathbf{a},\mathbf{g})$ with
$G_{\mathbf{f}}(\boldsymbol{\lambda}^{\boldsymbol{\sigma}}_{\mathbf{A}})=
G^{\boldsymbol{\sigma}}_{\mathbf{f}}(\mathbf{A},\mathbf{a},\mathbf{g})$?\end{problem}

Along with its electrostatic interpretation, such a problem has
found various important applications in approximation theory (see,
e.g., \cite{D,DS,R}). The problem is called {\em solvable\/} if the
class $\mathfrak
S^{\boldsymbol{\sigma}}_{\mathbf{f}}(\mathbf{A},\mathbf{a},\mathbf{g})$
of all the minimizers
$\boldsymbol{\lambda}=\boldsymbol{\lambda}^{\boldsymbol{\sigma}}_{\mathbf{A}}$
is nonempty.

\subsection{On the uniqueness of minimizers}

\begin{lemma}\label{lemma:unique}
If $\boldsymbol{\lambda}$ and $\widehat{\boldsymbol{\lambda}}$
belong to $\mathfrak
S^{\boldsymbol{\sigma}}_{\mathbf{f}}(\mathbf{A},\mathbf{a},\mathbf{g})$,
then
\begin{equation}\label{uniq1}
\|\boldsymbol{\lambda}-\widehat{\boldsymbol{\lambda}}\|_{\mathcal
E^+(\mathbf{A})}=0.\end{equation}
\end{lemma}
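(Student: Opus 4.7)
The plan is to use the classical convexity argument. Given two minimizers $\boldsymbol{\lambda}$ and $\widehat{\boldsymbol{\lambda}}$ in $\mathfrak S^{\boldsymbol{\sigma}}_{\mathbf{f}}(\mathbf{A},\mathbf{a},\mathbf{g})$, I would first show that the midpoint $\boldsymbol{\mu}_0 := \tfrac12(\boldsymbol{\lambda}+\widehat{\boldsymbol{\lambda}})$ is admissible, i.e., lies in $\mathcal E^+_{\boldsymbol{\sigma},\mathbf f}(\mathbf{A},\mathbf{a},\mathbf{g})$. The constraint $\boldsymbol{\mu}_0\leqslant\boldsymbol{\sigma}$ and the equality $\langle g_i,\mu_0^i\rangle=a_i$ both follow from the convexity of $\mathfrak M^+_{\boldsymbol{\sigma}}(\mathbf{A})$ and from linearity of $\mu^i\mapsto\langle g_i,\mu^i\rangle$. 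Membership in $\mathcal E^+(\mathbf{A})$ follows because (as noted right after Lemma~\ref{enfinite}) $\mathcal E^+(\mathbf{A})$ is a convex cone, and finiteness of $G_{\mathbf{f}}(\boldsymbol{\mu}_0)$ follows by combining finiteness of $\kappa(\boldsymbol{\mu}_0,\boldsymbol{\mu}_0)$ with the linearity of $\boldsymbol{\mu}\mapsto\langle\mathbf{f},\boldsymbol{\mu}\rangle$ applied to the two finite values $\langle\mathbf f,\boldsymbol\lambda\rangle$ and $\langle\mathbf f,\widehat{\boldsymbol\lambda}\rangle$.

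Next I would invoke the isometry of Theorem~\ref{lemma:semimetric}, which together with identity~(\ref{Ren}) of Corollary~\ref{unknown} transports the parallelogram identity from the pre-Hilbert space $\mathcal E$ to $\mathcal E^+(\mathbf{A})$: writing out $\|R\boldsymbol{\lambda}-R\widehat{\boldsymbol{\lambda}}\|_{\mathcal E}^2$ by expansion in the pre-Hilbert scalar product yields
\[
\|\boldsymbol{\lambda}-\widehat{\boldsymbol{\lambda}}\|_{\mathcal E^+(\mathbf A)}^2
= 2\kappa(\boldsymbol{\lambda},\boldsymbol{\lambda})+2\kappa(\widehat{\boldsymbol{\lambda}},\widehat{\boldsymbol{\lambda}})-4\kappa(\boldsymbol{\mu}_0,\boldsymbol{\mu}_0).
\]
Adding $4\langle\mathbf f,\boldsymbol\lambda\rangle+4\langle\mathbf f,\widehat{\boldsymbol\lambda}\rangle -8\langle\mathbf f,\boldsymbol{\mu}_0\rangle = 0$ to the right-hand side, which vanishes by linearity of $\langle\mathbf f,\cdot\rangle$, one rewrites this as
\[
\|\boldsymbol{\lambda}-\widehat{\boldsymbol{\lambda}}\|_{\mathcal E^+(\mathbf A)}^2
= 2 G_{\mathbf f}(\boldsymbol{\lambda})+2 G_{\mathbf f}(\widehat{\boldsymbol{\lambda}})-4 G_{\mathbf f}(\boldsymbol{\mu}_0).
\]

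Finally, since $\boldsymbol\lambda$ and $\widehat{\boldsymbol\lambda}$ are minimizers, $G_{\mathbf f}(\boldsymbol{\lambda})=G_{\mathbf f}(\widehat{\boldsymbol{\lambda}})=G^{\boldsymbol{\sigma}}_{\mathbf{f}}(\mathbf{A},\mathbf{a},\mathbf{g})$, while admissibility of $\boldsymbol{\mu}_0$ forces $G_{\mathbf f}(\boldsymbol{\mu}_0)\geqslant G^{\boldsymbol{\sigma}}_{\mathbf{f}}(\mathbf{A},\mathbf{a},\mathbf{g})$. The displayed identity then gives $\|\boldsymbol{\lambda}-\widehat{\boldsymbol{\lambda}}\|_{\mathcal E^+(\mathbf A)}^2\leqslant 0$, and since $\|\cdot\|_{\mathcal E^+(\mathbf A)}$ is a semimetric (Theorem~\ref{lemma:semimetric}), it is non-negative, giving the claimed equality~(\ref{uniq1}).

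There is no real obstacle in the argument; the only point requiring mild care is justifying that the midpoint really lies in $\mathcal E^+_{\boldsymbol{\sigma},\mathbf f}(\mathbf{A},\mathbf{a},\mathbf{g})$ — in particular that $G_{\mathbf f}(\boldsymbol{\mu}_0)$ is finite — and that the parallelogram identity of $\mathcal E$ transfers verbatim to $\mathcal E^+(\mathbf A)$ via the isometric inclusion $R$. Both follow from the results already set up in Section~\ref{convexx}, so the proof reduces to a short calculation.
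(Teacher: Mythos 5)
Your argument is correct and is essentially the paper's own proof: both rest on the convexity of $\mathcal E^+_{\boldsymbol{\sigma},\mathbf f}(\mathbf{A},\mathbf{a},\mathbf{g})$ (so the midpoint is admissible), the parallelogram identity in the pre-Hilbert space $\mathcal E$ transported through the isometry~(\ref{seminorm}), and the cancellation of the linear terms $\langle\mathbf f,\cdot\rangle$. The only cosmetic difference is that you package the computation via $G_{\mathbf f}(\boldsymbol{\mu}_0)$ whereas the paper keeps $\|R\boldsymbol{\lambda}+R\widehat{\boldsymbol{\lambda}}\|^2+4\langle\mathbf f,\boldsymbol{\lambda}+\widehat{\boldsymbol{\lambda}}\rangle$ explicit.
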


\begin{proof} It follows from the convexity of $\mathcal E^+(\mathbf A)$ (see~Sec.~\ref{convexx})
that so is $\mathcal E^+_{\boldsymbol{\sigma},\mathbf
f}(\mathbf{A},\mathbf{a},\mathbf{g})$, which makes it possible to
conclude from (\ref{wen}), (\ref{Ren}), and (\ref{G}) that
\[4G^{\boldsymbol{\sigma}}_{\mathbf{f}}(\mathbf{A},\mathbf{a},\mathbf{g})\leqslant
4G_{\mathbf{f}}\Bigl(\frac{\boldsymbol{\lambda}+\widehat{\boldsymbol{\lambda}}}{2}\Bigr)=
\|R\boldsymbol{\lambda}+R\widehat{\boldsymbol{\lambda}}\|^2+
4\langle\mathbf{f},\boldsymbol{\lambda}+\widehat{\boldsymbol{\lambda}}\rangle.\]
On the other hand, applying the parallelogram identity in the
pre-Hilbert space~$\mathcal E$ to $R\boldsymbol{\lambda}$
and~$R\widehat{\boldsymbol{\lambda}}$ and then adding and
subtracting
$4\langle\mathbf{f},\boldsymbol{\lambda}+\widehat{\boldsymbol{\lambda}}\rangle$,
we get
\[\|R\boldsymbol{\lambda}-R\widehat{\boldsymbol{\lambda}}\|^2=
-\|R\boldsymbol{\lambda}+R\widehat{\boldsymbol{\lambda}}\|^2-4\langle\mathbf{f},\boldsymbol{\lambda}+
\widehat{\boldsymbol{\lambda}}\rangle+2G_{\mathbf{f}}(\boldsymbol{\lambda})+
2G_{\mathbf{f}}(\widehat{\boldsymbol{\lambda}}).\] When combined
with the preceding relation, this yields
\[0\leqslant\|R\boldsymbol{\lambda}-R\widehat{\boldsymbol{\lambda}}\|^2\leqslant-
4G^{\boldsymbol{\sigma}}_{\mathbf{f}}(\mathbf{A},\mathbf{a},\mathbf{g})+2G_{\mathbf{f}}(\boldsymbol{\lambda})+
2G_{\mathbf{f}}(\widehat{\boldsymbol{\lambda}})=0,\] which
establishes (\ref{uniq1}) because of~(\ref{seminorm}).\end{proof}

Thus, any two minimizers (if exist) are equivalent in $\mathcal
E^+(\mathbf{A})$. Consequently, they are $R$-equiv\-alent if the
kernel~$\kappa$ is strictly positive definite, and they are equal
if, moreover, all~$A_i$, $i\in I$, are mutually disjoint.

\section{Elementary properties of
$G^{\boldsymbol{\sigma}}_{\mathbf{f}}(\mathbf{A},\mathbf{a},\mathbf{g})$}
Before analyzing the existence of minimizers and their continuity,
we provide some auxiliary results, to be needed in the sequel. Write
\[g_{i,\inf}:=\inf_{x\in A_i}\,g_i(x),\qquad g_{i,\sup}:=\sup_{x\in A_i}\,g_i(x).\]

\subsection{Monotonicity of
$G^{\boldsymbol{\sigma}}_{\mathbf{f}}(\mathbf{A},\mathbf{a},\mathbf{g})$}

On the collection of all $(I^+,I^-)$-condensers in~$\mathrm X$, it
is natural to introduce an ordering relation~$\leqslant$ by
declaring $\mathbf{A}'\leqslant\mathbf{A}$ to mean that $A_i'\subset
A_i$ for all $i\in I$. Here, $\mathbf{A}'=(A_i')_{i\in I}$. If now
$\boldsymbol{\sigma}$ is a constraint associated with~$\mathbf{A}$
and $\boldsymbol{\sigma}'$ is that associated with~$\mathbf{A}'$,
then we write
$(\mathbf{A}',\boldsymbol{\sigma}')\leqslant(\mathbf{A},\boldsymbol{\sigma})$
provided $\mathbf{A}'\leqslant\mathbf{A}$ and
$\boldsymbol{\sigma}'\leqslant\boldsymbol{\sigma}$. Then
$G^{\boldsymbol{\sigma}}_{\mathbf{f}}(\mathbf{A},\mathbf{a},\mathbf{g})$
is a nonincreasing function of $(\mathbf{A},\boldsymbol{\sigma})$,
namely
\begin{equation}
G^{\boldsymbol{\sigma}}_{\mathbf{f}}(\mathbf{A},\mathbf{a},\mathbf{g})\leqslant
G^{\boldsymbol{\sigma}'}_{\mathbf{f}}(\mathbf{A'},\mathbf{a},\mathbf{g})\quad\text{whenever
\
}(\mathbf{A}',\boldsymbol{\sigma}')\leqslant(\mathbf{A},\boldsymbol{\sigma}).
\label{increas'}
\end{equation}

We shall employ the technique of exhaustion of~$\mathbf{A}$ by
compact~$\mathbf{K}$. In doing so, we shall need the following
notation and elementary lemma.

Given $\mathbf{A}$, let $\{\mathbf{K}\}_{\mathbf{A}}$ stand for the
increasing family of all compact condensers $\mathbf{K}=(K_i)_{i\in
I}$ such that $\mathbf{K}\leqslant\mathbf{A}$. For any
$\boldsymbol{\mu}\in\mathfrak M^+(\mathbf{A})$ and
$\mathbf{K}\in\{\mathbf{K}\}_{\mathbf{A}}$, let $\mu^i_{\mathbf{K}}$
denote the trace of~$\mu^i$ upon~$K_i$, i.e.
$\mu^i_{\mathbf{K}}:=\mu_{K_i}^i$, and let
$\boldsymbol{\mu}_{\mathbf{K}}:=(\mu_{\mathbf{K}}^i)_{i\in I}$.
Observe that, if $\boldsymbol{\sigma}$ is a constraint associated
with~$\mathbf{A}$, then
$\boldsymbol{\sigma}_{\mathbf{K}}=(\sigma^i_{\mathbf{K}})_{i\in I}$
is that associated with~$\mathbf{K}$. We further write
$\widehat{\boldsymbol{\mu}}_{\mathbf{K}}:=(\hat{\mu}_{\mathbf{K}}^i)_{i\in
I}$, where
\begin{equation}\label{hatlambda''}
\hat{\mu}^i_{\mathbf{K}}:=\frac{a_i}{\langle
g_i,\mu_{\mathbf{K}}^i\rangle}\,\mu_{\mathbf{K}}^i.\end{equation}

\begin{lemma}\label{exhaustion}Fix $\boldsymbol{\mu}\in\mathcal E^+_{\boldsymbol{\sigma},\mathbf
f}(\mathbf{A},\mathbf{a},\mathbf{g})$. For every $\varepsilon>0$,
there exists\/ $\mathbf{K}_0\in\{\mathbf{K}\}_{\mathbf{A}}$ such
that, for all\/ $\mathbf{K}\in\{\mathbf{K}\}_{\mathbf{A}}$ that
follow\/ $\mathbf{K}_0$,
\begin{equation}\label{exh1}\widehat{\boldsymbol{\mu}}_{\mathbf{K}}\in\mathcal
E^+_{(1+\varepsilon)\boldsymbol{\sigma}_{\mathbf{K}},\mathbf
f}(\mathbf{K},\mathbf{a},\mathbf{g}).\end{equation}
\end{lemma}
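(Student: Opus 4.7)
The plan is to construct $\mathbf{K}_0$ componentwise via inner regularity of Radon measures and then verify, one by one, each condition defining membership in $\mathcal{E}^+_{(1+\varepsilon)\boldsymbol{\sigma}_{\mathbf{K}},\mathbf{f}}(\mathbf{K},\mathbf{a},\mathbf{g})$. Since $\boldsymbol{\mu}\in\mathcal{E}^+_{\boldsymbol{\sigma},\mathbf{f}}(\mathbf{A},\mathbf{a},\mathbf{g})$, for each $i\in I$ the positive Radon measure $g_i\mu^i$ has total mass $a_i$, so inner regularity yields a compact set $K^0_i\subset A_i$ with $\langle g_i,\mu^i|_{K^0_i}\rangle\geqslant a_i/(1+\varepsilon)$. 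Setting $\mathbf{K}_0:=(K^0_i)_{i\in I}$, the local finiteness of $\mathbf{A}$ is inherited by $\mathbf{K}_0$ (any compact subset of $\mathrm{X}$ meets only finitely many of the $A_i\supset K^0_i$), so $\mathbf{K}_0\in\{\mathbf{K}\}_{\mathbf{A}}$. For any $\mathbf{K}\geqslant\mathbf{K}_0$, the same bound $\langle g_i,\mu^i_{\mathbf{K}}\rangle\geqslant a_i/(1+\varepsilon)$ persists because $K^0_i\subset K_i$ and $g_i>0$, so the normalizing scalar $c^i_{\mathbf{K}}:=a_i/\langle g_i,\mu^i_{\mathbf{K}}\rangle$ lies in $[1,1+\varepsilon]$ and $\widehat{\mu}^i_{\mathbf{K}}=c^i_{\mathbf{K}}\mu^i_{\mathbf{K}}$ is unambiguously defined.

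Next, the three pointwise requirements are immediate: $\widehat{\mu}^i_{\mathbf{K}}\in\mathfrak{M}^+(K_i)$ holds trivially; $\widehat{\mu}^i_{\mathbf{K}}\leqslant(1+\varepsilon)\mu^i_{\mathbf{K}}\leqslant(1+\varepsilon)\sigma^i_{\mathbf{K}}$ follows from the bound on $c^i_{\mathbf{K}}$ combined with $\mu^i\leqslant\sigma^i$; and $\langle g_i,\widehat{\mu}^i_{\mathbf{K}}\rangle=a_i$ by construction.

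For the finite-energy requirement, I invoke Lemma~\ref{enfinite} applied to~$\boldsymbol{\mu}$, obtaining $\mu^i\in\mathcal{E}$ and $\sum_{i\in I}\|\mu^i\|^2<\infty$. In the noncompact case the kernel satisfies $\kappa\geqslant 0$, so from $\mu^i=\mu^i_{\mathbf{K}}+\mu^i|_{A_i\setminus K_i}$, expanding the energy and discarding the two nonnegative terms gives $\|\mu^i_{\mathbf{K}}\|\leqslant\|\mu^i\|$; when $\mathrm{X}$ is compact, local finiteness forces $I$ to be finite and this step is elementary. Hence $\sum_{i\in I}\|\widehat{\mu}^i_{\mathbf{K}}\|^2\leqslant(1+\varepsilon)^2\sum_{i\in I}\|\mu^i\|^2<\infty$, and Lemma~\ref{enfinite} applied in reverse yields $\widehat{\boldsymbol{\mu}}_{\mathbf{K}}\in\mathcal{E}^+(\mathbf{K})$. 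For the $\mathbf{f}$-weighted part, in Case~I I use $f_i\geqslant 0$ (only the noncompact case is substantive) to obtain
\[
0\leqslant\langle\mathbf{f},\widehat{\boldsymbol{\mu}}_{\mathbf{K}}\rangle\leqslant(1+\varepsilon)\langle\mathbf{f},\boldsymbol{\mu}\rangle<\infty,
\]
while in Case~II, decomposing $\zeta=\zeta^+-\zeta^-$ and applying Lemma~\ref{integral} to each $\kappa(\cdot,\zeta^\pm)\in\Phi(\mathrm{X})$ identifies $\langle\mathbf{f},\widehat{\boldsymbol{\mu}}_{\mathbf{K}}\rangle$ with $\kappa(R\widehat{\boldsymbol{\mu}}_{\mathbf{K}},\zeta)$, which is finite by Cauchy--Schwarz in $\mathcal{E}$.

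The hardest part will be handling the infinite index set: the componentwise construction of $\mathbf{K}_0$ must yield a genuinely locally finite compact condenser, and the infinite sums $\sum_i\|\widehat{\mu}^i_{\mathbf{K}}\|^2$ and (in Case~II) $\sum_i\alpha_i\kappa(\widehat{\mu}^i_{\mathbf{K}},\zeta)$ must remain uniformly controlled over all $\mathbf{K}\geqslant\mathbf{K}_0$. The first point is settled by inheriting local finiteness from $\mathbf{A}$; the second is settled by the trace inequality $\|\mu^i_{\mathbf{K}}\|\leqslant\|\mu^i\|$ together with the $R$-isometry of Corollary~\ref{muten}, which converts the Case~II signed sum into a single scalar pairing of finite-energy measures.
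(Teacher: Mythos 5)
Your proof is correct, and the overall strategy coincides with the paper's: restrict each $\mu^i$ to a compact set carrying almost all of its $g_i$-mass, renormalize by a factor in $[1,1+\varepsilon]$, and verify the defining conditions of $\mathcal E^+_{(1+\varepsilon)\boldsymbol{\sigma}_{\mathbf{K}},\mathbf f}(\mathbf{K},\mathbf{a},\mathbf{g})$ one by one. Where you genuinely differ is in how the infinite sums are controlled. The paper applies Fuglede's Lemma~1.2.2 three times to obtain (\ref{wnew})--(\ref{wwnew}) and then chooses $K_i^0$ with $i$-dependent tolerances $\varepsilon\,i^{-2}$ and $\varepsilon^2 i^{-4}$, so that the errors in the $g_i$-masses, the field integrals, and the energies are simultaneously summable over~$i$. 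You instead control only the $g_i$-mass, with a uniform tolerance, and recover summability of $\sum_i\|\hat\mu^i_{\mathbf K}\|^2$ and of $\sum_i\langle f_i,\hat\mu^i_{\mathbf K}\rangle$ from monotonicity: $\|\mu^i_{\mathbf K}\|\leqslant\|\mu^i\|$ and $0\leqslant\langle f_i,\mu^i_{\mathbf K}\rangle\leqslant\langle f_i,\mu^i\rangle$, which are valid because $\kappa\geqslant0$ and $f_i\geqslant0$ whenever $\mathrm X$ is noncompact, the compact case degenerating to finite~$I$. Your Case~II treatment via Lemma~\ref{integral} is exactly the content of Lemma~\ref{caseii} applied to $\widehat{\boldsymbol{\mu}}_{\mathbf K}\in\mathcal E^+(\mathbf K)$, so it is no less rigorous than the paper's own handling (the reference to Corollary~\ref{muten} is a slight misattribution, but harmless). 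One remark on what each approach buys: the paper's fussier $i^{-2}$, $i^{-4}$ tolerances are not gratuitous --- estimates (\ref{unif2new})--(\ref{unif1new}) are reused verbatim in Sec.~\ref{sec:proof.th.cont} to prove $G_{\mathbf f}(\boldsymbol{\mu})=\lim_{\mathbf K\uparrow\mathbf A}G_{\mathbf f}(\widehat{\boldsymbol{\mu}}_{\mathbf K})$, where two-sided summable errors, not merely one-sided monotone bounds, are needed. Your leaner construction fully proves the lemma as stated, but would have to be strengthened to serve that later purpose.
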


\begin{proof}Application of \cite[Lemma~1.2.2]{F1} yields
\begin{align}
\langle g_i,\mu^i\rangle&=\lim_{\mathbf{K}\uparrow\mathbf{A}}\,\langle g_i,\mu_{\mathbf{K}}^i\rangle,
\qquad i\in I,\label{wnew}\\
\langle f_i,\mu^i\rangle&=\lim_{\mathbf{K}\uparrow\mathbf{A}}\,\langle f_i,\mu_{\mathbf{K}}^i\rangle,\qquad
i\in I,\label{w'new}\\
\kappa(\mu^i,\mu^j)&=\lim_{\mathbf{K}\uparrow\mathbf{A}}\,\kappa(\mu_{\mathbf{K}}^i,\mu_{\mathbf{K}}^j),\quad
i,j\in I.\label{wwnew}
\end{align}
Fix $\varepsilon>0$. By (\ref{wnew})--(\ref{wwnew}), for every $i\in
I$ one can choose a compact set $K_i^0\subset A_i$ so that, for all
compact sets~$K_i$ with the property $K_i^0\subset K_i\subset A_i$,
\begin{equation}
\frac{a_i}{\langle
g_i,\mu^i_{K_i}\rangle}<1+\varepsilon\,i^{-2},\label{unif2new}
\end{equation}
\begin{equation}
\bigl|\langle f_i,\mu^i\rangle-\langle
f_i,\mu^i_{K_i}\rangle\bigr|<\varepsilon\,i^{-2},\label{unif2'new}
\end{equation}
\begin{equation}
\bigl|\|\mu^i\|^2-\|\mu^i_{K_i}\|^2\bigr|<\varepsilon^2i^{-4}.\label{unif1new}
\end{equation}
Having denoted $\mathbf{K}_0:=(K_i^0)_{i\in I}$, for every
$\mathbf{K}\in\{\mathbf{K}\}_{\mathbf{A}}$ that
follows~$\mathbf{K}_0$ we get
\[\widehat{\boldsymbol{\mu}}_{\mathbf{K}}\in\mathcal
E^+_{(1+\varepsilon)\boldsymbol{\sigma}_{\mathbf{K}}}(\mathbf{K},\mathbf{a},\mathbf{g}),\]
the finiteness of the energy being obtained
from~(\ref{hatlambda''}), (\ref{unif2new}), and (\ref{unif1new})
with help of Lemma~\ref{enfinite}. Furthermore, since $\sum_{i\in
I}\,\langle f_i,\mu^i\rangle$ converges absolutely, we conclude from
(\ref{unif2new}) and~(\ref{unif2'new}) that so does $\sum_{i\in
I}\,\langle f_i,\hat{\mu}^i_{\mathbf{K}}\rangle$. This
means~(\ref{exh1}) as claimed.\end{proof}

\subsection{It is true that
$G^{\boldsymbol{\sigma}}_{\mathbf{f}}(\mathbf{A},\mathbf{a},\mathbf{g})>-\infty$}\label{secInf}

To prove the estimate, announced in the title, we need the following
two lemmas.

\begin{lemma}\label{caseii} Let Case\/ {\rm II} take place, i.e., let $f_i=\alpha_i\kappa(\cdot,\zeta)$ for all
$i\in I$, where $\zeta\in\mathcal E$ is given. Then the classes
$\mathcal E^+(\mathbf A)$ and $\mathcal E_{\mathbf f}^+(\mathbf{A})$
coincide and, furthermore,
\begin{equation}\label{yusss}
G_{\mathbf{f}}(\boldsymbol{\mu})=
\|R\boldsymbol{\mu}+\zeta\|^2-\|\zeta\|^2\quad\text{for all \ }
\boldsymbol{\mu}\in\mathcal E^+(\mathbf A).\end{equation}\end{lemma}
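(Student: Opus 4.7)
\medskip

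\noindent\textbf{Proof plan for Lemma~\ref{caseii}.}
My strategy is to reduce the evaluation of $\langle\mathbf{f},\boldsymbol{\mu}\rangle$ to a mutual energy computation in the pre-Hilbert space~$\mathcal{E}$, which will then be combined with identity~(\ref{Ren}) and the parallelogram-type expansion $\|R\boldsymbol{\mu}+\zeta\|^2=\|R\boldsymbol{\mu}\|^2+2\kappa(R\boldsymbol{\mu},\zeta)+\|\zeta\|^2$ to yield~(\ref{yusss}). The identity~(\ref{yusss}) will simultaneously show that $G_{\mathbf{f}}(\boldsymbol{\mu})$ is finite for every $\boldsymbol{\mu}\in\mathcal{E}^+(\mathbf{A})$, whence the equality $\mathcal{E}^+(\mathbf{A})=\mathcal{E}_{\mathbf{f}}^+(\mathbf{A})$.

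First I would fix $\boldsymbol{\mu}\in\mathcal{E}^+(\mathbf{A})$ and recall from the discussion after Corollary~\ref{unknown} that then $R\boldsymbol{\mu}\in\mathcal{E}$, with $\kappa(\boldsymbol{\mu},\boldsymbol{\mu})=\|R\boldsymbol{\mu}\|^2$. Writing $\zeta=\zeta^+-\zeta^-$ for the Hahn--Jordan decomposition of the given $\zeta\in\mathcal{E}$, both $\zeta^\pm$ belong to $\mathcal{E}^+$, so by the remark following Lemma~\ref{lemma:lower} their potentials $\kappa(\cdot,\zeta^\pm)$ lie in~$\mathrm\Phi(\mathrm X)$. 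The Cauchy--Schwarz inequality in~$\mathcal{E}$ gives the finiteness of each of $\kappa(R\boldsymbol{\mu}^{\pm},\zeta^\pm)$, hence of $\langle\kappa(\cdot,\zeta^\pm),R\boldsymbol{\mu}\rangle=\kappa(R\boldsymbol{\mu},\zeta^\pm)$.

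Next, I would apply Lemma~\ref{integral} with $\boldsymbol{\omega}=\boldsymbol{\mu}$ and $\psi=\kappa(\cdot,\zeta^+)$, then with $\psi=\kappa(\cdot,\zeta^-)$. The finiteness just established yields, in each case, the absolute convergence of $\sum_{i\in I}\alpha_i\kappa(\mu^i,\zeta^\pm)$ together with the identity
\[
\sum_{i\in I}\alpha_i\kappa(\mu^i,\zeta^\pm)=\kappa(R\boldsymbol{\mu},\zeta^\pm).
\]
Subtracting these two identities and using $f_i=\alpha_i\kappa(\cdot,\zeta)=\alpha_i\bigl[\kappa(\cdot,\zeta^+)-\kappa(\cdot,\zeta^-)\bigr]$, I obtain
\[
\langle\mathbf{f},\boldsymbol{\mu}\rangle=\sum_{i\in I}\alpha_i\kappa(\mu^i,\zeta)=\kappa(R\boldsymbol{\mu},\zeta),
\]
the rearrangement being legitimate because both series converge absolutely.

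Substituting into~(\ref{wen}) and invoking~(\ref{Ren}) gives
\[
G_{\mathbf{f}}(\boldsymbol{\mu})=\|R\boldsymbol{\mu}\|^2+2\kappa(R\boldsymbol{\mu},\zeta)=\|R\boldsymbol{\mu}+\zeta\|^2-\|\zeta\|^2,
\]
which is~(\ref{yusss}) and is manifestly a finite real number. Hence $\boldsymbol{\mu}\in\mathcal{E}_{\mathbf{f}}^+(\mathbf{A})$, proving $\mathcal{E}^+(\mathbf{A})\subset\mathcal{E}_{\mathbf{f}}^+(\mathbf{A})$; the reverse inclusion is immediate from the definition of $\mathcal{E}_{\mathbf{f}}^+(\mathbf{A})$. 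The only delicate point is the justification that $\langle\mathbf{f},\boldsymbol{\mu}\rangle$ equals $\kappa(R\boldsymbol{\mu},\zeta)$ as an absolutely convergent series; this is the place where the Hahn--Jordan splitting of $\zeta$ is essential, since Lemma~\ref{integral} requires the test function to belong to~$\mathrm\Phi(\mathrm X)$, a property not available for the signed potential $\kappa(\cdot,\zeta)$ itself.
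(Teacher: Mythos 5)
Your proof is correct and follows essentially the same route as the paper's: both apply Lemma~\ref{integral} separately to $\kappa(\cdot,\zeta^+)$ and $\kappa(\cdot,\zeta^-)$ to obtain $\langle\mathbf{f},\boldsymbol{\mu}\rangle=\kappa(\zeta,R\boldsymbol{\mu})$ as an absolutely convergent series, and then substitute this together with~(\ref{Ren}) into~(\ref{wen}). The only difference is that you spell out the verification (via Cauchy--Schwarz) of the finiteness hypothesis of Lemma~\ref{integral}, which the paper leaves tacit.
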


\begin{proof}Applying Lemma~\ref{integral} to $\boldsymbol{\mu}\in\mathcal
E^+(\mathbf{A})$ and each of $\kappa(\cdot,\zeta^+)$ and
$\kappa(\cdot,\zeta^-)$, we get
\begin{equation}\label{yu}
\langle\mathbf{f},\boldsymbol{\mu}\rangle=\sum_{i\in
I}\,\alpha_i\int\kappa(x,\zeta)\,d\mu^i(x)=\kappa(\zeta,R\boldsymbol{\mu}),
\end{equation}
where the series converges absolutely. Hence,
$\boldsymbol{\mu}\in\mathcal E_{\mathbf f}^+(\mathbf{A})$. Now,
substituting (\ref{Ren}) and~(\ref{yu}) into~(\ref{wen})
gives~(\ref{yusss}) as required.\end{proof}

\begin{lemma}\label{casei}Consider a condenser $\mathbf{B}=(B_\ell)_{\ell\in L}$
in a locally compact space~$\mathrm Y$,
$\mathbf{u}=(u_\ell)_{\ell\in L}$ with $u_\ell\in\mathrm\Phi(\mathrm
Y)$, and $\mathfrak F\subset\mathfrak M^+(\mathbf{B})$ with the
property that
\begin{equation}\label{L}\sup_{\boldsymbol{\omega}\in\mathfrak F}\,\omega^\ell(\mathrm
Y)<\infty\quad\text{for all \ }\ell\in L\end{equation} unless
$\mathrm Y$ is noncompact. Then
$\langle\mathbf{u},\boldsymbol{\omega}\rangle$ is well defined for
all\/ $\boldsymbol{\omega}\in\mathfrak F$, and
\[-\infty<\inf_{\boldsymbol{\omega}\in\mathfrak F}\,\langle\mathbf{u},\boldsymbol{\omega}\rangle
\leqslant\infty.\]
\end{lemma}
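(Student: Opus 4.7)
The plan is to split on whether $\mathrm Y$ is compact or not, exploiting the convention built into the definition of $\mathrm\Phi(\mathrm Y)$ that every $\psi\in\mathrm\Phi(\mathrm Y)$ is nonnegative whenever $\mathrm Y$ is noncompact.

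In the noncompact case, this convention forces each $u_\ell\geqslant0$, so every integral $\int u_\ell\,d\omega^\ell$ takes its value in $[0,\infty]$. The sum $\langle\mathbf{u},\boldsymbol{\omega}\rangle=\sum_{\ell\in L}\int u_\ell\,d\omega^\ell$ is then unconditionally well defined in $[0,\infty]$ (a series of nonnegative terms is invariant under rearrangement, so the footnote's criterion is automatic), and its infimum over $\mathfrak F$ is $\geqslant 0$, in particular $>-\infty$.

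In the compact case, the local finiteness requirement in the definition of a condenser forces $L$ itself to be finite, since $\mathrm Y$ is a compact subset of $\mathrm Y$ that can meet only finitely many $B_\ell$. Each $u_\ell$, being lower semicontinuous on the compact space $\mathrm Y$, is bounded below by some constant $c_\ell\in\mathbb R$. Together with hypothesis~(\ref{L}), this gives
\[
\int u_\ell\,d\omega^\ell\;\geqslant\;c_\ell\,\omega^\ell(\mathrm Y)\;\geqslant\;-|c_\ell|\sup_{\boldsymbol{\omega}'\in\mathfrak F}(\omega')^\ell(\mathrm Y),
\]
and the right-hand side is a finite constant independent of $\boldsymbol{\omega}$. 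Summing these finitely many estimates over $\ell\in L$ yields a uniform finite lower bound on $\langle\mathbf{u},\boldsymbol{\omega}\rangle$, proving the claim.

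There is no real obstacle here; the only subtlety worth tracking is the footnote's convention on what ``well defined'' means for a possibly infinite sum, which is handled automatically by nonnegativity in the noncompact case and becomes moot in the compact case since $L$ is forced to be finite. The upper bound $\leqslant\infty$ is of course trivial.
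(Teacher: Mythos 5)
Your proof is correct and follows essentially the same route as the paper's: reduce to the compact case (since nonnegativity of each $u_\ell$ makes the noncompact case immediate), note that compactness of $\mathrm Y$ forces $L$ to be finite via the local finiteness built into the definition of a condenser, and then bound each of the finitely many terms below using the lower bound of a lower semicontinuous function on a compact space together with hypothesis~(\ref{L}). Nothing further is needed.
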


\begin{proof} We can assume $\mathrm Y$ to be compact, for if not, then
$u_\ell\geqslant0$ for all \mbox{$\ell\in L$} and the lemma is
obvious. But then $\mathbf{B}$ is to be finite while every $u_\ell$,
being lower semicontinuous, is bounded from below by~$-c_\ell$,
where $0<c_\ell<\infty$. Hence, by~(\ref{L}),
\[-\infty<-c_\ell\sup_{\boldsymbol{\omega}\in\mathfrak F}\,\omega^\ell(\mathrm
Y)\leqslant\langle u_\ell,\omega^\ell\rangle\leqslant\infty,\] which
in view of the finiteness of~$L$ yields the lemma.\end{proof}

\begin{corollary}\label{lemma:minusfinite}
$G^{\boldsymbol{\sigma}}_{\mathbf{f}}(\mathbf{A},\mathbf{a},\mathbf{g})>-\infty$.
\end{corollary}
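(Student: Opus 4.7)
The plan is to split into Cases~I and~II and apply Lemmas~\ref{caseii} and~\ref{casei} respectively; the admissible class being empty makes the claim trivial (by convention the infimum over $\varnothing$ is $+\infty$), so I assume $\mathcal E^+_{\boldsymbol{\sigma},\mathbf f}(\mathbf{A},\mathbf{a},\mathbf{g})\neq\varnothing$.

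Case~II is immediate from Lemma~\ref{caseii}: for every admissible $\boldsymbol{\mu}$,
\[
G_{\mathbf{f}}(\boldsymbol{\mu})=\|R\boldsymbol{\mu}+\zeta\|^2-\|\zeta\|^2\geqslant-\|\zeta\|^2>-\infty,
\]
because $\zeta\in\mathcal E$ has finite energy. Hence $G^{\boldsymbol{\sigma}}_{\mathbf{f}}(\mathbf{A},\mathbf{a},\mathbf{g})\geqslant-\|\zeta\|^2$.

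For Case~I, I would first invoke Corollary~\ref{posen} to discard the energy term, since $\kappa(\boldsymbol{\mu},\boldsymbol{\mu})\geqslant0$ on $\mathcal E^+(\mathbf{A})$, obtaining $G_{\mathbf{f}}(\boldsymbol{\mu})\geqslant 2\langle\mathbf{f},\boldsymbol{\mu}\rangle$. It then suffices to bound $\langle\mathbf{f},\boldsymbol{\mu}\rangle$ from below uniformly on the admissible class, which I would do by applying Lemma~\ref{casei} with $\mathbf{u}=\mathbf{f}$ and $\mathfrak F=\mathfrak M^+_{\boldsymbol{\sigma}}(\mathbf{A},\mathbf{a},\mathbf{g})$. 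Hypothesis~(\ref{L}) is verified case-by-case on $\mathrm X$: if $\mathrm X$ is noncompact it is vacuous (and moreover $f_i\geqslant 0$ by the definition of $\mathrm\Phi(\mathrm X)$, so the bound is in fact trivial); if $\mathrm X$ is compact, local finiteness of $\mathbf A$ forces $I$ to be finite and every $A_i$ compact, and then continuity and strict positivity of $g_i$ on the compact set $A_i$ yield $g_{i,\inf}>0$, so the prescription $\langle g_i,\mu^i\rangle=a_i$ gives the uniform bound $\mu^i(\mathrm X)\leqslant a_i/g_{i,\inf}$ on the admissible class, which is exactly~(\ref{L}).

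I do not foresee any genuine obstacle: both cases reduce to a one-line invocation of a preceding lemma. The only point requiring attention is the case-split on compactness of $\mathrm X$ in Case~I, used merely to check the hypothesis of Lemma~\ref{casei}.
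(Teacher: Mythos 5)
Your proposal is correct and follows essentially the same route as the paper: Case~II is dispatched via Lemma~\ref{caseii} (formula~(\ref{yusss})), and Case~I combines Corollary~\ref{posen} with Lemma~\ref{casei} applied to $\mathfrak M^+_{\boldsymbol{\sigma}}(\mathbf{A},\mathbf{a},\mathbf{g})$, with the hypothesis~(\ref{L}) checked exactly as the paper does via $g_{i,\inf}>0$ and $\mu^i(\mathrm X)\leqslant a_i g_{i,\inf}^{-1}$ when $\mathrm X$ is compact. The extra details you supply (finiteness of $I$ for compact $\mathrm X$, vacuousness of~(\ref{L}) otherwise) are correct and merely make explicit what the paper leaves implicit.
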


\begin{proof} We can consider Case~I, since otherwise the corollary follows
from~(\ref{yusss}). Then $f_i\in\mathrm\Phi(\mathrm X)$ for all
$i\in I$. Furthermore, if $\mathrm X$ is compact, then
$g_{i,\inf}>0$ and
\[\sup_{\boldsymbol{\mu}\in\mathfrak
M^+_{\boldsymbol{\sigma}}(\mathbf{A},\mathbf{a},\mathbf{g})}\,\mu^i(\mathrm
X)\leqslant a_ig_{i,\inf}^{-1}<\infty.\] By Lemma~\ref{casei},
\begin{equation}\label{boundbelow}
-\infty<M_0\leqslant\langle\mathbf{f},\boldsymbol{\mu}\rangle\leqslant\infty\quad\text{for
all \ } \boldsymbol{\mu}\in\mathfrak
M^+_{\boldsymbol{\sigma}}(\mathbf{A},\mathbf{a},\mathbf{g}),\end{equation}
which together with Corollary~\ref{posen} completes the
proof.\end{proof}

\subsection{When does
$G^{\boldsymbol{\sigma}}_{\mathbf{f}}(\mathbf{A},\mathbf{a},\mathbf{g})<\infty$
hold?}\label{nonz}

Let $C(E)=C_\kappa(E)$ denote the interior capacity of a set
$E\subset\mathrm X$ relative to the kernel~$\kappa$ (see~\cite{F1}).

The following assertion provides necessary and (or) sufficient
conditions for relation~(\ref{nonzero1}) to hold (or, which is
equivalent, for $\mathcal E^+_{\boldsymbol{\sigma},\mathbf
f}(\mathbf{A},\mathbf{a},\mathbf{g})$ to be nonempty).

\begin{lemma}\label{lemma:finite.necsuf} If {\rm(\ref{nonzero1})} is
true, then necessarily
\begin{equation*}\label{nec}
C\bigl(\{x\in A_i: \ |f_i(x)|<\infty\}\bigr)>0\quad\text{for all \
}i\in I.\end{equation*} In the case where
\begin{equation}\label{s}\sum_{i\in
I}\,a_i\,g_{i,\inf}^{-1}<\infty,\end{equation} for
{\rm(\ref{nonzero1})} to hold, it is sufficient that the following
conditions be both satisfied:\smallskip
\begin{itemize}
\item[\rm(a)] for every $\mathbf{K}\in\{\mathbf{K}\}_{\mathbf{A}}$, $\boldsymbol{\sigma}_{\mathbf{K}}$ has finite
energy;\smallskip
\item[\rm(b)] there exists $M\in(0,\infty)$ not depending on~$i$ and such that
$\langle g_i,\sigma^i_{A_i^M}\rangle>a_i$, where $A^M_i:=\{x\in A_i:
\ |f_i(x)|\leqslant M\}$, $i\in I$.
\end{itemize}
\end{lemma}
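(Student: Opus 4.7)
The plan is to handle the necessary and sufficient directions separately. The necessity will exploit the fact from Fuglede's theory \cite{F1} that any $\nu\in\mathcal E^+$ vanishes on every set of interior capacity zero, while the sufficiency will be established by exhibiting an explicit admissible vector measure constructed as a scalar multiple of the trace of $\boldsymbol{\sigma}$ on a compact subcondenser of $\{A_i^M\}_{i\in I}$.

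For the necessary condition, I would fix $\boldsymbol{\mu}\in\mathcal E^+_{\boldsymbol{\sigma},\mathbf f}(\mathbf A,\mathbf a,\mathbf g)$ and argue componentwise. By Lemma~\ref{enfinite} each $\mu^i$ has finite energy, and $\mu^i\ne0$ because $\langle g_i,\mu^i\rangle=a_i>0$, so the recalled fact forbids $\mu^i$ from charging any set of zero interior capacity. Assuming for contradiction that $C(\{x\in A_i:|f_i(x)|<\infty\})=0$, it follows that $\mu^i$ is concentrated on $\{|f_i|=\infty\}$ with positive mass there. In Case~I, $f_i\in\mathrm\Phi(\mathrm X)$ is bounded below on that set, forcing $\langle f_i,\mu^i\rangle=+\infty$, which by Corollary~\ref{posen} contradicts $G_{\mathbf f}(\boldsymbol{\mu})<\infty$. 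In Case~II, since $\zeta^\pm\in\mathcal E^+$, the potentials $\kappa(\cdot,\zeta^\pm)$ are finite quasi-everywhere; thus $\{|f_i|=\infty\}$ has zero capacity, while $C(A_i)>0$ is forced by the existence of the nonzero $\mu^i\in\mathcal E^+(A_i)$, so the asserted inequality holds trivially.

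For the sufficiency I would fix the uniform constant $M$ provided by~(b) and, for each $i\in I$, apply \cite[Lemma~1.2.2]{F1} (the same tool used in Lemma~\ref{exhaustion}) to the Borel set $A_i^M$ to choose a compact set $K_i^0\subset A_i$ satisfying
\[
\langle g_i,\sigma^i_{A_i^M\cap K_i^0}\rangle>a_i.
\]
Local finiteness of $\mathbf A$ turns $\mathbf K_0:=(K_i^0)_{i\in I}$ into a compact condenser. I then set
\[
\mu^i:=c_i\,\sigma^i_{A_i^M\cap K_i^0}\quad\text{with}\quad c_i:=\frac{a_i}{\langle g_i,\sigma^i_{A_i^M\cap K_i^0}\rangle}\in(0,1),
\]
so that $\mu^i\leqslant\sigma^i_{K_i^0}\leqslant\sigma^i$ and $\langle g_i,\mu^i\rangle=a_i$ by construction. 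Since $0\leqslant\mu^i\leqslant\sigma^i_{K_i^0}$ and $\kappa\geqslant 0$ when $\mathrm X$ is noncompact (the compact case is analogous and in fact simpler, since $I$ is then finite and $\kappa$ is lsc hence bounded below on $\mathrm X\times\mathrm X$), expanding $\|\sigma^i_{K_i^0}\|^2=\|\mu^i\|^2+2\kappa(\mu^i,\sigma^i_{K_i^0}-\mu^i)+\|\sigma^i_{K_i^0}-\mu^i\|^2$ yields $\|\mu^i\|\leqslant\|\sigma^i_{K_i^0}\|$, whereupon hypothesis~(a) applied to $\mathbf K_0$ and Lemma~\ref{enfinite} give $\boldsymbol{\mu}\in\mathcal E^+(\mathbf A)$.

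For the external field, $|f_i|\leqslant M$ on $\mathrm{supp}\,\mu^i\subset A_i^M$ and $\mu^i(A_i)\leqslant a_i/g_{i,\inf}$, so
\[
\sum_{i\in I}|\langle f_i,\mu^i\rangle|\leqslant M\sum_{i\in I}\frac{a_i}{g_{i,\inf}}<\infty
\]
by~(\ref{s}); thus the series defining $\langle\mathbf f,\boldsymbol{\mu}\rangle$ converges absolutely, $G_{\mathbf f}(\boldsymbol{\mu})$ is finite, and $\boldsymbol{\mu}\in\mathcal E^+_{\boldsymbol{\sigma},\mathbf f}(\mathbf A,\mathbf a,\mathbf g)$, which proves~(\ref{nonzero1}). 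I expect the main obstacle to be the passage from the Borel inequality $\langle g_i,\sigma^i_{A_i^M}\rangle>a_i$ (where $A_i^M$ need not be closed, particularly in Case~II where $f_i$ is a difference of two lsc functions) to its counterpart on a compact subset $A_i^M\cap K_i^0$; inner regularity of the Radon measure $\sigma^i$ together with \cite[Lemma~1.2.2]{F1} will do the job, after which all remaining verifications reduce to the elementary bookkeeping described above.
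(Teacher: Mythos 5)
Your proposal is correct and follows essentially the same route as the paper: the sufficiency is established, exactly as there, by exhibiting the normalized trace of $\boldsymbol{\sigma}$ on a compact portion of the sets $A_i^M$ and invoking (a), (\ref{s}) and Lemma~\ref{enfinite}, while the necessity rests on the same fact that a set of interior capacity zero is negligible for nonzero measures of finite energy, forcing $|f_i|=\infty$ almost everywhere and contradicting the finiteness of the weighted energy. The only difference worth noting is that the paper first passes to the compactly supported normalized trace $\widehat{\boldsymbol{\mu}}_{\mathbf{K}}$ (via Lemma~\ref{exhaustion}) before applying Fuglede's Lemma~2.3.1, which sidesteps the distinction between local $\mu^i$-negligibility and $\mu^i$-negligibility that your direct appeal to that lemma for a possibly unbounded $\mu^i$ quietly skips; restricting to a compact set of positive $\mu^i$-measure repairs this in one line.
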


\begin{proof}
To prove the necessity part of the lemma, fix
$\boldsymbol{\mu}\in\mathcal E^+_{\boldsymbol{\sigma},\mathbf
f}(\mathbf{A},\mathbf{a},\mathbf{g})$; then, by
Lemma~\ref{exhaustion}, $\widehat{\boldsymbol{\mu}}_{\mathbf{K}}$
has finite $\mathbf{f}$-weighted energy provided
$\mathbf{K}\in\{\mathbf{K}\}_{\mathbf{A}}$ is sufficiently large.
Suppose, contrary to our claim, that $C\bigl(\{x\in A_{i_0}:\
|f_{i_0}(x)|<\infty\}\bigr)=0$ for some $i_0\in I$. Since
$\hat{\mu}^{i_0}_{\mathbf{K}}$ has finite energy and is compactly
supported in~$A_{i_0}$, \cite[Lemma~2.3.1]{F1} shows that
$|f_{i_0}(x)|=\infty$ holds $\hat{\mu}^{i_0}_{\mathbf{K}}$-al\-most
everywhere ($\hat{\mu}^{i_0}_{\mathbf{K}}$-a.e.) in~$\mathrm X$.
This is impossible, for $\hat{\mu}^{i_0}_{\mathbf{K}}$ is nonzero
while
$\langle\mathbf{f},\widehat{\boldsymbol{\mu}}_{\mathbf{K}}\rangle$
is finite.

To establish the sufficient part, suppose (\ref{s}), (a), and~(b) to
be satisfied. Then for every $i\in I$ one can choose a compact set
$K_i\subset A^M_i$ so that
\begin{equation}\label{finitesuff1}\langle
g_i,\sigma^i_{K_i}\rangle>a_i,\end{equation} which is seen from~(b)
due to~\cite[Lemma~1.2.2]{F1}. Having denoted
$\mathbf{K}:=(K_i)_{i\in I}$, we consider the vector measure
$\widehat{\boldsymbol{\sigma}}_{\mathbf{K}}$ with the components
$\hat{\sigma}^i_{\mathbf{K}}$, defined by~(\ref{hatlambda''}) with
$\sigma_{\mathbf{K}}^i$ in place of $\mu_{\mathbf{K}}^i$. It follows
from~(\ref{finitesuff1}) that
$\widehat{\boldsymbol{\sigma}}_{\mathbf{K}}\in\mathcal
E^+_{{\boldsymbol{\sigma}_{\mathbf{K}}}}(\mathbf{K},\mathbf{a},\mathbf{g})$,
the finiteness of the energy being obtained from~(a) in view of
Lemma~\ref{enfinite}. Furthermore, since
\[\sum_{i\in I}\,\langle|f_i|,\hat{\sigma}^i_{\mathbf{K}}\rangle\leqslant M\,\sum_{i\in I}\,
\frac{a_i\sigma^i_{\mathbf{K}}(\mathrm X)}{\langle
g_i,\sigma^i_{\mathbf{K}}\rangle}\leqslant M\,\sum_{i\in
I}\,a_i\,g_{i,\inf}^{-1},\]  we actually have
$\widehat{\boldsymbol{\sigma}}_{\mathbf{K}}\in\mathcal
E^+_{\boldsymbol{\sigma}_{\mathbf{K}},\mathbf{f}}(\mathbf{K},\mathbf{a},\mathbf{g})$
by~(\ref{s}), and so
$G_{\mathbf{f}}^{\sigma_{\mathbf{K}}}(\mathbf{K},\mathbf{a},\mathbf{g})<\infty$.
Since
$(\mathbf{K},\boldsymbol{\sigma}_{\mathbf{K}})\leqslant(\mathbf{A},\boldsymbol{\sigma})$,
this together with~(\ref{increas'}) yields~(\ref{nonzero1}) as was
to be proved.\end{proof}

\begin{remark}If $\mathbf{A}$ is finite, then Lemma~\ref{lemma:finite.necsuf} remains
true with (b) replaced by the following assumption: for every $i\in
I$, $\langle g_i,\sigma^i\rangle>a_i$ while $|f_i|\ne\infty$ locally
$\sigma^i$-a.e. (see \cite[Lemma 4]{Z6}).\end{remark}

\section{Main results}\label{main}
From now on, (\ref{nonzero1}) is always assumed to hold. Observe
that, according to Corollary~\ref{lemma:minusfinite},
$G^{\boldsymbol{\sigma}}_{\mathbf{f}}(\mathbf{A},\mathbf{a},\mathbf{g})$
is then actually finite.

Suppose for a moment that the condenser $\mathbf{A}$ is compact.
Then the class $\mathfrak
M^+_{\boldsymbol{\sigma}}(\mathbf{A},\mathbf{a},\mathbf{g})$ is
vaguely bounded and closed and hence, by Lemma~\ref{lem:vaguecomp},
it is vaguely compact. If, moreover, $\mathbf{A}$~is finite,
$\kappa$ is continuous on~$A^+\times A^-$, and Case~I holds, then
$G_{\mathbf{f}}(\boldsymbol{\mu})$ is vaguely lower semicontinuous
on~$\mathcal E_{\mathbf{f}}^+(\mathbf{A})$ and, therefore, the
existence of minimizers~$\lambda_\mathbf{A}^{\boldsymbol{\sigma}}$
immediately follows (cf.~\cite{GR,NS,O,ST}).

However, these arguments break down if any of the above-mentioned
four assumptions is dropped, and then the problem on the existence
of minimizers~$\lambda_\mathbf{A}^{\boldsymbol{\sigma}}$ becomes
rather nontrivial. In particular, $\mathfrak
M^+_{\boldsymbol{\sigma}}(\mathbf{A},\mathbf{a},\mathbf{g})$ is no
longer vaguely compact if any of the $A_i$ is noncompact. Another
difficulty is that $G_{\mathbf{f}}(\boldsymbol{\mu})$ might not be
vaguely lower semicontinuous on~$\mathcal
E_{\mathbf{f}}^+(\mathbf{A})$ when Case~II takes place.

To solve the problem on the existence of
minimizers~$\lambda_\mathbf{A}^{\boldsymbol{\sigma}}$ in the general
case, we develop an approach based on both the vague and strong
topologies in the semimetric space~$\mathcal E^+(\mathbf{A})$,
introduced for vector measures of finite dimensions
in~\cite{Z6,Z7,Z8}. For $I=\{1\}$, see also~\cite{Z8a} (compare
with~\cite{D,DS,R}).

\subsection{Standing assumptions}\label{sec:standing}

In addition to (\ref{nonzero1}), in all that follows it is always
required that the kernel~$\kappa$ is consistent and either
$I^-=\varnothing$, or there hold (\ref{s}) and the following
condition:
\begin{equation}\sup_{x\in A^+,\ y\in
A^-}\,\kappa(x,y)<\infty.\label{bou}
\end{equation}

\begin{remark} Note that these assumptions on a kernel are not too
restrictive. In particular, they all are satisfied by the Newtonian,
Riesz, or Green kernels in~$\mathbb R^n$, $n\geqslant2$, provided
the Euclidean distance between $A^+$ and $A^-$ is nonzero, as well
as by the restriction of the logarithmic kernel in~$\mathbb R^2$ to
the open unit disk.\end{remark}

\subsection{Minimizers: existence and vague compactness}\label{sec:ex}
A proposition $u(x)$ involving a variable point $x\in\mathrm X$ is
said to subsist {\em nearly everywhere\/}~(n.e.) in~$E$, where
$E\subset\mathrm X$, if the set of all $x\in E$ for which $u$ fails
to hold is of interior capacity zero.

\begin{theorem}\label{exist} Under the standing assumptions, suppose, moreover, for every $i\in
I$ the following {\rm(a)--(c)} to hold:\smallskip
\begin{itemize}
\item[\rm(a)] Either $g_{i,\inf}>0$ or
$A_i$ can be written as a countable union of compact sets;\smallskip
\item[\rm(b)] Either $g_{i,\sup}<\infty$ or there exist\/ $r_i\in(1,\infty)$
and\/ $\tau_i\in\mathcal E$ with the property
\begin{equation}
g_i^{r_i}(x)\leqslant\kappa(x,\tau_i)\quad\text{n.e. in \ } A_i;
\label{growth}
\end{equation}
\item[\rm(c)] $A_i$ either is compact or has finite interior
capacity.
\end{itemize}\smallskip
Then, for any $\boldsymbol{\sigma}$, $\mathbf{f}$, and\/
$\mathbf{a}$, $\mathfrak
S^{\boldsymbol{\sigma}}_{\mathbf{f}}(\mathbf{A},\mathbf{a},\mathbf{g})$
is nonempty and vaguely compact.
\end{theorem}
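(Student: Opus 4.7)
The strategy is to transfer the problem, via the isometry of Theorem~\ref{lemma:semimetric}, to the pre-Hilbert space~$\mathcal E$ and then combine strong Cauchyness of a minimizing net with the announced strong completeness of the admissible subspace (Theorem~\ref{th:strong}) and the consistency of~$\kappa$. First I would pick a net $(\boldsymbol{\mu}_s)_{s\in S}\subset\mathcal E^+_{\boldsymbol{\sigma},\mathbf f}(\mathbf A,\mathbf a,\mathbf g)$ with $G_{\mathbf f}(\boldsymbol{\mu}_s)\downarrow G^{\boldsymbol{\sigma}}_{\mathbf f}(\mathbf A,\mathbf a,\mathbf g)$, the extremal value being finite by~(\ref{nonzero1}) and Corollary~\ref{lemma:minusfinite}. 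Since $\mathcal E^+_{\boldsymbol{\sigma},\mathbf f}(\mathbf A,\mathbf a,\mathbf g)$ is convex, $(\boldsymbol{\mu}_s+\boldsymbol{\mu}_t)/2$ is admissible, and applying the parallelogram identity in~$\mathcal E$ to $R\boldsymbol{\mu}_s$, $R\boldsymbol{\mu}_t$ --- exactly as in the proof of Lemma~\ref{lemma:unique} --- yields
\[\|\boldsymbol{\mu}_s-\boldsymbol{\mu}_t\|_{\mathcal E^+(\mathbf A)}^2\leqslant 2G_{\mathbf f}(\boldsymbol{\mu}_s)+2G_{\mathbf f}(\boldsymbol{\mu}_t)-4G^{\boldsymbol{\sigma}}_{\mathbf f}(\mathbf A,\mathbf a,\mathbf g)\to 0,\]
so that $(\boldsymbol{\mu}_s)$ is strongly Cauchy in~$\mathcal E^+(\mathbf A)$.

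Next I would invoke Theorem~\ref{th:strong} to produce a strong limit $\boldsymbol{\lambda}\in\mathcal E^+(\mathbf A)$ still satisfying $\boldsymbol{\lambda}\leqslant\boldsymbol{\sigma}$ and $\langle g_i,\lambda^i\rangle\leqslant a_i$ --- these inequality constraints being vaguely closed by lower semicontinuity of $\langle g_i,\cdot\rangle$ and of $\boldsymbol{\mu}\mapsto(\sigma^i-\mu^i)(\varphi)$ for $\varphi\geqslant0$. In parallel, $\mathfrak M^+_{\boldsymbol{\sigma}}(\mathbf A)$ is vaguely bounded by~$\boldsymbol{\sigma}$, so Lemma~\ref{lem:vaguecomp} provides a vague cluster point of $(\boldsymbol{\mu}_s)$ along a subnet; consistency~(C$_1$), applied componentwise via the Jordan parts $R\boldsymbol{\mu}_s^{\pm}=\sum_{i\in I^\pm}\mu^i_s$ of~$R\boldsymbol{\mu}_s$ (whose separate strong Cauchyness can be extracted from that of $R\boldsymbol{\mu}_s$ with the help of the cross-term bound~(\ref{bou})), identifies this cluster point with~$\boldsymbol{\lambda}$ up to $R$-equivalence. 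Lower semicontinuity of $G_{\mathbf f}$ --- strong continuity in Case~II via $G_{\mathbf f}(\boldsymbol{\mu})=\|R\boldsymbol{\mu}+\zeta\|^2-\|\zeta\|^2$ (Lemma~\ref{caseii}), and in Case~I vague lsc of $\langle\mathbf f,\cdot\rangle$ (Lemma~\ref{lemma:lower}) combined with strong convergence of the quadratic part --- then gives $G_{\mathbf f}(\boldsymbol{\lambda})\leqslant G^{\boldsymbol{\sigma}}_{\mathbf f}(\mathbf A,\mathbf a,\mathbf g)$.

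The main obstacle will be verifying the equality $\langle g_i,\lambda^i\rangle=a_i$ rather than merely~$\leqslant a_i$: strict inequality for some~$i$ would signal escape of mass to infinity along~$A_i$, and this is precisely what hypotheses (a)--(c) are designed to rule out. I would argue as follows. Under~(a) either $g_{i,\inf}>0$, yielding the a priori bound $\mu^i_s(\mathrm X)\leqslant a_ig_{i,\inf}^{-1}$, or $A_i$ is $\sigma$-compact, which together with~(c) (compactness or finite interior capacity of~$A_i$) permits a cutoff/exhaustion argument in the spirit of Lemma~\ref{exhaustion} to control the tails. Condition~(b) then governs $\int g_i\,d\mu^i_s$: if $g_{i,\sup}<\infty$, $\langle g_i,\cdot\rangle$ is vaguely continuous on the tight family, while otherwise the subpotential growth bound $g_i^{r_i}\leqslant\kappa(\cdot,\tau_i)$ with $r_i>1$, combined via H\"older's inequality and Cauchy--Schwarz against $\tau_i\in\mathcal E$, supplies the integrability needed to pass to the limit using strong convergence. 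Once the equality is secured, $\boldsymbol{\lambda}\in\mathcal E^+_{\boldsymbol{\sigma},\mathbf f}(\mathbf A,\mathbf a,\mathbf g)$, and the preceding inequality forces $G_{\mathbf f}(\boldsymbol{\lambda})=G^{\boldsymbol{\sigma}}_{\mathbf f}(\mathbf A,\mathbf a,\mathbf g)$, so $\boldsymbol{\lambda}\in\mathfrak S^{\boldsymbol{\sigma}}_{\mathbf f}(\mathbf A,\mathbf a,\mathbf g)$.

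Vague compactness of $\mathfrak S^{\boldsymbol{\sigma}}_{\mathbf f}(\mathbf A,\mathbf a,\mathbf g)$ then follows by the same mechanism: it sits in the vaguely relatively compact set $\mathfrak M^+_{\boldsymbol{\sigma}}(\mathbf A)$, and for any net $(\boldsymbol{\lambda}_s)\subset\mathfrak S^{\boldsymbol{\sigma}}_{\mathbf f}(\mathbf A,\mathbf a,\mathbf g)$ with a vague cluster point $\boldsymbol{\lambda}_0$, the parallelogram estimate --- now with $G_{\mathbf f}$ identically equal to $G^{\boldsymbol{\sigma}}_{\mathbf f}$ along the net --- forces $(\boldsymbol{\lambda}_s)$ to be strongly Cauchy, consistency identifies the strong and vague limits, and the argument of the previous paragraph preserves $\langle g_i,\cdot\rangle=a_i$ at~$\boldsymbol{\lambda}_0$, so $\boldsymbol{\lambda}_0\in\mathfrak S^{\boldsymbol{\sigma}}_{\mathbf f}(\mathbf A,\mathbf a,\mathbf g)$.
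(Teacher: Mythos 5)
Your overall architecture coincides with the paper's: a minimizing net is strongly Cauchy by the parallelogram identity, the strong completeness of $\mathcal E^+_{\boldsymbol{\sigma}}(\mathbf{A},\leqslant\!\mathbf{a},\mathbf{g})$ (Theorem~\ref{th:strong}, itself resting on consistency via (C$_1$)/(C$_2$)) identifies a vague cluster point with the strong limit, Case~II is handled through $G_{\mathbf f}(\boldsymbol{\mu})=\|R\boldsymbol{\mu}+\zeta\|^2-\|\zeta\|^2$ and Case~I through a Fatou-type argument, and vague compactness of the solution set comes from the same mechanism (this is exactly Lemma~\ref{lemma:WM} plus the identification of minimizers with extremal measures). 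The gap is in the step you correctly single out as the main obstacle: proving $\langle g_i,\lambda^i\rangle=a_i$ rather than $\leqslant a_i$. Your three proposed mechanisms do not close it. First, when $g_{i,\sup}<\infty$ you appeal to vague continuity of $\langle g_i,\cdot\rangle$ ``on the tight family,'' but tightness is precisely what is in question: a uniform mass bound $\mu_s^i(\mathrm X)\leqslant a_ig_{i,\inf}^{-1}$ does not prevent mass from escaping to infinity inside a noncompact $A_i$. Second, Lemma~\ref{exhaustion} controls the tail of \emph{one fixed} measure and cannot yield the uniform (in $s$) tail estimate needed to pass to the limit along the net. Third, H\"older against $\tau_i$ alone only bounds $\langle g_i,\mu_s^i\rangle$; it does not make the tails vanish.

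The missing idea --- and the actual content of hypothesis (c) --- is the use of \emph{interior equilibrium measures of the complements of compacts in $A_i$}. Since $C(A_i)<\infty$, for each compact $K\subset A_i$ there is $\theta_{K^c}\in\mathcal E^+$ with $\theta_{K^c}(\mathrm X)=\|\theta_{K^c}\|^2=C(K^c)$ and $\kappa(\cdot,\theta_{K^c})\geqslant1$ n.e.\ in $K^c:=A_i\setminus K$; the net $(\theta_{K^c})_K$ is strongly Cauchy and converges vaguely to zero, so consistency (C$_1$) gives $\|\theta_{K^c}\|\to0$. Combining $\kappa(\cdot,\theta_{K^c})\geqslant1$ with the growth bound (\ref{growth}) (to which the bounded case reduces by taking $\tau_i:=M^{r_i}\theta_{A_i}$, again via (c)) yields $g_i\chi_{K^c}\leqslant\kappa(\cdot,\tau_i)^{1/r_i}\kappa(\cdot,\theta_{K^c})^{1/q_i}$ n.e.\ in $A_i$, hence $\mu_s^i$-a.e.\ (this is where (a) enters, ensuring local negligibility coincides with negligibility), and then H\"older plus Cauchy--Schwarz give
\begin{equation*}
\langle g_i\chi_{K^c},\mu_s^i\rangle\leqslant\|\tau_i\|^{1/r_i}\,\|\theta_{K^c}\|^{1/q_i}\,\|\mu_s^i\|,
\end{equation*}
which is uniformly small in $s$ by strong boundedness of $(\mu_s^i)$. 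It is the factor $\|\theta_{K^c}\|^{1/q_i}\to0$, absent from your sketch, that kills the tails and upgrades $\leqslant a_i$ to $=a_i$ (this is Lemma~\ref{lemma:exist'} in the paper). Without it the argument stalls exactly at the point where (a)--(c) must be used.
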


\begin{remark} If $I^-$ is nonempty, then condition (a) follows
immediately from~(\ref{s}) and, hence, it can be omitted. It also
holds automatically if the space $\mathrm X$ is countable at
infinity (e.g., for $\mathrm X=\mathbb R^n$).\end{remark}

\begin{remark}Regarding condition (c), note that a compact set $K\subset\mathrm X$
might be of infinite capacity; $C(K)$ is necessarily finite provided
the kernel is strictly positive definite~\cite{F1}. On the other
hand, even for the Newtonian kernel, sets of finite capacity might
be noncompact~\cite{L}.\end{remark}

\begin{remark}Condition (c) is essential for the validity of Theorem~\ref{exist}. See Sec.~\ref{sharp} for
some examples, illustrating its sharpness.
\end{remark}

\begin{corollary}\label{cor:exist} If $\mathbf{A}=\mathbf{K}$ is compact, then,
for any $\boldsymbol{\sigma}$, $\mathbf{f}$, $\mathbf{g}$, and\/
$\mathbf{a}$, $\mathfrak
S^{\boldsymbol{\sigma}}_{\mathbf{f}}(\mathbf{A},\mathbf{a},\mathbf{g})$
is nonempty and vaguely compact.
\end{corollary}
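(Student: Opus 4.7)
The plan is to deduce this immediately from Theorem~\ref{exist}, by observing that, when every $A_i$ is compact, each of the three hypotheses (a)--(c) of that theorem is automatic. The standing assumptions of Section~\ref{sec:standing}, namely that $\kappa$ is consistent and that either $I^-=\varnothing$ or both (\ref{s}) and (\ref{bou}) hold, remain in force throughout Section~\ref{main} and hence apply here; so the only thing to check is that (a), (b), (c) are satisfied for every $i\in I$.

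For a fixed $i\in I$ with $A_i$ compact, the function $g_i$ is continuous and strictly positive on $A_i$, so Weierstrass' extreme value theorem gives $g_{i,\inf}=\min_{x\in A_i}g_i(x)>0$ and $g_{i,\sup}=\max_{x\in A_i}g_i(x)<\infty$. This simultaneously secures the first alternative in (a) (and incidentally the second one as well, since a compact set is trivially a countable union of compact sets) and the first alternative in (b), so that no auxiliary measure $\tau_i\in\mathcal E$ or growth estimate (\ref{growth}) is needed. Condition (c) is then verified by its own first alternative, namely that $A_i$ is compact.

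Having verified all the hypotheses, I would invoke Theorem~\ref{exist} directly to conclude that $\mathfrak S^{\boldsymbol{\sigma}}_{\mathbf{f}}(\mathbf{A},\mathbf{a},\mathbf{g})$ is nonempty and vaguely compact, which is exactly the statement of the corollary. There is no real obstacle here, because all the genuine work (strong completeness of the admissible class, vague lower semicontinuity on bounded strong balls, and extraction of a vague cluster point which is then identified with the strong limit) is carried out inside the proof of Theorem~\ref{exist}; the present corollary merely records the pleasant observation that compactness of the plates trivializes the technical side conditions (a)--(c).
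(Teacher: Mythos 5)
Your proof is correct and follows exactly the paper's route: the paper likewise deduces the corollary from Theorem~\ref{exist}, noting that $g_i$ is bounded on the compact set $K_i$ (so (b) holds), with (a) and (c) being immediate for compact plates. Your verification is just a slightly more explicit version of the same one-line argument.
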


\begin{proof}This is an immediate consequence of Theorem~\ref{exist},
since $g_i$ is bounded on~$K_i$.\end{proof}

\subsection{On continuity of $G^{\boldsymbol{\sigma}}_{\mathbf{f}}(\mathbf{A},\mathbf{a},\mathbf{g})$
and $\boldsymbol{\lambda}^{\boldsymbol{\sigma}}_{\mathbf{A}}$ with
respect to $(\mathbf{A},\boldsymbol{\sigma})$}\label{appr1}

We write $\mathbf{A}_s\downarrow\mathbf{A}$, where
$\mathbf{A}_s=(A_i^s)_{i\in I}$, $s\in S$, is a net of condensers,
if $\mathbf{A}_{s_2}\leqslant\mathbf{A}_{s_1}$ whenever
$s_1\leqslant s_2$ and
\[\bigcap_{s\in S}\,A_i^s=A_i\quad\text{for all \ } i\in I.\]

\begin{theorem}\label{cor:sigma} Let\/
$\mathbf{A}_s\downarrow\mathbf{A}$, and let for some $s_0\in S$ all
the assumptions\/\footnote{Including the standing ones.}
of~Theorem~{\rm\ref{exist}} with~$\mathbf{A}_{s_0}$ instead
of~$\mathbf{A}$ be satisfied. Let $\boldsymbol{\sigma}_s$ be a
constraint associated with~$\mathbf{A}_s$, and let
$(\boldsymbol{\sigma}_s)_{s\in S}$ decrease and converge vaguely
to~$\boldsymbol{\sigma}$. Then
\begin{equation*}\label{sigma}
G^{\boldsymbol{\sigma}}_{\mathbf{f}}(\mathbf{A},\mathbf{a},\mathbf{g})=\lim_{s\in
S}\,G^{\boldsymbol{\sigma}_s}_{\mathbf{f}}(\mathbf{A}_s,\mathbf{a},\mathbf{g}).\end{equation*}
Fix arbitrary
$\boldsymbol{\lambda}^{\boldsymbol{\sigma}_{s}}_{\mathbf{A}_{s}}\in\mathfrak
S^{\boldsymbol{\sigma}_s}_{\mathbf{f}}(\mathbf{A}_s,\mathbf{a},\mathbf{g})$,
where $s\geqslant s_0$, and\/
$\boldsymbol{\lambda}^{\boldsymbol{\sigma}}_{\mathbf{A}}\in\mathfrak
S^{\boldsymbol{\sigma}}_{\mathbf{f}}(\mathbf{A},\mathbf{a},\mathbf{g})$~---
such minimizers exist. Then every vague cluster point of the net\/
$(\boldsymbol{\lambda}^{\boldsymbol{\sigma}_{s}}_{\mathbf{A}_{s}})_{s\in
S}$ is an element of\/ $\mathfrak
S^{\boldsymbol{\sigma}}_{\mathbf{f}}(\mathbf{A},\mathbf{a},\mathbf{g})$.
Furthermore,
$\boldsymbol{\lambda}^{\boldsymbol{\sigma}_{s}}_{\mathbf{A}_{s}}\to
\boldsymbol{\lambda}^{\boldsymbol{\sigma}}_{\mathbf{A}}$ strongly,
i.e.
\begin{equation*}\label{sigma1}\lim_{s\in S}\,\|\boldsymbol{\lambda}^{\boldsymbol{\sigma}_{s}}_{\mathbf{A}_{s}}-
\boldsymbol{\lambda}^{\boldsymbol{\sigma}}_{\mathbf{A}}\|_{\mathcal
E^+(\mathbf{A}_{s_0})}=0.
\end{equation*}
\end{theorem}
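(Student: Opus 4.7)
The plan is to exploit the pre‑Hilbert structure of~$\mathcal{E}$ via the isometry~$R$ together with the consistency of~$\kappa$ and the strong completeness available from Theorem~\ref{th:strong}. The key observation will be that the net $(\boldsymbol{\lambda}^{\boldsymbol{\sigma}_s}_{\mathbf{A}_s})_{s\geqslant s_0}$ is strongly Cauchy in $\mathcal{E}^+(\mathbf{A}_{s_0})$ and hence converges strongly to any of its vague cluster points; from this both the equality of extremal values and the strong approximation of the prescribed minimizer will follow.

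First, monotonicity~(\ref{increas'}) shows that $s\mapsto G^{\boldsymbol{\sigma}_s}_{\mathbf{f}}(\mathbf{A}_s,\mathbf{a},\mathbf{g})$ is nondecreasing (as $(\mathbf{A}_s,\boldsymbol{\sigma}_s)$ is decreasing) and bounded above by $G^{\boldsymbol{\sigma}}_{\mathbf{f}}(\mathbf{A},\mathbf{a},\mathbf{g})$ (since the given $\boldsymbol{\lambda}^{\boldsymbol{\sigma}}_{\mathbf{A}}$ is admissible for every $(\mathbf{A}_s,\boldsymbol{\sigma}_s)$-problem); denote its limit by~$L$. Next, for $s_0\leqslant s_1\leqslant s_2$, the convex combination $\tfrac12(\boldsymbol{\lambda}^{\boldsymbol{\sigma}_{s_1}}_{\mathbf{A}_{s_1}}+\boldsymbol{\lambda}^{\boldsymbol{\sigma}_{s_2}}_{\mathbf{A}_{s_2}})$ has components supported in $A_i^{s_1}$, dominated by $\sigma_{s_1}^i$, and of $g_i$-mass~$a_i$, hence lies in $\mathcal{E}^+_{\boldsymbol{\sigma}_{s_1},\mathbf{f}}(\mathbf{A}_{s_1},\mathbf{a},\mathbf{g})$. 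Applying the parallelogram identity to $R\boldsymbol{\lambda}^{\boldsymbol{\sigma}_{s_1}}_{\mathbf{A}_{s_1}}$ and $R\boldsymbol{\lambda}^{\boldsymbol{\sigma}_{s_2}}_{\mathbf{A}_{s_2}}$ in~$\mathcal{E}$ exactly as in the proof of Lemma~\ref{lemma:unique} yields
\begin{equation*}
\|\boldsymbol{\lambda}^{\boldsymbol{\sigma}_{s_1}}_{\mathbf{A}_{s_1}}-\boldsymbol{\lambda}^{\boldsymbol{\sigma}_{s_2}}_{\mathbf{A}_{s_2}}\|^2_{\mathcal{E}^+(\mathbf{A}_{s_0})}\leqslant 2\bigl(G^{\boldsymbol{\sigma}_{s_2}}_{\mathbf{f}}(\mathbf{A}_{s_2},\mathbf{a},\mathbf{g})-G^{\boldsymbol{\sigma}_{s_1}}_{\mathbf{f}}(\mathbf{A}_{s_1},\mathbf{a},\mathbf{g})\bigr),
\end{equation*}
whose right-hand side tends to zero, so the net is strongly Cauchy in $\mathcal{E}^+(\mathbf{A}_{s_0})$.

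Because $\lambda^{\boldsymbol{\sigma}_s,i}_{\mathbf{A}_s}\leqslant\sigma_s^i\leqslant\sigma_{s_0}^i$, the net is vaguely bounded and thus vaguely relatively compact by Lemma~\ref{lem:vaguecomp}; fix any vague cluster point $\boldsymbol{\lambda}_*$. By the strong completeness of the admissible class (Theorem~\ref{th:strong}) combined with the consistency of~$\kappa$, the strongly Cauchy net $(\boldsymbol{\lambda}^{\boldsymbol{\sigma}_s}_{\mathbf{A}_s})$ converges strongly to $\boldsymbol{\lambda}_*$ in $\mathcal{E}^+(\mathbf{A}_{s_0})$. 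I would then verify that $\boldsymbol{\lambda}_*\in\mathcal{E}^+_{\boldsymbol{\sigma},\mathbf{f}}(\mathbf{A},\mathbf{a},\mathbf{g})$: the inclusion $\mathrm{supp}\,\lambda_*^i\subset A_i=\bigcap_s A_i^s$ follows from the decreasing intersection of closed sets plus vague convergence; the constraint $\lambda_*^i\leqslant\sigma^i$ passes to vague limits; and the mass identity $\langle g_i,\lambda_*^i\rangle=a_i$ is obtained by combining vague lower semicontinuity (Lemma~\ref{lemma:lower}, giving~$\leqslant$) with a Dini-type argument against the uniform dominant $\sigma_{s_0}^i$, together with either boundedness of~$g_i$ or its growth bound~(\ref{growth}) and the weak convergence of the $R$-images induced by strong convergence.

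The strong convergence forces $\kappa(R\boldsymbol{\lambda}^{\boldsymbol{\sigma}_s}_{\mathbf{A}_s},R\boldsymbol{\lambda}^{\boldsymbol{\sigma}_s}_{\mathbf{A}_s})\to\kappa(R\boldsymbol{\lambda}_*,R\boldsymbol{\lambda}_*)$; in Case~II, $\langle\mathbf{f},\boldsymbol{\lambda}^{\boldsymbol{\sigma}_s}_{\mathbf{A}_s}\rangle=\kappa(\zeta,R\boldsymbol{\lambda}^{\boldsymbol{\sigma}_s}_{\mathbf{A}_s})\to\langle\mathbf{f},\boldsymbol{\lambda}_*\rangle$ by weak convergence, while in Case~I, $\liminf_s\langle\mathbf{f},\boldsymbol{\lambda}^{\boldsymbol{\sigma}_s}_{\mathbf{A}_s}\rangle\geqslant\langle\mathbf{f},\boldsymbol{\lambda}_*\rangle$ by Lemma~\ref{lemma:lower}. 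Either way $G_{\mathbf{f}}(\boldsymbol{\lambda}_*)\leqslant L$, and the admissibility of $\boldsymbol{\lambda}_*$ then forces $G^{\boldsymbol{\sigma}}_{\mathbf{f}}(\mathbf{A},\mathbf{a},\mathbf{g})\leqslant G_{\mathbf{f}}(\boldsymbol{\lambda}_*)\leqslant L\leqslant G^{\boldsymbol{\sigma}}_{\mathbf{f}}(\mathbf{A},\mathbf{a},\mathbf{g})$, so $L=G^{\boldsymbol{\sigma}}_{\mathbf{f}}(\mathbf{A},\mathbf{a},\mathbf{g})$ and $\boldsymbol{\lambda}_*\in\mathfrak{S}^{\boldsymbol{\sigma}}_{\mathbf{f}}(\mathbf{A},\mathbf{a},\mathbf{g})$. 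Finally, rerunning the parallelogram estimate on $\boldsymbol{\lambda}^{\boldsymbol{\sigma}_s}_{\mathbf{A}_s}$ and the prescribed $\boldsymbol{\lambda}^{\boldsymbol{\sigma}}_{\mathbf{A}}$---whose midpoint is admissible for $(\mathbf{A}_s,\boldsymbol{\sigma}_s)$---gives
\begin{equation*}
\|\boldsymbol{\lambda}^{\boldsymbol{\sigma}_s}_{\mathbf{A}_s}-\boldsymbol{\lambda}^{\boldsymbol{\sigma}}_{\mathbf{A}}\|^2_{\mathcal{E}^+(\mathbf{A}_{s_0})}\leqslant 2\bigl(G^{\boldsymbol{\sigma}}_{\mathbf{f}}(\mathbf{A},\mathbf{a},\mathbf{g})-G^{\boldsymbol{\sigma}_s}_{\mathbf{f}}(\mathbf{A}_s,\mathbf{a},\mathbf{g})\bigr)\to 0,
\end{equation*}
which is the claimed strong convergence. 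The main obstacle is the mass identity $\langle g_i,\lambda_*^i\rangle=a_i$: without it, $\boldsymbol{\lambda}_*$ fails to be admissible and the chain $G^{\boldsymbol{\sigma}}\leqslant L\leqslant G^{\boldsymbol{\sigma}}$ collapses. Resolving it is precisely where conditions~(a)--(c) of Theorem~\ref{exist} enter essentially, ruling out any escape of $g_i$-mass at infinity along the minimizing net.
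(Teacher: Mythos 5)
Your proposal follows essentially the same route as the paper's proof: monotonicity of the extremal values along the decreasing net $(\mathbf{A}_s,\boldsymbol{\sigma}_s)$, a convexity estimate showing that the minimizers form a strong Cauchy net, strong completeness from Theorem~\ref{th:strong} to produce a vague cluster point to which the net converges strongly, preservation of the $g_i$-masses in the vague limit, and lower semicontinuity of the external-field term (with the Case~I/Case~II split). Your final parallelogram estimate against the prescribed $\boldsymbol{\lambda}^{\boldsymbol{\sigma}}_{\mathbf{A}}$ is a slightly more direct way to obtain the last display than the paper's implicit appeal to the equivalence of minimizers, and your constant $2$ (via the admissible midpoint) versus the paper's constant $1$ (via the first variation as $t\to0$) is immaterial.

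The one step whose described mechanism would not survive being written out is the mass identity $\langle g_i,\lambda_*^i\rangle=a_i$. A ``Dini-type argument against the uniform dominant $\sigma_{s_0}^i$'' fails in general: the constraint is only required to satisfy $\langle g_i,\sigma^i_{A_i^M}\rangle>a_i$ and may have infinite $g_i$-mass, so domination by $\sigma_{s_0}^i$ gives no uniform control of the tails $\langle g_i\chi_{K^c},\lambda_s^i\rangle$ as $K\uparrow A_i$. The actual mechanism --- and the reason conditions (a)--(c) of Theorem~\ref{exist} appear as hypotheses here --- is Lemma~\ref{lemma:exist'}: one bounds $g_i\chi_{K^c}$ nearly everywhere by $\kappa(\cdot,\tau_i)^{1/r_i}\,\kappa(\cdot,\theta_{K^c})^{1/q_i}$ using the growth bound~(\ref{growth}) and the interior equilibrium measure of $K^c$ (which exists because $C(A_i)<\infty$), and then applies the H\"older and Cauchy--Schwarz inequalities together with $\|\theta_{K^c}\|\to0$, the latter coming from consistency of the kernel. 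Since that lemma is already established in the paper, you may simply cite it in place of the Dini argument, and the rest of your proof stands.
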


\begin{corollary}\label{contcor1}Under the assumptions of Theorem~{\rm\ref{cor:sigma}}, if,
moreover, the kernel~$\kappa$ is strictly positive definite\/
{\rm(}hence, perfect\/{\rm)} and all~$A_i^{s_0}$, $i\in I$, are
mutually disjoint, then the\/ {\rm(}unique\/{\rm)} minimizer
$\boldsymbol{\lambda}^{\boldsymbol{\sigma}_{s}}_{\mathbf{A}_{s}}\in\mathfrak
S^{\boldsymbol{\sigma}_s}_{\mathbf{f}}(\mathbf{A}_s,\mathbf{a},\mathbf{g})$,
where $s\geqslant s_0$, approaches the\/ {\rm(}unique\/{\rm)}
minimizer
$\boldsymbol{\lambda}^{\boldsymbol{\sigma}}_{\mathbf{A}}\in\mathfrak
S^{\boldsymbol{\sigma}}_{\mathbf{f}}(\mathbf{A},\mathbf{a},\mathbf{g})$
both vaguely and strongly.
\end{corollary}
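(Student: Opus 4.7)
My plan is to combine three facts already in the paper: the uniqueness Lemma~\ref{lemma:unique}, the strong-convergence statement in Theorem~\ref{cor:sigma}, and the vague relative compactness from Lemma~\ref{lem:vaguecomp}. No new estimate is required: the content of the corollary is essentially that, once uniqueness of the minimizer is secured, the ``cluster-point'' assertion in Theorem~\ref{cor:sigma} upgrades to vague convergence of the whole net.

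First I would settle \emph{uniqueness}. For every $s\geqslant s_0$ the inclusions $A_i^s\subset A_i^{s_0}$ and $A_i\subset A_i^{s_0}$ show that both $(A_i^s)_{i\in I}$ and $(A_i)_{i\in I}$ are families of mutually disjoint sets. Since $\kappa$ is strictly positive definite, Theorem~\ref{lemma:semimetric} says that $\|\cdot\|_{\mathcal{E}^+(\mathbf{A}_s)}$ and $\|\cdot\|_{\mathcal{E}^+(\mathbf{A})}$ are genuine metrics, so Lemma~\ref{lemma:unique} implies that each of the classes $\mathfrak{S}^{\boldsymbol{\sigma}_s}_{\mathbf{f}}(\mathbf{A}_s,\mathbf{a},\mathbf{g})$ and $\mathfrak{S}^{\boldsymbol{\sigma}}_{\mathbf{f}}(\mathbf{A},\mathbf{a},\mathbf{g})$ contains at most one element; by Theorem~\ref{cor:sigma} they are nonempty, which justifies the word ``unique'' in the statement.

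The strong convergence $\boldsymbol{\lambda}^{\boldsymbol{\sigma}_s}_{\mathbf{A}_s}\to\boldsymbol{\lambda}^{\boldsymbol{\sigma}}_{\mathbf{A}}$ in $\mathcal{E}^+(\mathbf{A}_{s_0})$ is then literally the last assertion of Theorem~\ref{cor:sigma}, applied to the now uniquely determined net and its limit. For the vague convergence, observe that for $s\geqslant s_0$ we have $\boldsymbol{\lambda}^{\boldsymbol{\sigma}_s}_{\mathbf{A}_s}\leqslant\boldsymbol{\sigma}_s\leqslant\boldsymbol{\sigma}_{s_0}$ and $A_i^s\subset A_i^{s_0}$, so the entire net lies in $\mathfrak{M}^+_{\boldsymbol{\sigma}_{s_0}}(\mathbf{A}_{s_0})$. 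This set is vaguely bounded, since for any $\boldsymbol{\mu}$ in it and any $\varphi\in\mathrm{C}_0(\mathrm{X})$ one has $|\mu^i(\varphi)|\leqslant\sigma^i_{s_0}(|\varphi|)<\infty$, and hence, by Lemma~\ref{lem:vaguecomp}, it is vaguely relatively compact. By the second conclusion of Theorem~\ref{cor:sigma}, every vague cluster point of the net belongs to $\mathfrak{S}^{\boldsymbol{\sigma}}_{\mathbf{f}}(\mathbf{A},\mathbf{a},\mathbf{g})=\{\boldsymbol{\lambda}^{\boldsymbol{\sigma}}_{\mathbf{A}}\}$; since in the Hausdorff space $\mathfrak{M}^+(\mathbf{A}_{s_0})$ any net contained in a relatively compact set with a unique cluster point converges to that point, the vague convergence follows. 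There is no substantial obstacle here; the only point worth flagging is that one must explicitly secure vague relative compactness of the net before upgrading ``unique cluster point'' to ``vague limit.''
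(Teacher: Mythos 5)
Your proposal is correct and follows essentially the route the paper intends: uniqueness via Lemma~\ref{lemma:unique} together with strict positive definiteness and mutual disjointness, strong convergence directly from Theorem~\ref{cor:sigma}, and vague convergence by combining the cluster-point assertion of Theorem~\ref{cor:sigma} with vague relative compactness and the Hausdorff property. The only remark worth adding is that the final step could be obtained in one stroke from Theorem~\ref{th:strong}(ii) applied in $\mathcal E^+(\mathbf{A}_{s_0},\leqslant\!\mathbf{a},\mathbf{g})$, whose proof is precisely the ``unique cluster point in a compact set'' argument you spelled out.
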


In the rest of Sec.~\ref{appr1}, let $\mathbf{A}$ and $\mathbf{g}$
satisfy all the conditions of Theorem~{\rm\ref{exist}}. We proceed
by analyzing continuity properties of
$G^{\boldsymbol{\sigma}}_{\mathbf{f}}(\mathbf{A},\mathbf{a},\mathbf{g})$
and~$\boldsymbol{\lambda}^{\boldsymbol{\sigma}}_{\mathbf{A}}$ under
exhaustion of~$\mathbf{A}$ by compact~$\mathbf{K}$.

\begin{theorem}\label{cor:cont} There exists a net
$(\beta^*_{\mathbf{K}})_{\mathbf{K}\in
\{\mathbf{K}\}_{\mathbf{A}}}\subset(1,\infty)$ decreasing to~$1$ and
such that, for any $\beta_{\mathbf{K}}\in[1,\beta^*_{\mathbf{K}}]$,
\begin{equation}
G^{\boldsymbol{\sigma}}_{\mathbf{f}}(\mathbf{A},\mathbf{a},\mathbf{g})=\lim_{\mathbf{K}\uparrow\mathbf{A}}\,
G^{\beta_{\mathbf{K}}\boldsymbol{\sigma}_{\mathbf{K}}}_{\mathbf{f}}(\mathbf{K},\mathbf{a},\mathbf{g}),\label{contnew}
\end{equation}
where $\boldsymbol{\sigma}_{\mathbf{K}}:=(\sigma^i_{K_i})_{i\in I}$.
Fix arbitrary
$\boldsymbol{\lambda}^{\beta_{\mathbf{K}}\boldsymbol{\sigma}_{\mathbf{K}}}_{\mathbf{K}}\in\mathfrak
S^{\beta_{\mathbf{K}}\boldsymbol{\sigma}_{\mathbf{K}}}_{\mathbf{f}}(\mathbf{K},\mathbf{a},\mathbf{g})$,
where $\mathbf{K}\in\{\mathbf{K}\}_{\mathbf{A}}$ is sufficiently
large, and
$\boldsymbol{\lambda}^{\boldsymbol{\sigma}}_{\mathbf{A}}\in\mathfrak
S^{\boldsymbol{\sigma}}_{\mathbf{f}}(\mathbf{A},\mathbf{a},\mathbf{g})$~---
such minimizers exist. Then every vague cluster point of the net
$(\boldsymbol{\lambda}^{\beta_{\mathbf{K}}\boldsymbol{\sigma}_{\mathbf{K}}}_{\mathbf{K}})_{\mathbf{K}\in
\{\mathbf{K}\}_{\mathbf{A}}}$ is an element of $\mathfrak
S^{\boldsymbol{\sigma}}_{\mathbf{f}}(\mathbf{A},\mathbf{a},\mathbf{g})$.
Furthermore,
$\boldsymbol{\lambda}^{\beta_{\mathbf{K}}\boldsymbol{\sigma}_{\mathbf{K}}}_{\mathbf{K}}\to
\boldsymbol{\lambda}^{\boldsymbol{\sigma}}_{\mathbf{A}}$ strongly,
i.e.
\begin{equation*}\label{sigma2}\lim_{\mathbf{K}\uparrow\mathbf{A}}\,
\|\boldsymbol{\lambda}^{\beta_{\mathbf{K}}\boldsymbol{\sigma}_{\mathbf{K}}}_{\mathbf{K}}-
\boldsymbol{\lambda}^{\boldsymbol{\sigma}}_{\mathbf{A}}\|_{\mathcal
E^+(\mathbf A)}=0.
\end{equation*}
\end{theorem}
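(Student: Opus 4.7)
The plan is to combine the compact-approximation scheme of Lemma~\ref{exhaustion}, the parallelogram-identity argument from the proof of Lemma~\ref{lemma:unique}, and the consistency of $\kappa$ (through the strong completeness announced in Theorem~\ref{th:strong}). Theorem~\ref{exist} furnishes a minimizer $\boldsymbol{\lambda}:=\boldsymbol{\lambda}^{\boldsymbol{\sigma}}_{\mathbf{A}}$, and applied to this $\boldsymbol{\lambda}$ the uniform bound~(\ref{unif2new}) from the proof of Lemma~\ref{exhaustion} motivates setting
\[
\beta^*_{\mathbf{K}}:=\max\Bigl\{1+\delta_{\mathbf{K}},\ \sup_{i\in I}\tfrac{a_i}{\langle g_i,\lambda^i_{K_i}\rangle}\Bigr\},
\]
with $\delta_{\mathbf{K}}\downarrow 0$ a vanishing safety margin ensuring $\beta^*_{\mathbf{K}}\in(1,\infty)$; estimate~(\ref{unif2new}) then forces $\beta^*_{\mathbf{K}}\downarrow 1$, and by construction $\widehat{\boldsymbol{\lambda}}_{\mathbf{K}}\in\mathcal{E}^+_{\beta^*_{\mathbf{K}}\boldsymbol{\sigma}_{\mathbf{K}},\mathbf{f}}(\mathbf{K},\mathbf{a},\mathbf{g})$. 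Formulas~(\ref{w'new})--(\ref{wwnew}) combined with~(\ref{seminorm}) and Corollary~\ref{muten} give $\widehat{\boldsymbol{\lambda}}_{\mathbf{K}}\to\boldsymbol{\lambda}$ strongly in $\mathcal{E}^+(\mathbf{A})$ and $G_{\mathbf{f}}(\widehat{\boldsymbol{\lambda}}_{\mathbf{K}})\to G^{\boldsymbol{\sigma}}_{\mathbf{f}}(\mathbf{A},\mathbf{a},\mathbf{g})$.

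To establish~(\ref{contnew}) for $\beta_{\mathbf{K}}\in[1,\beta^*_{\mathbf{K}}]$, I sandwich $G^{\beta_{\mathbf{K}}\boldsymbol{\sigma}_{\mathbf{K}}}_{\mathbf{f}}(\mathbf{K},\mathbf{a},\mathbf{g})$ between two quantities with the same limit. Nesting of admissible classes yields $G^{\beta^*_{\mathbf{K}}\boldsymbol{\sigma}_{\mathbf{K}}}_{\mathbf{f}}\leqslant G^{\beta_{\mathbf{K}}\boldsymbol{\sigma}_{\mathbf{K}}}_{\mathbf{f}}$, and admissibility of $\widehat{\boldsymbol{\lambda}}_{\mathbf{K}}$ gives $G^{\beta^*_{\mathbf{K}}\boldsymbol{\sigma}_{\mathbf{K}}}_{\mathbf{f}}\leqslant G_{\mathbf{f}}(\widehat{\boldsymbol{\lambda}}_{\mathbf{K}})\to G^{\boldsymbol{\sigma}}_{\mathbf{f}}(\mathbf{A},\mathbf{a},\mathbf{g})$. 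The matching upper bound on $G^{\beta_{\mathbf{K}}\boldsymbol{\sigma}_{\mathbf{K}}}_{\mathbf{f}}$ is produced, for $\mathbf{K}$ large enough, by exhibiting an admissible test measure $\boldsymbol{\lambda}_{K}+\boldsymbol{\eta}_{\mathbf{K}}$, where $\boldsymbol{\lambda}_{K}$ is the trace of $\boldsymbol{\lambda}$ on $\mathbf{K}$ and $\boldsymbol{\eta}_{\mathbf{K}}\leqslant\beta_{\mathbf{K}}\boldsymbol{\sigma}_{\mathbf{K}}-\boldsymbol{\lambda}_{K}$ is chosen to carry $g_i$-weight $a_i-\langle g_i,\lambda^i_{K_i}\rangle\to 0$. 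Dually, for the reverse lower bound $G^{\boldsymbol{\sigma}}_{\mathbf{f}}\leqslant G^{\beta_{\mathbf{K}}\boldsymbol{\sigma}_{\mathbf{K}}}_{\mathbf{f}}+o(1)$, any admissible $\boldsymbol{\mu}$ for $(\mathbf{K},\beta_{\mathbf{K}}\boldsymbol{\sigma}_{\mathbf{K}})$ is transported to $\boldsymbol{\mu}/\beta_{\mathbf{K}}+\widetilde{\boldsymbol{\eta}}_{\mathbf{K}}\in\mathcal{E}^+_{\boldsymbol{\sigma},\mathbf{f}}(\mathbf{A},\mathbf{a},\mathbf{g})$ with $\widetilde{\boldsymbol{\eta}}_{\mathbf{K}}$ carrying vanishing $g_i$-mass. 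The standing assumptions~(\ref{s}) and~(\ref{bou}) are invoked to estimate the infinite sum of pairwise interactions $\kappa(\widetilde{\boldsymbol{\eta}}_{\mathbf{K}},\widetilde{\boldsymbol{\eta}}_{\mathbf{K}})+2\kappa(\boldsymbol{\mu}/\beta_{\mathbf{K}},\widetilde{\boldsymbol{\eta}}_{\mathbf{K}})$ and its analog as $o(1)$, yielding~(\ref{contnew}).

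Once~(\ref{contnew}) holds, strong convergence follows from the parallelogram identity. Both $\widehat{\boldsymbol{\lambda}}_{\mathbf{K}}$ and any minimizer $\boldsymbol{\lambda}_{\mathbf{K}}:=\boldsymbol{\lambda}^{\beta_{\mathbf{K}}\boldsymbol{\sigma}_{\mathbf{K}}}_{\mathbf{K}}$ (existing by Corollary~\ref{cor:exist} once $\mathbf{K}$ is large enough) lie in $\mathcal{E}^+_{\beta^*_{\mathbf{K}}\boldsymbol{\sigma}_{\mathbf{K}},\mathbf{f}}(\mathbf{K},\mathbf{a},\mathbf{g})$, so does their midpoint, and arguing exactly as in the proof of Lemma~\ref{lemma:unique} yields
\[
0\leqslant\|\widehat{\boldsymbol{\lambda}}_{\mathbf{K}}-\boldsymbol{\lambda}_{\mathbf{K}}\|^2_{\mathcal{E}^+(\mathbf{A})}\leqslant 2G_{\mathbf{f}}(\widehat{\boldsymbol{\lambda}}_{\mathbf{K}})+2G^{\beta_{\mathbf{K}}\boldsymbol{\sigma}_{\mathbf{K}}}_{\mathbf{f}}(\mathbf{K},\mathbf{a},\mathbf{g})-4G^{\beta^*_{\mathbf{K}}\boldsymbol{\sigma}_{\mathbf{K}}}_{\mathbf{f}}(\mathbf{K},\mathbf{a},\mathbf{g})\to 0,
\]
which together with $\widehat{\boldsymbol{\lambda}}_{\mathbf{K}}\to\boldsymbol{\lambda}$ gives $\boldsymbol{\lambda}_{\mathbf{K}}\to\boldsymbol{\lambda}$ strongly in $\mathcal{E}^+(\mathbf{A})$. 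Vague relative compactness of $(\boldsymbol{\lambda}_{\mathbf{K}})$ follows from Lemma~\ref{lem:vaguecomp} (using $\boldsymbol{\lambda}_{\mathbf{K}}\leqslant\beta^*_{\mathbf{K}_0}\boldsymbol{\sigma}$ for $\mathbf{K}\geqslant\mathbf{K}_0$); any vague cluster point must coincide with $\boldsymbol{\lambda}$ modulo $\|\cdot\|_{\mathcal{E}^+(\mathbf{A})}$ by consistency~(C$_1$), hence belong to $\mathfrak{S}^{\boldsymbol{\sigma}}_{\mathbf{f}}(\mathbf{A},\mathbf{a},\mathbf{g})$. The main obstacle is the two-sided sandwich of paragraph two --- specifically, engineering the paddings $\boldsymbol{\eta}_{\mathbf{K}}$ and $\widetilde{\boldsymbol{\eta}}_{\mathbf{K}}$ so that~(\ref{s}) and~(\ref{bou}) force the resulting corrections to $G_{\mathbf{f}}$ to be $o(1)$ in the presence of infinitely many plates, where no finite-condenser shortcut is available.
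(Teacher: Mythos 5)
Your overall architecture (rescale traces via Lemma~\ref{exhaustion}, sandwich the extremal values, then a parallelogram/convexity argument plus Theorem~\ref{th:strong} for the strong and vague conclusions) is close in spirit to the paper's, and your final parallelogram estimate comparing $\widehat{\boldsymbol{\lambda}}_{\mathbf{K}}$ with $\boldsymbol{\lambda}^{\beta_{\mathbf{K}}\boldsymbol{\sigma}_{\mathbf{K}}}_{\mathbf{K}}$ inside the convex class $\mathcal E^+_{\beta^*_{\mathbf{K}}\boldsymbol{\sigma}_{\mathbf{K}},\mathbf{f}}(\mathbf{K},\mathbf{a},\mathbf{g})$ is sound \emph{conditionally on}~(\ref{contnew}). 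But the proof of~(\ref{contnew}) itself --- which you correctly identify as the main obstacle --- is not actually carried out, and the route you sketch for it fails. The critical case is the endpoint $\beta_{\mathbf{K}}=1$: you propose the test measure $\boldsymbol{\lambda}_{\mathbf{K}}+\boldsymbol{\eta}_{\mathbf{K}}$ with $\boldsymbol{\eta}_{\mathbf{K}}\leqslant\beta_{\mathbf{K}}\boldsymbol{\sigma}_{\mathbf{K}}-\boldsymbol{\lambda}_{\mathbf{K}}$ carrying $g_i$-weight $a_i-\langle g_i,\lambda^i_{K_i}\rangle$, but such a padding exists only if $\beta_{\mathbf{K}}\langle g_i,\sigma^i_{K_i}\rangle\geqslant a_i$, which can fail for \emph{every} compact $K_i$ when $\beta_{\mathbf{K}}=1$ (take $\sigma^i=\lambda^i$ with noncompact support and $\langle g_i,\sigma^i\rangle=a_i$: then $\mathcal E^+_{\boldsymbol{\sigma}_{\mathbf{K}}}(\mathbf{K},\mathbf{a},\mathbf{g})$ is empty for all $\mathbf{K}$). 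Moreover, even when a padding exists, its contribution to $G_{\mathbf{f}}$ is not $o(1)$ merely because its $g_i$-mass vanishes: the constraint $\boldsymbol{\sigma}$ is not assumed to have finite energy, so $\|\eta^i_{\mathbf{K}}\|^2$ and $\langle f_i,\eta^i_{\mathbf{K}}\rangle$ are uncontrolled; conditions~(\ref{s}) and~(\ref{bou}) bound only the cross-interactions of oppositely signed plates, not these self-terms. This is precisely why the paper never works with the constraint $\boldsymbol{\sigma}_{\mathbf{K}}$ directly: it proves $G_{\mathbf{f}}(\boldsymbol{\mu})=\lim_{\mathbf{K}}G_{\mathbf{f}}(\widehat{\boldsymbol{\mu}}_{\mathbf{K}})$ with $\widehat{\boldsymbol{\mu}}_{\mathbf{K}}$ admissible only for the \emph{inflated} constraint $(1+\varepsilon)\boldsymbol{\sigma}_{\mathbf{K}}$, and then removes $\varepsilon$ by a double-limit interchange resting on Theorem~\ref{cor:sigma} (continuity of $G$ and of minimizers under a decreasing net of constraints), applied both to $\mathbf{A}$ and to each fixed $\mathbf{K}$. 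Theorem~\ref{cor:sigma} is the key prior result your argument never invokes, and without it (or a substitute) the passage from $(1+\varepsilon)\boldsymbol{\sigma}_{\mathbf{K}}$ down to $\beta_{\mathbf{K}}\boldsymbol{\sigma}_{\mathbf{K}}$ with $\beta_{\mathbf{K}}\in[1,\beta^*_{\mathbf{K}}]$ is unjustified; it is also how the paper actually \emph{constructs} $\beta^*_{\mathbf{K}}$ (so that the minimizers and extremal values for $\beta_{\mathbf{K}}\boldsymbol{\sigma}_{\mathbf{K}}$ and $\boldsymbol{\sigma}_{\mathbf{K}}$ differ by $o(1)$), rather than tying $\beta^*_{\mathbf{K}}$ to a particular minimizer as you do.

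A secondary gap is the last step: knowing that a vague cluster point $\boldsymbol{\gamma}$ of $(\boldsymbol{\lambda}^{\beta_{\mathbf{K}}\boldsymbol{\sigma}_{\mathbf{K}}}_{\mathbf{K}})$ coincides with $\boldsymbol{\lambda}^{\boldsymbol{\sigma}}_{\mathbf{A}}$ modulo the seminorm does \emph{not} put it in $\mathfrak S^{\boldsymbol{\sigma}}_{\mathbf{f}}(\mathbf{A},\mathbf{a},\mathbf{g})$: equivalence in $\mathcal E^+(\mathbf{A})$ controls neither the constraints $\langle g_i,\gamma^i\rangle=a_i$ nor, in Case~I, the value of $\langle\mathbf{f},\boldsymbol{\gamma}\rangle$. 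One needs Lemma~\ref{lemma:exist'} (under hypotheses (a)--(c) of Theorem~\ref{exist}) to rule out loss of $g_i$-mass at infinity in the vague limit, and the Case~I/Case~II analysis from the end of the proof of Theorem~\ref{cor:sigma} (lower semicontinuity of $\langle f_i,\cdot\rangle$, respectively Lemma~\ref{caseii}) to conclude $G_{\mathbf{f}}(\boldsymbol{\gamma})\leqslant G^{\boldsymbol{\sigma}}_{\mathbf{f}}(\mathbf{A},\mathbf{a},\mathbf{g})$. Finally, your definition of $\beta^*_{\mathbf{K}}$ as $\sup_{i\in I}a_i/\langle g_i,\lambda^i_{K_i}\rangle$ can be infinite for small $\mathbf{K}$ (components $K_i$ missing ${\rm supp}\,\lambda^i$) and its finiteness uniformly over the infinitely many indices not yet dominating $\mathbf{K}_0(\varepsilon)$ needs justification; this is repairable but should be addressed.
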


\begin{corollary}\label{contcor2}With the notation of Theorem~{\rm\ref{cor:cont}}, if
the kernel~$\kappa$ is strictly positive definite\/ {\rm(}hence,
perfect\/{\rm)} and all~$A_i$, $i\in I$, are mutually disjoint, then
the\/ {\rm(}unique\/{\rm)} minimizer
$\boldsymbol{\lambda}^{\beta_{\mathbf{K}}\boldsymbol{\sigma}_{\mathbf{K}}}_{\mathbf{K}}\in\mathfrak
S^{\beta_{\mathbf{K}}\boldsymbol{\sigma}_{\mathbf{K}}}_{\mathbf{f}}(\mathbf{K},\mathbf{a},\mathbf{g})$,
where $\mathbf{K}\in \{\mathbf{K}\}_{\mathbf{A}}$, approaches the\/
{\rm(}unique\/{\rm)} minimizer
$\boldsymbol{\lambda}^{\boldsymbol{\sigma}}_{\mathbf{A}}\in\mathfrak
S^{\boldsymbol{\sigma}}_{\mathbf{f}}(\mathbf{A},\mathbf{a},\mathbf{g})$
both vaguely and strongly.
\end{corollary}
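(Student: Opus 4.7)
The plan is to assemble the conclusion from three ingredients already available: the uniqueness statement in Lemma~\ref{lemma:unique}, the strong convergence and cluster-point characterization from Theorem~\ref{cor:cont}, and the general fact (Lemma~\ref{lem:vaguecomp}) that vaguely bounded families of vector measures are vaguely relatively compact. In a Hausdorff topological space, a relatively compact net with at most one cluster point must actually converge to that point, and this is the principle I would exploit for the vague convergence.

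\textbf{Step 1 (uniqueness).} Because $\kappa$ is strictly positive definite and the $A_i$ are mutually disjoint, the discussion following Lemma~\ref{lemma:unique} yields that $\mathfrak S^{\boldsymbol{\sigma}}_{\mathbf{f}}(\mathbf{A},\mathbf{a},\mathbf{g})$ is a singleton $\{\boldsymbol{\lambda}^{\boldsymbol{\sigma}}_{\mathbf{A}}\}$. The same reasoning applies to each compact sub-condenser $\mathbf{K}\leqslant\mathbf{A}$, since the $K_i\subset A_i$ inherit mutual disjointness; hence $\boldsymbol{\lambda}^{\beta_{\mathbf{K}}\boldsymbol{\sigma}_{\mathbf{K}}}_{\mathbf{K}}$ is itself unique for every sufficiently large $\mathbf{K}$.

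\textbf{Step 2 (strong convergence).} This is immediate from the last assertion of Theorem~\ref{cor:cont}, so nothing beyond invoking that theorem is required.

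\textbf{Step 3 (vague convergence).} Here is where the main work lies. I would first verify that the net $(\boldsymbol{\lambda}^{\beta_{\mathbf{K}}\boldsymbol{\sigma}_{\mathbf{K}}}_{\mathbf{K}})_{\mathbf{K}\in\{\mathbf{K}\}_{\mathbf{A}}}$ is vaguely bounded. Indeed, its $i$-th component is dominated by $\beta_{\mathbf{K}}\sigma^i_{K_i}\leqslant\beta_{\mathbf{K}_0}^{*}\sigma^i$ for all $\mathbf{K}$ following some fixed $\mathbf{K}_0$, because $(\beta^*_{\mathbf{K}})$ decreases and $\beta_{\mathbf{K}}\in[1,\beta^*_{\mathbf{K}}]$; hence $|\langle\varphi,\lambda^{\beta_{\mathbf{K}}\boldsymbol{\sigma}_{\mathbf{K}}}_{\mathbf{K},i}\rangle|\leqslant \beta_{\mathbf{K}_0}^{*}\sigma^i(|\varphi|)<\infty$ for every $\varphi\in\mathrm C_0(\mathrm X)$. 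Lemma~\ref{lem:vaguecomp} then gives vague relative compactness. By Theorem~\ref{cor:cont}, every vague cluster point of the net belongs to $\mathfrak S^{\boldsymbol{\sigma}}_{\mathbf{f}}(\mathbf{A},\mathbf{a},\mathbf{g})$, and by Step~1 this set contains only $\boldsymbol{\lambda}^{\boldsymbol{\sigma}}_{\mathbf{A}}$. Thus the net has a unique vague cluster point in the Hausdorff space $\mathfrak M^+(\mathbf{A})$, and a standard argument (any subnet of a relatively compact net has a further subnet converging to this unique cluster point) forces vague convergence of the whole net to $\boldsymbol{\lambda}^{\boldsymbol{\sigma}}_{\mathbf{A}}$.

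The hard part is really only the bookkeeping for vague relative compactness under the decreasing choice of $\beta_{\mathbf{K}}$; once that is granted, the proof is a short combination of uniqueness with the Hausdorff unique-cluster-point principle, and no fresh analytic input beyond the previous theorems is needed.
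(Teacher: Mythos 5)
Your argument is correct and is essentially the intended one: the paper states this corollary without a separate proof, and the natural derivation is exactly what you give — uniqueness of the minimizers from Lemma~\ref{lemma:unique} and the remark following it (mutual disjointness of the $K_i\subset A_i$ being inherited), strong convergence and the cluster-point description from Theorem~\ref{cor:cont}, and then the unique-cluster-point principle in a vaguely compact Hausdorff set to upgrade to vague convergence. The only available shortcut is that the vague convergence can be read off directly from Theorem~\ref{th:strong}, (ii) and (iii): the whole net lies in $\mathcal E^+_{\boldsymbol{\sigma}'}(\mathbf{A},\leqslant\!\mathbf{a},\mathbf{g})$ with $\boldsymbol{\sigma}':=\beta^*_{\mathbf{K}_0}\boldsymbol{\sigma}$, where the strong topology is finer than the vague one under the present hypotheses, which spares the boundedness bookkeeping of your Step~3 — but since that part of Theorem~\ref{th:strong} is itself proved by precisely your compactness-plus-unique-cluster-point argument, the two routes coincide in substance.
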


\begin{remark}
The value
$G^{\boldsymbol{\sigma}}_{\mathbf{f}}(\mathbf{A},\mathbf{a},\mathbf{g})$
remains unchanged if $\mathcal
E^+_{\boldsymbol{\sigma},\mathbf{f}}(\mathbf{A},\mathbf{a},\mathbf{g})$
in its definition is replaced by the class of all
$\boldsymbol{\mu}\in\mathcal
E^+_{\boldsymbol{\sigma},\mathbf{f}}(\mathbf{A},\mathbf{a},\mathbf{g})$
such that ${\rm supp}\,\mu^i$, $i\in I$, are {\em compact\/}.
Indeed, this is concluded from (\ref{contnew}) with
$\beta_{\mathbf{K}}=1$ for all
$\mathbf{K}\in\{\mathbf{K}\}_{\mathbf{A}}$.\end{remark}

The proofs of Theorems~\ref{exist}, \ref{cor:sigma}
and~\ref{cor:cont}, to be given in Sections~\ref{sec:proof.th.str},
\ref{sec:proof.cor:sigma} and~\ref{sec:proof.th.cont} below (see
also Sec.~\ref{sec:extremal} for some crucial auxiliary notions and
results), are based on a theorem on the strong completeness of
proper subspaces of the semimetric space~$\mathcal E^+(\mathbf{A})$,
which is a subject of Sec.~\ref{sec:11}.

\section{Strong completeness of classes of vector measures}\label{sec:11}

Recall that we are working under the standing assumptions, stated
in~Sec.~\ref{sec:standing}. Write
\begin{equation*} \mathfrak
M^+(\mathbf{A},\leqslant\!\mathbf{a},\mathbf{g}):=\bigl\{\boldsymbol{\mu}\in\mathfrak
M^+(\mathbf{A}):\quad\langle g_i,\mu^i\rangle\leqslant a_i\text{ \
for all \ } i\in I\bigr\},
\end{equation*}
\[\mathfrak
M^+_{\boldsymbol{\sigma}}(\mathbf{A},\leqslant\!\mathbf{a},\mathbf{g}):=\mathfrak
M^+(\mathbf{A},\leqslant\!\mathbf{a},\mathbf{g})\cap\mathfrak
M^+_{\boldsymbol{\sigma}}(\mathbf{A}),\]
\[\mathcal
E^+(\mathbf{A},\leqslant\!\mathbf{a},\mathbf{g}):=\mathfrak
M^+(\mathbf{A},\leqslant\!\mathbf{a},\mathbf{g})\cap\mathcal
E^+(\mathbf{A}),\] and \[\mathcal
E^+_{\boldsymbol{\sigma}}(\mathbf{A},\leqslant\!\mathbf{a},\mathbf{g}):=\mathcal
E^+(\mathbf{A},\leqslant\!\mathbf{a},\mathbf{g})\cap\mathfrak
M^+_{\boldsymbol{\sigma}}(\mathbf{A}).\] Our next purpose is to show
that $\mathcal E^+(\mathbf{A},\leqslant\!\mathbf{a},\mathbf{g})$ and
$\mathcal
E^+_{\boldsymbol{\sigma}}(\mathbf{A},\leqslant\!\mathbf{a},\mathbf{g})$,
treated as topological subspaces of the semimetric space $\mathcal
E^+(\mathbf{A})$, are {\em strongly complete\/}.

\subsection{Auxiliary assertions}

\begin{lemma}\label{Mbounded}
$\mathfrak M^+(\mathbf{A},\leqslant\!\mathbf{a},\mathbf{g})$ and
$\mathfrak
M^+_{\boldsymbol{\sigma}}(\mathbf{A},\leqslant\!\mathbf{a},\mathbf{g})$
are vaguely bounded and, hence, they are vaguely compact.\end{lemma}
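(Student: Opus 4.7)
The plan is to split the claim into vague boundedness (which will give vague relative compactness via Lemma~\ref{lem:vaguecomp}) and vague closedness in $\mathfrak M^+(\mathbf{A})$, and then combine the two to obtain genuine vague compactness. Closedness is genuinely needed in my reading, since Lemma~\ref{lem:vaguecomp} delivers only relative compactness.

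For vague boundedness, I would fix $i\in I$ and $\varphi\in\mathrm C_0(\mathrm X)$, and set $K:=\mathrm{supp}\,\varphi$. Since $g_i:\mathrm X\to(0,\infty)$ is continuous and $K$ is compact, $m_i:=\min_{x\in K}g_i(x)>0$. Using $g_i\geqslant m_i$ on $K$ together with $\mu^i\geqslant 0$, any $\boldsymbol{\mu}$ in either class satisfies
\[
|\mu^i(\varphi)|\leqslant\|\varphi\|_\infty\,\mu^i(K)\leqslant\|\varphi\|_\infty m_i^{-1}\int_K g_i\,d\mu^i\leqslant\|\varphi\|_\infty m_i^{-1}\langle g_i,\mu^i\rangle\leqslant\|\varphi\|_\infty m_i^{-1}a_i.
\]
Hence both classes are vaguely bounded, and Lemma~\ref{lem:vaguecomp} yields vague relative compactness.

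For vague closedness, suppose $(\boldsymbol{\mu}_s)_{s\in S}$ lies in one of the classes and converges vaguely to some $\boldsymbol{\mu}_0\in\mathfrak M^+(\mathbf{A})$; then $\mu_s^i\to\mu_0^i$ vaguely for each $i\in I$. Continuity and strict positivity of $g_i$ make it an element of $\mathrm\Phi(\mathrm X)$, so Lemma~\ref{lemma:lower} gives
\[
\langle g_i,\mu_0^i\rangle\leqslant\liminf_{s\in S}\,\langle g_i,\mu_s^i\rangle\leqslant a_i,
\]
preserving the upper-$\mathbf{a}$ constraint. For the subset on which also $\sigma^i-\mu^i\geqslant 0$ is imposed, take any nonnegative $\varphi\in\mathrm C_0(\mathrm X)$; then $\langle\varphi,\sigma^i\rangle\geqslant\langle\varphi,\mu_s^i\rangle$ for every $s$, and passage to the vague limit yields $\langle\varphi,\sigma^i-\mu_0^i\rangle\geqslant 0$. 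Letting $\varphi$ range over all such test functions gives $\sigma^i-\mu_0^i\geqslant 0$. Thus both classes are vaguely closed in $\mathfrak M^+(\mathbf{A})$, and vague compactness follows.

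The only (mild) obstacle is propagating the constraint $\langle g_i,\mu^i\rangle\leqslant a_i$ through vague limits even though $g_i$ need not be bounded or of compact support; this is dispatched by invoking the vague lower semicontinuity of $\nu\mapsto\langle g_i,\nu\rangle$ supplied by Lemma~\ref{lemma:lower}, which relies only on $g_i\in\mathrm\Phi(\mathrm X)$.
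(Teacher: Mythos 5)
Your proof is correct and follows essentially the same route as the paper: vague boundedness is deduced from the constraint $\langle g_i,\mu^i\rangle\leqslant a_i$ together with the positivity and continuity of $g_i$ on compact sets, relative compactness then comes from Lemma~\ref{lem:vaguecomp}, and closedness (hence genuine compactness) is obtained from the vague lower semicontinuity of $\nu\mapsto\langle g_i,\nu\rangle$ supplied by Lemma~\ref{lemma:lower}, plus the elementary observation that $\mathfrak M^+_{\boldsymbol{\sigma}}(\mathbf{A})$ is vaguely closed. No gaps.
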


\begin{proof} Fix $i\in I$, and let a compact set $K_i\subset A_i$ be given. Since
$g_i$ is positive and continuous, the relation
\[
a_i\geqslant\langle g_i,\mu^i\rangle\geqslant\mu^i(K_i)\,\min_{x\in
K_i}\,g_i(x),\quad\text{where \ }\mu\in\mathfrak
M^+(\mathbf{A},\leqslant\!\mathbf{a},\mathbf{g}),\] yields
\[\sup_{\boldsymbol{\mu}\in\mathfrak M^+(\mathbf{A},\leqslant\mathbf{a},\mathbf{g})}\,\mu^i(K_i)<\infty.\]
This implies that $\mathfrak
M^+(\mathbf{A},\leqslant\!\mathbf{a},\mathbf{g})$ is vaguely
bounded; hence, by Lemma~\ref{lem:vaguecomp}, it is vaguely
relatively compact. In fact, it is vaguely compact, since it is
vaguely closed in consequence of Lemma~\ref{lemma:lower} with
$\mathrm Y=A_i$ and $\psi=g_i$. Having observed that also $\mathfrak
M^+_{\boldsymbol{\sigma}}(\mathbf{A})$ is vaguely closed, we then
conclude that $\mathfrak
M^+_{\boldsymbol{\sigma}}(\mathbf{A},\leqslant\!\mathbf{a},\mathbf{g})$
is vaguely compact as well, which completes the proof.\end{proof}

\begin{lemma}\label{lem:aux2} If a net $(\boldsymbol{\mu}_s)_{s\in S}\subset\mathcal
E^+(\mathbf{A},\leqslant\!\mathbf{a},\mathbf{g})$ is strongly
bounded, then every its vague cluster point~$\boldsymbol{\mu}$ has
finite energy.
\end{lemma}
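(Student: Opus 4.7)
The strategy is to transfer the problem from the vector-measure semimetric space $\mathcal{E}^+(\mathbf A)$ to the scalar pre-Hilbert space $\mathcal E$ via the Jordan decomposition of $R$-images, and then invoke the consistency property~(C$_2$) on the positive and negative parts separately. First I pass to a subnet and assume $\boldsymbol{\mu}_s \to \boldsymbol{\mu}$ vaguely. By Lemma~\ref{lem:vague'} and componentwise convergence, the partial sums $\mu_s^\pm := \sum_{i \in I^\pm} \mu_s^i$ converge vaguely to $\mu^\pm := \sum_{i \in I^\pm} \mu^i$; these are precisely the Jordan parts of $R\boldsymbol{\mu}_s$ and $R\boldsymbol{\mu}$.

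The heart of the argument is to exhibit $\mu_s^\pm$ as elements of $\mathcal{E}^+$ that are \emph{strongly bounded uniformly} in~$s$. For membership, I view the one-signed restrictions $\boldsymbol{\mu}_s^\pm := (\mu_s^i)_{i \in I^\pm}$ as vector measures on the single-sign condensers $\mathbf{A}^\pm := (A_i)_{i \in I^\pm}$; Lemma~\ref{enfinite} combined with $\boldsymbol{\mu}_s \in \mathcal{E}^+(\mathbf{A})$ gives $\boldsymbol{\mu}_s^\pm \in \mathcal{E}^+(\mathbf{A}^\pm)$, whence $\mu_s^\pm = R\boldsymbol{\mu}_s^\pm \in \mathcal{E}$ by Corollary~\ref{unknown}. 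For the uniform bound, the identity
\[
\|\mu_s^+\|^2 + \|\mu_s^-\|^2 = \|R\boldsymbol{\mu}_s\|^2 + 2\kappa(\mu_s^+,\mu_s^-)
\]
reduces everything to controlling the cross term: if $I^- = \varnothing$ it is absent, and otherwise the standing assumption~(\ref{bou}) together with the constraint bound $\mu_s^i(\mathrm{X}) \leqslant a_i g_{i,\inf}^{-1}$ (which holds since $\boldsymbol{\mu}_s \in \mathfrak{M}^+(\mathbf{A},\leqslant\!\mathbf{a},\mathbf{g})$) and the summability~(\ref{s}) yield $|\kappa(\mu_s^+,\mu_s^-)| \leqslant C\mu_s^+(\mathrm{X})\mu_s^-(\mathrm{X})$, bounded uniformly in~$s$; the first summand on the right is bounded by the hypothesis of the lemma.

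Once this is in place, I apply property~(C$_2$) separately to the strongly bounded, vaguely convergent nets $\mu_s^\pm \to \mu^\pm$ in $\mathcal{E}^+$, obtaining $\mu^\pm \in \mathcal{E}^+$. Then $\kappa(\mu^+,\mu^-)$ is finite (vacuously, or via the Cauchy--Schwarz inequality in~$\mathcal{E}$), so $R\boldsymbol{\mu} = \mu^+ - \mu^- \in \mathcal{E}$, and Corollary~\ref{unknown} delivers $\boldsymbol{\mu} \in \mathcal{E}^+(\mathbf{A})$ as desired.

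The main obstacle is precisely the strong-boundedness step: from the hypothesis one only controls the signed-sum semi-norm $\|R\boldsymbol{\mu}_s\| = \|\mu_s^+ - \mu_s^-\|$, and decoupling this into individual control of $\|\mu_s^+\|$ and $\|\mu_s^-\|$ is exactly what the standing assumptions~(\ref{s}) and~(\ref{bou}) are tailored to accomplish. Without such a decoupling, cancellations in the signed measure $R\boldsymbol{\mu}_s$ could mask unbounded growth of the Jordan parts, and (C$_2$) would be inapplicable.
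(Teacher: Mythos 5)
Your argument is correct and follows essentially the same route as the paper: the decisive step---decoupling $\|R\boldsymbol{\mu}_s^{+}\|$ and $\|R\boldsymbol{\mu}_s^{-}\|$ from $\|R\boldsymbol{\mu}_s\|$ via the mass bounds $\mu_s^i(\mathrm X)\leqslant a_i g_{i,\inf}^{-1}$, the summability~(\ref{s}) and the bound~(\ref{bou}) on the cross energy---is exactly the paper's. The only (harmless) difference is the final step: the paper deduces $R\boldsymbol{\mu}^{\pm}\in\mathcal E$ from the vague lower semicontinuity of the energy (Lemma~\ref{lemma:lower} with $\psi=\kappa$ on $\mathrm X\times\mathrm X$ plus the vague continuity of $(\nu_1,\nu_2)\mapsto\nu_1\otimes\nu_2$), which holds for any kernel, whereas you invoke the consistency property~(C$_2$); this is legitimate under the standing assumptions but uses more than is needed.
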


\begin{proof}
Note that, by~(\ref{Ren}), the net of scalar measures
$(R\boldsymbol{\mu}_s)_{s\in S}\subset\mathcal E$ is strongly
bounded as well. We proceed by showing that so are
$(R\boldsymbol{\mu}_s^+)_{s\in S}$ and
$(R\boldsymbol{\mu}_s^-)_{s\in S}$, i.e.,
\begin{equation}
\sup_{s\in S}\,\|R\boldsymbol{\mu}_s^\pm\|^2<\infty.  \label{7.1}
\end{equation}
Of course, this needs to be verified only when $I^-\ne\varnothing$;
then, according to the standing assumptions, (\ref{s})
and~(\ref{bou}) hold. Since $\langle g_i,\mu_s^i\rangle\leqslant
a_i$, we conclude that $\mu_s^i(\mathrm X)\leqslant
a_ig_{i,\inf}^{-1}$ for all $i\in I$ and $s\in S$. Therefore, by
(\ref{s}),
\[\sup_{s\in S}\,R\boldsymbol{\mu}_s^\pm(\mathrm X)\leqslant
\sum_{i\in I}\,a_ig_{i,\inf}^{-1}<\infty.\] Because of~(\ref{bou}),
this implies that
$\kappa(R\boldsymbol{\mu}^+_s,R\boldsymbol{\mu}^-_s)$ remains
bounded from above on~$S$; hence, so do
$\|R\boldsymbol{\mu}^+_s\|^2$ and $\|R\boldsymbol{\mu}^-_s\|^2$.

If $(\boldsymbol{\mu}_d)_{d\in D}$ is a subnet of
$(\boldsymbol{\mu}_s)_{s\in S}$ that converges vaguely
to~$\boldsymbol{\mu}$, then $(R\boldsymbol{\mu}^+_d)_{d\in D}$ and
$(R\boldsymbol{\mu}^-_d)_{d\in D}$ converge vaguely
to~$R\boldsymbol{\mu}^+$ and~$R\boldsymbol{\mu}^-$, respectively.
Using the fact that the map $(\nu_1,\nu_2)\mapsto\nu_1\otimes\nu_2$
from $\mathfrak M^+(\mathrm X)\times\mathfrak M^+(\mathrm X)$ into
$\mathfrak M^+(\mathrm X\times\mathrm X)$ is vaguely continuous
(see~\cite[Chap.~3, \S~5, exerc.~5]{B2}) and applying
Lemma~\ref{lemma:lower} to $\mathrm Y=\mathrm X\times\mathrm X$ and
$\psi=\kappa$, we conclude from~(\ref{7.1}) that
$R\boldsymbol{\mu}^+$ and $R\boldsymbol{\mu}^-$ are both of finite
energy. By Corollary~\ref{unknown}, this means
$\boldsymbol{\mu}\in\mathcal E^+(\mathbf A)$, as was to be
proved.\end{proof}

\begin{corollary}\label{cor:aux1} If a net $(\boldsymbol{\mu}_s)_{s\in
S}\subset\mathcal E^+(\mathbf{A},\leqslant\!\mathbf{a},\mathbf{g})$
is strongly bounded, then
\begin{equation} \sup_{s\in S}\,\|\mu_s^i\|^2<\infty,\quad i\in I.\label{7.1i}
\end{equation}
\end{corollary}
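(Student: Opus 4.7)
My plan is to reduce Corollary~\ref{cor:aux1} to the bound (\ref{7.1}) already extracted in the proof of Lemma~\ref{lem:aux2}, via the pre-Hilbert identity in~$\mathcal E$. Fix $i\in I$ and let $\alpha(i)=+$ if $i\in I^+$, $\alpha(i)=-$ if $i\in I^-$, so that $\mu_s^i$ is a summand of the nonnegative scalar measure $R\boldsymbol{\mu}_s^{\alpha(i)}$. Writing
\[R\boldsymbol{\mu}_s^{\alpha(i)}=\mu_s^i+\rho_s^i,\qquad \rho_s^i:=\sum_{j\in I^{\alpha(i)},\,j\ne i}\mu_s^j\geqslant 0,\]
and noting that $\mu_s^i,\rho_s^i\in\mathcal E$ (so that $R\boldsymbol{\mu}_s^{\alpha(i)}\in\mathcal E$ as well, by Lemma~\ref{enfinite} applied to the sub-vector-measure indexed by~$I^{\alpha(i)}$), the expansion of the square norm in~$\mathcal E$ together with positive definiteness ($\|\rho_s^i\|^2\geqslant 0$) gives
\[\|\mu_s^i\|^2=\|R\boldsymbol{\mu}_s^{\alpha(i)}\|^2-2\kappa(\mu_s^i,\rho_s^i)-\|\rho_s^i\|^2\leqslant\|R\boldsymbol{\mu}_s^{\alpha(i)}\|^2-2\kappa(\mu_s^i,\rho_s^i).\]
Since by~(\ref{7.1}) the first term is uniformly bounded in~$s$, the remaining task is to bound $-\kappa(\mu_s^i,\rho_s^i)$ uniformly from above.

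I would then split into two cases according to whether $\mathrm X$ is compact. In the noncompact case, the very definition of a kernel forces $\kappa\geqslant 0$ on $\mathrm X\times\mathrm X$; as $\mu_s^i$ and $\rho_s^i$ are nonnegative, this yields $\kappa(\mu_s^i,\rho_s^i)\geqslant 0$, and the desired bound follows at once with $\|\mu_s^i\|^2\leqslant\|R\boldsymbol{\mu}_s^{\alpha(i)}\|^2$. In the compact case, local finiteness of~$\mathbf A$ forces $I$ to be finite; continuity of~$g_i$ on the compact set~$A_i$ ensures $g_{i,\inf}>0$, so the constraint $\langle g_i,\mu_s^i\rangle\leqslant a_i$ supplies a uniform total-mass bound $\mu_s^i(\mathrm X)\leqslant a_i/g_{i,\inf}=:C_i<\infty$; and since $\kappa$ is lower semicontinuous on the compact square $\mathrm X\times\mathrm X$, it is bounded below by some constant~$-M$, whence
\[-\kappa(\mu_s^i,\rho_s^i)\leqslant M\,C_i\sum_{j\in I^{\alpha(i)},\,j\ne i}C_j<\infty\]
uniformly in~$s$, as required.

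The main obstacle is precisely the compact-$\mathrm X$ case: one cannot there invoke $\kappa\geqslant 0$ and must instead combine the mass bound issuing from the $\mathbf g$-constraint with the lower bound on~$\kappa$ supplied by lower semicontinuity on a compact square. In the noncompact situation---which covers essentially all applications of interest---the argument is a one-line consequence of~(\ref{7.1}) and the nonnegativity of~$\kappa$.
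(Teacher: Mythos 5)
Your argument is correct and is essentially the paper's own proof, merely unpacked: the paper expands $\|R\boldsymbol{\mu}_s^{\pm}\|^2$ via~(\ref{vecen}) into diagonal plus off-diagonal terms and bounds the off-diagonal sum below uniformly by Lemma~\ref{casei} (whose proof is precisely your compact/noncompact dichotomy: $\kappa\geqslant0$ when $\mathrm X\times\mathrm X$ is noncompact, versus finiteness of~$I$, the mass bound $\mu_s^i(\mathrm X)\leqslant a_ig_{i,\inf}^{-1}$, and lower boundedness of the lower semicontinuous~$\kappa$ on a compact square otherwise), then invokes~(\ref{7.1}). Your per-index decomposition $R\boldsymbol{\mu}_s^{\alpha(i)}=\mu_s^i+\rho_s^i$ with $\|\rho_s^i\|^2\geqslant0$ discarded is just a slightly coarser bookkeeping of the same estimate, so there is nothing further to add.
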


\begin{proof} As an application of Lemma~\ref{casei}, we obtain
\begin{equation*}\inf_{s\in S}\,\sum_{i\ne
j,\ i,j\in
I^\pm}\,\langle\kappa,\mu_s^i\otimes\mu_s^j\rangle>-\infty.\label{bdbl}\end{equation*}
When combined with (\ref{vecen}) and~(\ref{7.1}), this yields the
corollary.\end{proof}

\subsection{Strong completeness of
$\mathcal E^+(\mathbf{A},\leqslant\!\mathbf{a},\mathbf{g})$ and
$\mathcal
E^+_{\boldsymbol{\sigma}}(\mathbf{A},\leqslant\!\mathbf{a},\mathbf{g})$}\label{sec:strong}

\begin{theorem}\label{th:strong} The following assertions
hold:\smallskip
\begin{itemize}
\item[\rm(i)] The
semimetric space $\mathcal
E^+(\mathbf{A},\leqslant\!\mathbf{a},\mathbf{g})$ is complete. In
more detail, if $(\boldsymbol{\mu}_s)_{s\in S}$ is a strong Cauchy
net in $\mathcal E^+(\mathbf{A},\leqslant\!\mathbf{a},\mathbf{g})$
and $\boldsymbol{\mu}$ is one of its vague cluster points\/
{\rm(}such a~$\boldsymbol{\mu}$ exists\/{\rm)}, then
$\boldsymbol{\mu}\in\mathcal
E^+(\mathbf{A},\leqslant\!\mathbf{a},\mathbf{g})$ and
$\boldsymbol{\mu}_s\to\boldsymbol{\mu}$ strongly, i.e.
\begin{equation}
\lim_{s\in S}\,\|\boldsymbol{\mu}_s-\boldsymbol{\mu}\|_{\mathcal
E^+(\mathbf A)}=0.\label{str}
\end{equation}
\item[\rm(ii)] If the kernel $\kappa$ is strictly positive
definite while all $A_i$, $i\in I$, are mutually disjoint, then the
strong topology on $\mathcal
E^+(\mathbf{A},\leqslant\!\mathbf{a},\mathbf{g})$ is finer than the
vague one.  In more detail, if $(\boldsymbol{\mu}_s)_{s\in
S}\subset\mathcal E^+(\mathbf{A},\leqslant\!\mathbf{a},\mathbf{g})$
converges strongly to $\boldsymbol{\mu}_0\in\mathcal
E^+(\mathbf{A})$, then actually $\boldsymbol{\mu}_0\in\mathcal
E^+(\mathbf{A},\leqslant\!\mathbf{a},\mathbf{g})$ and
$\boldsymbol{\mu}_s\to\boldsymbol{\mu}_0$ vaguely.\smallskip
\item[\rm(iii)] Both the assertions\/ {\rm(i)} and\/ {\rm(ii)} remain valid if\/ $\mathcal
E^+(\mathbf{A},\leqslant\!\mathbf{a},\mathbf{g})$ is  replaced
everywhere by $\mathcal
E^+_{\boldsymbol{\sigma}}(\mathbf{A},\leqslant\!\mathbf{a},\mathbf{g})$.\end{itemize}\end{theorem}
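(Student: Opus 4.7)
For (i), I start by producing a vague cluster point of the strong Cauchy net $(\boldsymbol{\mu}_s)_{s\in S}\subset\mathcal E^+(\mathbf A,\leqslant\!\mathbf a,\mathbf g)$: Lemma~\ref{Mbounded} gives vague compactness of $\mathfrak M^+(\mathbf A,\leqslant\!\mathbf a,\mathbf g)$, so a cluster point $\boldsymbol{\mu}$ exists there, and I pass to a subnet converging vaguely to $\boldsymbol{\mu}$. Since strong Cauchy nets are strongly bounded, Lemma~\ref{lem:aux2} will place $\boldsymbol{\mu}$ in $\mathcal E^+(\mathbf A)$, hence in $\mathcal E^+(\mathbf A,\leqslant\!\mathbf a,\mathbf g)$. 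What remains is to upgrade vague to strong convergence, and this is the step I expect to be the main obstacle, because Fuglede's property~(C$_1$) applies only to nonnegative measures in $\mathcal E^+$, whereas $(R\boldsymbol{\mu}_s)$ is in general signed.

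My route around this obstacle is to use property~(C$_2$) instead. Lemma~\ref{lem:vague'}, applied separately to the index sets $I^+$ and $I^-$, shows that $R\boldsymbol{\mu}_s^\pm\to R\boldsymbol{\mu}^\pm$ vaguely, and the proof of Lemma~\ref{lem:aux2} (estimate~(\ref{7.1})) tells me these nets are strongly bounded in $\mathcal E^+$; (C$_2$) then delivers weak convergence $R\boldsymbol{\mu}_s^\pm\to R\boldsymbol{\mu}^\pm$ in $\mathcal E$, and by linearity $R\boldsymbol{\mu}_s\to R\boldsymbol{\mu}$ weakly. To combine this with the strong Cauchy property I invoke the elementary pre-Hilbert fact that $\|R\boldsymbol{\mu}_s\|$, being Cauchy in $\mathbb R$, tends to some $c\geqslant\|R\boldsymbol{\mu}\|$ (the latter by weak lower semicontinuity of the norm); then, fixing $\varepsilon>0$ and $s_0$ with $\|R\boldsymbol{\mu}_s-R\boldsymbol{\mu}_t\|^2<\varepsilon$ for $s,t\geqslant s_0$, expanding the square and letting $t\to\infty$ yields $\|R\boldsymbol{\mu}_s\|^2-2\kappa(R\boldsymbol{\mu}_s,R\boldsymbol{\mu})+c^2\leqslant\varepsilon$, which rearranges to $\|R\boldsymbol{\mu}_s-R\boldsymbol{\mu}\|^2\leqslant\varepsilon+\|R\boldsymbol{\mu}\|^2-c^2\leqslant\varepsilon$. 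By the isometry~(\ref{seminorm}) this gives strong convergence of the subnet, and the Cauchy property transfers the conclusion to the entire net, establishing~(\ref{str}).

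For (ii), strong convergence $\boldsymbol{\mu}_s\to\boldsymbol{\mu}_0$ makes the net Cauchy, so by (i) there is a vague cluster point $\tilde{\boldsymbol{\mu}}\in\mathcal E^+(\mathbf A,\leqslant\!\mathbf a,\mathbf g)$ to which the net converges strongly. Uniqueness of strong limits gives $\|\boldsymbol{\mu}_0-\tilde{\boldsymbol{\mu}}\|=0$, and under strict positive definiteness combined with disjointness of the $A_i$ the discussion at the end of Section~\ref{sec:semimetric} forces $\boldsymbol{\mu}_0=\tilde{\boldsymbol{\mu}}$; in particular $\boldsymbol{\mu}_0\in\mathcal E^+(\mathbf A,\leqslant\!\mathbf a,\mathbf g)$. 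The same reasoning applied to any vaguely convergent subnet shows every vague cluster point of $(\boldsymbol{\mu}_s)$ equals $\boldsymbol{\mu}_0$, and vague compactness (Lemma~\ref{Mbounded}) then forces vague convergence of the entire net to $\boldsymbol{\mu}_0$.

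For (iii), I only need to observe that $\mathfrak M^+_{\boldsymbol{\sigma}}(\mathbf A)$ is vaguely closed, since each of the conditions $\sigma^i-\mu^i\geqslant 0$ defines a vaguely closed subset of $\mathfrak M^+(A_i)$; consequently the vague cluster point produced in (i) automatically lies in $\mathfrak M^+_{\boldsymbol{\sigma}}(\mathbf A)$, and both (i) and (ii) go through verbatim with $\mathcal E^+_{\boldsymbol{\sigma}}(\mathbf A,\leqslant\!\mathbf a,\mathbf g)$ in place of $\mathcal E^+(\mathbf A,\leqslant\!\mathbf a,\mathbf g)$.
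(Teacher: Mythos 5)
Your proposal is correct and follows essentially the same route as the paper's proof: vague compactness (Lemma~\ref{Mbounded}) plus Lemma~\ref{lem:aux2} for the cluster point, the decomposition into $R\boldsymbol{\mu}_s^{\pm}$ with the bound~(\ref{7.1}) and property~(C$_2$) to get weak convergence $R\boldsymbol{\mu}_s\to R\boldsymbol{\mu}$, then a pre-Hilbert computation combining weak convergence with the Cauchy property, and the same Hausdorff/unique-cluster-point argument for (ii) and vague closedness of $\mathfrak M^+_{\boldsymbol{\sigma}}(\mathbf A)$ for (iii). The only cosmetic difference is the last step of (i), where the paper uses $\|R\boldsymbol{\mu}_s-R\boldsymbol{\mu}\|^2=\lim_{\ell}\kappa(R\boldsymbol{\mu}_s-R\boldsymbol{\mu},R\boldsymbol{\mu}_s-R\boldsymbol{\mu}_\ell)$ followed by Cauchy--Schwarz instead of your square expansion with weak lower semicontinuity of the norm; the two computations are interchangeable.
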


\begin{proof}To verify (i), fix a strong Cauchy net  $(\boldsymbol{\mu}_s)_{s\in S}\subset\mathcal
E^+(\mathbf{A},\leqslant\!\mathbf{a},\mathbf{g})$. Since such a net
converges strongly to any of its strong cluster points,
$(\boldsymbol{\mu}_s)_{s\in S}$ can be assumed to be strongly
bounded. Then, by Lemmas~\ref{Mbounded} and~\ref{lem:aux2}, a vague
cluster point~$\boldsymbol{\mu}$ of~$(\boldsymbol{\mu}_s)_{s\in S}$
exists and, moreover,
\begin{equation}
\boldsymbol{\mu}\in\mathcal
E^+(\mathbf{A},\leqslant\!\mathbf{a},\mathbf{g}). \label{leqslant1}
\end{equation}
We next proceed by proving (\ref{str}). Of course, there is no loss
of generality in assuming $(\boldsymbol{\mu}_s)_{s\in S}$ to
converge vaguely to~$\boldsymbol{\mu}$. Then, by
Lemma~\ref{lem:vague'}, $(R\boldsymbol{\mu}^+_s)_{s\in S}$ and
$(R\boldsymbol{\mu}^-_s)_{s\in S}$ converge vaguely
to~$R\boldsymbol{\mu}^+$ and~$R\boldsymbol{\mu}^-$, respectively.
Since, by~(\ref{7.1}), these nets are strongly bounded in~$\mathcal
E^+$, the property~(C$_2$) (see~Sec.~\ref{sec:2}) shows that they
approach~$R\boldsymbol{\mu}^+$ and~$R\boldsymbol{\mu}^-$,
respectively, in the weak topology as well, and so
$R\boldsymbol{\mu}_s\to R\boldsymbol{\mu}$ weakly. This gives,
by~(\ref{seminorm}),
\[
\|\boldsymbol{\mu}_s-\boldsymbol{\mu}\|^2=\|R\boldsymbol{\mu}_s-R\boldsymbol{\mu}\|^2=\lim_{\ell\in
S}\,\kappa(R\boldsymbol{\mu}_s-R\boldsymbol{\mu},R\boldsymbol{\mu}_s-R\boldsymbol{\mu}_\ell)
\]
and hence, by the Cauchy--Schwarz inequality,
\[
\|\boldsymbol{\mu}_s-\boldsymbol{\mu}\|^2\leqslant
\|\boldsymbol{\mu}_s-\boldsymbol{\mu}\|\,\liminf_{\ell\in
S}\,\|\boldsymbol{\mu}_s-\boldsymbol{\mu}_\ell\|,
\] which proves
(\ref{str}) as required, because
$\|\boldsymbol{\mu}_s-\boldsymbol{\mu}_\ell\|$ becomes arbitrarily
small when $s,\,\ell\in S$ are sufficiently large. The proof of (i)
is complete.

To establish (ii), suppose now that the kernel $\kappa$ is strictly
positive definite, while all $A_i$, $i\in I$, are mutually disjoint,
and let the net $(\boldsymbol{\mu}_s)_{s\in S}$ converge strongly to
some $\boldsymbol{\mu}_0\in\mathcal E^+(\mathbf{A})$. Given a vague
limit point~$\boldsymbol{\mu}$ of~$(\boldsymbol{\mu}_s)_{s\in S}$,
we conclude from~(\ref{str}) that
$\|\boldsymbol{\mu}_0-\boldsymbol{\mu}\|=0$, hence
$R\boldsymbol{\mu}_0=R\boldsymbol{\mu}$ since $\kappa$ is strictly
positive definite, and finally $\boldsymbol{\mu}_0=\boldsymbol{\mu}$
because $A_i$, $i\in I$, are mutually disjoint. In view
of~(\ref{leqslant1}), this means that $\boldsymbol{\mu}_0\in\mathcal
E^+(\mathbf{A},\leqslant\!\mathbf{a},\mathbf{g})$, which is a part
of the desired conclusion. Moreover, $\boldsymbol{\mu}_0$ has thus
been shown to be identical to any vague cluster point
of~$(\boldsymbol{\mu}_s)_{s\in S}$. Since the vague topology is
Hausdorff, this implies that $\boldsymbol{\mu}_0$ is actually the
vague limit of~$(\boldsymbol{\mu}_s)_{s\in S}$ (cf.~\cite[Chap.~I,
\S~9, n$^\circ$\,1, cor.]{B1}), as claimed.

Finally, we observe that all the arguments applied above remain
valid if $\mathcal E^+(\mathbf{A},\leqslant\!\mathbf{a},\mathbf{g})$
is replaced everywhere by $\mathcal
E^+_{\boldsymbol{\sigma}}(\mathbf{A},\leqslant\!\mathbf{a},\mathbf{g})$.
This yields~(iii).\end{proof}

\begin{remark} Since
the semimetric spaces $\mathcal
E^+(\mathbf{A},\leqslant\!\mathbf{a},\mathbf{g})$ and $\mathcal
E^+_{\boldsymbol{\sigma}}(\mathbf{A},\leqslant\!\mathbf{a},\mathbf{g})$
are isometric to their $R$-images, Theorem~\ref{th:strong} has thus
singled out  {\em strongly complete\/} topological subspaces of the
pre-Hilbert space~$\mathcal E$, whose elements are {\em signed\/}
measures. This is of a independent interest because, according to a
well-known counterexample by H.~Cartan~\cite{Car}, the whole
space~$\mathcal E$ is strongly incomplete even for the Newtonian
kernel $|x-y|^{2-n}$ in~$\mathbb R^n$, $n\geqslant3$.\end{remark}

\section{Extremal measures in the constrained energy problem}\label{sec:extremal}
To apply Theorem~\ref{th:strong} to the constrained energy problem,
we next proceed by introducing the concept of extremal measure
defined as a strong and, simultaneously, a vague limit point of a
minimizing net. See below for strict definitions and related
auxiliary results.

\subsection{Extremal measures: existence, uniqueness, and vague compactness}

\begin{definition}We call a net $(\boldsymbol{\mu}_s)_{s\in S}$ {\em minimizing\/} if
$(\boldsymbol{\mu}_s)_{s\in S}\subset\mathcal
E^+_{\boldsymbol{\sigma},\mathbf{f}}(\mathbf{A},\mathbf{a},\mathbf{g})$
and
\begin{equation}\lim_{s\in S}\,G_{\mathbf{f}}(\boldsymbol{\mu}_s)=
G^{\boldsymbol{\sigma}}_{\mathbf{f}}(\mathbf{A},\mathbf{a},\mathbf{g}).
\label{min}\end{equation}\end{definition}

Let $\mathbb
M^{\boldsymbol{\sigma}}_{\mathbf{f}}(\mathbf{A},\mathbf{a},\mathbf{g})$
consist of all minimizing nets, and let $\mathcal
M^{\boldsymbol{\sigma}}_{\mathbf{f}}(\mathbf{A},\mathbf{a},\mathbf{g})$
be the union of the vague cluster sets of
$(\boldsymbol{\mu}_s)_{s\in S}$, where $(\boldsymbol{\mu}_s)_{s\in
S}$ ranges over $\mathbb
M^{\boldsymbol{\sigma}}_{\mathbf{f}}(\mathbf{A},\mathbf{a},\mathbf{g})$.

\begin{definition}\label{def:extr} We call $\boldsymbol{\gamma}\in\mathcal E^+(\mathbf{A})$ {\em extremal\/}
if there exists $(\boldsymbol{\mu}_s)_{s\in S}\in\mathbb
M^{\boldsymbol{\sigma}}_{\mathbf{f}}(\mathbf{A},\mathbf{a},\mathbf{g})$
that converges to~$\boldsymbol{\gamma}$ both strongly and vaguely;
such a net $(\boldsymbol{\mu}_s)_{s\in S}$ is said to {\it
generate\/}~$\boldsymbol{\gamma}$. The class of all extremal
measures will be denoted by $\mathfrak
E^{\boldsymbol{\sigma}}_{\mathbf{f}}(\mathbf{A},\mathbf{a},\mathbf{g})$.
\end{definition}

\begin{lemma}\label{lemma:WM} The following assertions hold
true:\smallskip
\begin{itemize}
\item[\rm(i)] From every minimizing net one can select a subnet generating an extremal measure;
hence, $\mathfrak
E^{\boldsymbol{\sigma}}_{\mathbf{f}}(\mathbf{A},\mathbf{a},\mathbf{g})$
is nonempty. Furthermore,
\begin{equation}\label{ME}\mathfrak E^{\boldsymbol{\sigma}}_{\mathbf{f}}(\mathbf{A},\mathbf{a},\mathbf{g})
\subset\mathcal
E^+_{\boldsymbol{\sigma}}(\mathbf{A},\leqslant\!\mathbf{a},\mathbf{g})\end{equation}
and
\begin{equation}\mathfrak E^{\boldsymbol{\sigma}}_{\mathbf{f}}(\mathbf{A},\mathbf{a},\mathbf{g})=
\mathcal
M^{\boldsymbol{\sigma}}_{\mathbf{f}}(\mathbf{A},\mathbf{a},\mathbf{g}).
\label{WM}\end{equation}
\item[\rm(ii)] Every minimizing
net converges strongly to every extremal measure; hence, $\mathfrak
E^{\boldsymbol{\sigma}}_{\mathbf{f}}(\mathbf{A},\mathbf{a},\mathbf{g})$
is contained in an equivalence class in~$\mathcal
E^+(\mathbf{A})$.\smallskip
\item[\rm(iii)] The class $\mathfrak E^{\boldsymbol{\sigma}}_{\mathbf{f}}(\mathbf{A},\mathbf{a},\mathbf{g})$ is vaguely compact.
\end{itemize}
\end{lemma}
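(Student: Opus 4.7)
The plan is to extract everything from two ingredients: the parallelogram identity in the pre-Hilbert space $\mathcal E$ (as used in the proof of Lemma~\ref{lemma:unique}) together with the strong completeness of $\mathcal E^+_{\boldsymbol\sigma}(\mathbf A,\leqslant\!\mathbf a,\mathbf g)$ from Theorem~\ref{th:strong}(iii). The first step I would carry out is a \emph{bi-Cauchy} estimate: for any two minimizing nets $(\boldsymbol\mu_s)_{s\in S}$ and $(\boldsymbol\nu_t)_{t\in T}$, convexity of $\mathcal E^+_{\boldsymbol\sigma,\mathbf f}(\mathbf A,\mathbf a,\mathbf g)$ gives $G_{\mathbf f}((\boldsymbol\mu_s+\boldsymbol\nu_t)/2)\geqslant G^{\boldsymbol\sigma}_{\mathbf f}(\mathbf A,\mathbf a,\mathbf g)$, and the parallelogram identity applied to $R\boldsymbol\mu_s$ and $R\boldsymbol\nu_t$ exactly as in Lemma~\ref{lemma:unique} yields
\[
\|\boldsymbol\mu_s-\boldsymbol\nu_t\|_{\mathcal E^+(\mathbf A)}^2\leqslant 2G_{\mathbf f}(\boldsymbol\mu_s)+2G_{\mathbf f}(\boldsymbol\nu_t)-4G^{\boldsymbol\sigma}_{\mathbf f}(\mathbf A,\mathbf a,\mathbf g),
\]
whose right-hand side vanishes in the limit by~(\ref{min}). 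In particular, every minimizing net is strong Cauchy, and specializing $(\boldsymbol\nu_t)=(\boldsymbol\mu_s)$ it is strongly bounded. Furthermore, vague lower semicontinuity of $\nu\mapsto\langle g_i,\nu\rangle$ (Lemma~\ref{lemma:lower}) together with vague closedness of $\mathfrak M^+_{\boldsymbol\sigma}(\mathbf A)$ forces every vague cluster point of a minimizing net into $\mathfrak M^+_{\boldsymbol\sigma}(\mathbf A,\leqslant\!\mathbf a,\mathbf g)$.

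For~(i), any minimizing net sits inside the vaguely compact set $\mathfrak M^+_{\boldsymbol\sigma}(\mathbf A,\leqslant\!\mathbf a,\mathbf g)$ (Lemma~\ref{Mbounded}). Passing to a subnet with vague limit $\boldsymbol\gamma$, Theorem~\ref{th:strong}(iii) places $\boldsymbol\gamma$ in $\mathcal E^+_{\boldsymbol\sigma}(\mathbf A,\leqslant\!\mathbf a,\mathbf g)$ and upgrades vague convergence of the subnet to strong convergence; the subnet thus generates $\boldsymbol\gamma$, yielding both nonemptiness of $\mathfrak E^{\boldsymbol\sigma}_{\mathbf f}(\mathbf A,\mathbf a,\mathbf g)$ and the inclusion~(\ref{ME}). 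For~(\ref{WM}), the containment $\mathfrak E^{\boldsymbol\sigma}_{\mathbf f}\subset\mathcal M^{\boldsymbol\sigma}_{\mathbf f}$ is built into Definition~\ref{def:extr}; conversely, if $\boldsymbol\gamma$ is a vague cluster point of a minimizing net, extracting a vaguely convergent (still minimizing, still strong Cauchy) subnet and invoking Theorem~\ref{th:strong}(iii) again gives strong convergence of that subnet to its vague limit $\boldsymbol\gamma$, so $\boldsymbol\gamma\in\mathfrak E^{\boldsymbol\sigma}_{\mathbf f}$.

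Assertion~(ii) then follows at once from the bi-Cauchy estimate and the triangle inequality: for extremal $\boldsymbol\gamma$ generated by $(\boldsymbol\nu_t)_{t\in T}$ and any minimizing $(\boldsymbol\mu_s)_{s\in S}$,
\[
\|\boldsymbol\mu_s-\boldsymbol\gamma\|_{\mathcal E^+(\mathbf A)}\leqslant\|\boldsymbol\mu_s-\boldsymbol\nu_t\|_{\mathcal E^+(\mathbf A)}+\|\boldsymbol\nu_t-\boldsymbol\gamma\|_{\mathcal E^+(\mathbf A)},
\]
with the first term small by Step~1 and the second small since $(\boldsymbol\nu_t)$ generates $\boldsymbol\gamma$. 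Applying this with two extremal measures $\boldsymbol\gamma_1,\boldsymbol\gamma_2$ and any common minimizing net forces $\|\boldsymbol\gamma_1-\boldsymbol\gamma_2\|_{\mathcal E^+(\mathbf A)}=0$, placing $\mathfrak E^{\boldsymbol\sigma}_{\mathbf f}$ in a single equivalence class.

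For~(iii), since $\mathfrak E^{\boldsymbol\sigma}_{\mathbf f}\subset\mathfrak M^+_{\boldsymbol\sigma}(\mathbf A,\leqslant\!\mathbf a,\mathbf g)$ is vaguely relatively compact, it suffices to prove vague closedness. Given $(\boldsymbol\gamma_\alpha)_{\alpha\in A}\subset\mathfrak E^{\boldsymbol\sigma}_{\mathbf f}$ converging vaguely to $\boldsymbol\gamma_0$, I would pick for each $\alpha$ a minimizing net $(\boldsymbol\mu^\alpha_s)_{s\in S_\alpha}$ generating $\boldsymbol\gamma_\alpha$ and then carry out a Moore--Smith diagonal construction indexed by triples $d=(\alpha,V,n)$ with $V$ a vague neighborhood of $\boldsymbol\gamma_0$ and $n\in\mathbb N$, ordered componentwise with reverse inclusion on~$V$; for each such $d$ choose some $\alpha'\geqslant\alpha$ with $\boldsymbol\gamma_{\alpha'}\in V$ (possible by vague convergence) and then $s\in S_{\alpha'}$ with $\boldsymbol\mu^{\alpha'}_s\in V$ and $G_{\mathbf f}(\boldsymbol\mu^{\alpha'}_s)\leqslant G^{\boldsymbol\sigma}_{\mathbf f}(\mathbf A,\mathbf a,\mathbf g)+1/n$ (possible since $(\boldsymbol\mu^{\alpha'}_s)_s$ converges vaguely to $\boldsymbol\gamma_{\alpha'}$ and is minimizing). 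The resulting diagonal net is minimizing and converges vaguely to $\boldsymbol\gamma_0$, whence $\boldsymbol\gamma_0\in\mathcal M^{\boldsymbol\sigma}_{\mathbf f}=\mathfrak E^{\boldsymbol\sigma}_{\mathbf f}$ by~(\ref{WM}). I expect this diagonal construction to be the main technical obstacle: one has to verify that $d\mapsto(\alpha'(d),s(d))$ defines a net on a genuinely directed set and that cofinality simultaneously produces vague convergence to $\boldsymbol\gamma_0$ and the minimizing property -- the rest of the argument is mechanical once the bi-Cauchy estimate and Theorem~\ref{th:strong}(iii) are in hand.
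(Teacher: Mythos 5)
Your proposal is correct and follows essentially the same route as the paper: the parallelogram-identity bi-Cauchy estimate (the paper's relation for $\|\boldsymbol{\mu}_s-\boldsymbol{\nu}_t\|^2$), Theorem~\ref{th:strong}(iii) to convert vague cluster points of strong Cauchy minimizing nets into strong limits lying in $\mathcal E^+_{\boldsymbol{\sigma}}(\mathbf{A},\leqslant\!\mathbf{a},\mathbf{g})$, and the triangle inequality for~(ii). The only cosmetic divergence is in~(iii), where you build an explicit diagonal net indexed by triples $(\alpha,V,n)$ (which works, provided the $V$ range over \emph{open} neighborhoods of $\boldsymbol{\gamma}_0$ so that they are also neighborhoods of the nearby $\boldsymbol{\gamma}_{\alpha'}$), whereas the paper packages the same Moore--Smith diagonal argument as an appeal to Kelley's theorem on iterated limits.
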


\begin{proof}Fix $(\boldsymbol{\mu}_s)_{s\in S}$ and $(\boldsymbol{\nu}_t)_{t\in T}$ in $\mathbb
M^{\boldsymbol{\sigma}}_{\mathbf{f}}(\mathbf{A},\mathbf{a},\mathbf{g})$.
Then
\begin{equation}
\lim_{(s,t)\in S\times
T}\,\|\boldsymbol{\mu}_s-\boldsymbol{\nu}_t\|^2=0, \label{fund}
\end{equation}
where $S\times T$ is the directed product (see, e.g.,
\cite[Chap.~2,~\S~3]{K}) of the directed sets~$S$ and~$T$. Indeed,
in the same manner as in the proof of~Lemma~\ref{lemma:unique} we
get
\[0\leqslant\|R\boldsymbol{\mu}_s-R\boldsymbol{\nu}_t\|^2\leqslant-
4G^{\boldsymbol{\sigma}}_{\mathbf{f}}(\mathbf{A},\mathbf{a},\mathbf{g})+
2G_{\mathbf{f}}(\boldsymbol{\mu}_s)+2G_{\mathbf{f}}(\boldsymbol{\nu}_t),\]
which yields (\ref{fund}) when combined with (\ref{min}).

Relation~(\ref{fund}) implies that $(\boldsymbol{\mu}_s)_{s\in S}$
is strongly fundamental. Therefore, by Theorem~\ref{th:strong},
(iii), for every vague cluster point~$\boldsymbol{\mu}$
of~$(\boldsymbol{\mu}_s)_{s\in S}$ (such a~$\boldsymbol{\mu}$
exists) we have $\boldsymbol{\mu}\in\mathcal
E^+_{\boldsymbol{\sigma}}(\mathbf{A},\leqslant\!\mathbf{a},\mathbf{g})$
and $\boldsymbol{\mu}_s\to\boldsymbol{\mu}$ strongly. Thus,
$\boldsymbol{\mu}$ is an extremal measure; actually,
\[\mathcal
M^{\boldsymbol{\sigma}}_{\mathbf{f}}(\mathbf{A},\mathbf{a},\mathbf{g})\subset\mathfrak
E^{\boldsymbol{\sigma}}_{\mathbf{f}}(\mathbf{A},\mathbf{a},\mathbf{g}).\]
Since the inverse inclusion is obvious, relations~(\ref{ME})
and~(\ref{WM}) follow.

Having thus proved (i), we proceed by verifying (ii). Fix arbitrary
$(\boldsymbol{\mu}_s)_{s\in S}\in\mathbb
M^{\boldsymbol{\sigma}}_{\mathbf{f}}(\mathbf{A},\mathbf{a},\mathbf{g})$
and $\boldsymbol{\gamma}\in\mathfrak
E^{\boldsymbol{\sigma}}_{\mathbf{f}}(\mathbf{A},\mathbf{a},\mathbf{g})$.
Then, according to Definition~\ref{def:extr}, there exists a net
in~$\mathbb
M^{\boldsymbol{\sigma}}_{\mathbf{f}}(\mathbf{A},\mathbf{a},\mathbf{g})$,
say $(\boldsymbol{\nu}_t)_{t\in T}$, that converges
to~$\boldsymbol{\gamma}$ strongly. Repeated application
of~(\ref{fund}) shows that also $(\boldsymbol{\mu}_s)_{s\in S}$
converges to~$\boldsymbol{\gamma}$ strongly, as claimed.

To establish (iii), it is enough to prove that $\mathcal
M^{\boldsymbol{\sigma}}_{\mathbf{f}}(\mathbf{A},\mathbf{a},\mathbf{g})$
is vaguely compact. Fix $(\boldsymbol{\gamma}_s)_{s\in
S}\subset\mathcal
M^{\boldsymbol{\sigma}}_{\mathbf{f}}(\mathbf{A},\mathbf{a},\mathbf{g})$.
It follows from~(\ref{ME}) and~Lemma~\ref{Mbounded} that there
exists a vague cluster point~$\boldsymbol{\gamma}_0$ of
$(\boldsymbol{\gamma}_s)_{s\in S}$; let
$(\boldsymbol{\gamma}_t)_{t\in T}$ be a subnet
of~$(\boldsymbol{\gamma}_s)_{s\in S}$ that converges vaguely
to~$\boldsymbol{\gamma}_0$. Then for every $t\in T$ one can choose
$(\boldsymbol{\mu}_{s_t})_{s_t\in S_t}\in\mathbb
M^{\boldsymbol{\sigma}}_{\mathbf{f}}(\mathbf{A},\mathbf{a},\mathbf{g})$
converging vaguely to~$\boldsymbol{\gamma}_t$. Consider the
Cartesian product $\prod\,\{S_t: t\in T\}$~--- that is, the
collection of all functions~$\beta$ on~$T$ with $\beta(t)\in S_t$,
and let~$D$ denote the directed product $T\times\prod\,\{S_t: t\in
T\}$. Given $(t,\beta)\in D$, write
$\boldsymbol{\mu}_{(t,\beta)}:=\boldsymbol{\mu}_{\beta(t)}$. Then
the theorem on iterated limits from \cite[Chap.~2, \S~4]{K} yields
that the net $(\boldsymbol{\mu}_{(t,\beta)})_{(t,\beta)\in D}$
belongs to $\mathbb
M^{\boldsymbol{\sigma}}_{\mathbf{f}}(\mathbf{A},\mathbf{a},\mathbf{g})$
and converges vaguely to~$\boldsymbol{\gamma}_0$. Thus,
$\boldsymbol{\gamma}_0\in\mathcal
M^{\boldsymbol{\sigma}}_{\mathbf{f}}(\mathbf{A},\mathbf{a},\mathbf{g})$.\end{proof}

\begin{corollary}\label{IorII}If Case II takes place, then
\begin{equation}\label{conclII}G_{\mathbf{f}}(\boldsymbol{\gamma})=G^{\boldsymbol{\sigma}}_{\mathbf{f}}(\mathbf{A},\mathbf{a},\mathbf{g})\quad
\text{for all \ }\boldsymbol{\gamma}\in\mathfrak
E^{\boldsymbol{\sigma}}_{\mathbf{f}}(\mathbf{A},\mathbf{a},\mathbf{g}).\end{equation}
\end{corollary}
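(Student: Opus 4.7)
The plan is to exploit the Hilbert-space expression (\ref{yusss}) for the $\mathbf{f}$-weighted energy available in Case~II, which makes $G_{\mathbf{f}}$ strongly continuous on $\mathcal E^+(\mathbf A)$, and then combine this with the very definition of an extremal measure.

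Fix $\boldsymbol{\gamma}\in\mathfrak E^{\boldsymbol{\sigma}}_{\mathbf{f}}(\mathbf{A},\mathbf{a},\mathbf{g})$. By Definition~\ref{def:extr} there is a minimizing net $(\boldsymbol{\mu}_s)_{s\in S}\in\mathbb M^{\boldsymbol{\sigma}}_{\mathbf{f}}(\mathbf{A},\mathbf{a},\mathbf{g})$ converging to $\boldsymbol{\gamma}$ both strongly and vaguely. By Lemma~\ref{lemma:WM}(i), $\boldsymbol{\gamma}\in\mathcal E^+_{\boldsymbol{\sigma}}(\mathbf A,\leqslant\!\mathbf a,\mathbf g)\subset\mathcal E^+(\mathbf A)$; and in Case~II, Lemma~\ref{caseii} asserts that $\mathcal E^+(\mathbf A)=\mathcal E^+_{\mathbf f}(\mathbf A)$, so $G_{\mathbf f}(\boldsymbol{\gamma})$ is a finite real number given by $\|R\boldsymbol{\gamma}+\zeta\|^2-\|\zeta\|^2$.

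Next I would write, using (\ref{yusss}),
\[
G_{\mathbf f}(\boldsymbol{\mu}_s)-G_{\mathbf f}(\boldsymbol{\gamma})=\|R\boldsymbol{\mu}_s+\zeta\|^2-\|R\boldsymbol{\gamma}+\zeta\|^2.
\]
The strong convergence $\boldsymbol{\mu}_s\to\boldsymbol{\gamma}$ in the semimetric space $\mathcal E^+(\mathbf A)$ together with the isometry identity (\ref{seminorm}) yields $\|R\boldsymbol{\mu}_s-R\boldsymbol{\gamma}\|_{\mathcal E}\to 0$; hence by the reverse triangle inequality in the pre-Hilbert space $\mathcal E$ one obtains $\|R\boldsymbol{\mu}_s+\zeta\|\to\|R\boldsymbol{\gamma}+\zeta\|$, and therefore $G_{\mathbf f}(\boldsymbol{\mu}_s)\to G_{\mathbf f}(\boldsymbol{\gamma})$. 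On the other hand, the minimizing property (\ref{min}) gives $G_{\mathbf f}(\boldsymbol{\mu}_s)\to G^{\boldsymbol{\sigma}}_{\mathbf f}(\mathbf A,\mathbf a,\mathbf g)$. Uniqueness of limits then delivers (\ref{conclII}).

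There is no real obstacle: the corollary is just the observation that in Case~II the functional $G_{\mathbf f}$ is strongly continuous on $\mathcal E^+(\mathbf A)$ via the isometry $R$, so any strong limit of a minimizing net automatically attains the infimum value. The slightly delicate point, easily dispatched by Lemma~\ref{lemma:WM}(i) together with Lemma~\ref{caseii}, is that $G_{\mathbf f}(\boldsymbol{\gamma})$ is a priori well defined and finite, even though $\boldsymbol{\gamma}$ need not belong to the admissible class $\mathcal E^+_{\boldsymbol{\sigma},\mathbf f}(\mathbf A,\mathbf a,\mathbf g)$ (only $\langle g_i,\gamma^i\rangle\leqslant a_i$ is guaranteed by (\ref{ME})). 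Note that the Case~I analogue fails precisely because $\langle\mathbf f,\cdot\rangle$ is then only vaguely lower semicontinuous rather than strongly continuous.
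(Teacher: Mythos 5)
Your proposal is correct and follows essentially the same route as the paper's own proof: apply (\ref{yusss}) to both $\boldsymbol{\gamma}$ and the terms of a minimizing net converging strongly to $\boldsymbol{\gamma}$, use the isometry (\ref{seminorm}) and continuity of the seminorm to pass to the limit, and substitute (\ref{min}). The only difference is cosmetic — you make the reverse-triangle-inequality step explicit where the paper leaves it implicit.
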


\begin{proof} Applying (\ref{yusss}) to $\boldsymbol{\mu}_s$, $s\in S$, and
$\boldsymbol{\gamma}$, where $(\boldsymbol{\mu}_s)_{s\in
S}\in\mathbb
M^{\boldsymbol{\sigma}}_{\mathbf{f}}(\mathbf{A},\mathbf{a},\mathbf{g})$
and $\boldsymbol{\gamma}\in\mathfrak
E^{\boldsymbol{\sigma}}_{\mathbf{f}}(\mathbf{A},\mathbf{a},\mathbf{g})$
are arbitrarily given, and using the fact that, in accordance with
Lemma~\ref{lemma:WM}, $\boldsymbol{\mu}_s\to\boldsymbol{\gamma}$
strongly, we get
\[G_{\mathbf{f}}(\boldsymbol{\gamma})=\|R\boldsymbol{\gamma}+\zeta\|^2-\|\zeta\|^2=\lim_{s\in S}\,
\bigl[\|R\boldsymbol{\mu}_s+\zeta\|^2-\|\zeta\|^2\bigr]=\lim_{s\in
S}\,G_{\mathbf{f}}(\boldsymbol{\mu}_s).\] Substituting (\ref{min})
into the last relation gives (\ref{conclII}), as desired.\end{proof}

\subsection{Extremal measures: $g_i$-masses of the $i$-components}

\begin{lemma}\label{lemma:exist'} Let $\kappa$, $\mathbf{A}$,
$\mathbf{a}$, and $\mathbf{g}$ satisfy all the assumptions
\mbox{\rm(a)--(c)} of Theorem\/~{\rm\ref{exist}}, and let a net
$(\boldsymbol{\mu}_s)_{s\in S}\subset\mathcal E^+(\mathbf{A})$ be
strongly bounded and converge vaguely to~$\boldsymbol{\mu}_0$. If,
moreover, $\langle g_i,\mu_s^i\rangle=a_i$ for all $s\in S$ and
$i\in I$, then
\begin{equation}
\langle g_i,\mu_0^i\rangle=a_i\quad\text{for all \ } i\in I.
\label{24}
\end{equation}
\end{lemma}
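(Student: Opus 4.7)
I will establish the two inequalities $\langle g_i,\mu_0^i\rangle \leqslant a_i$ and $\langle g_i,\mu_0^i\rangle \geqslant a_i$ separately, for each fixed $i\in I$. The first follows immediately from vague lower semicontinuity: by Lemma~\ref{lemma:lower} applied to the positive continuous (hence lower semicontinuous) function $g_i$, the map $\nu\mapsto\langle g_i,\nu\rangle$ is vaguely lower semicontinuous on $\mathfrak M^+(A_i)$, whence $\langle g_i,\mu_0^i\rangle \leqslant \liminf_s\langle g_i,\mu_s^i\rangle = a_i$.

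For the reverse inequality I reduce the problem to a uniform tail estimate. Pick a net of cut-offs $\varphi_\alpha\in\mathrm C_0(\mathrm X)$ with $0\leqslant\varphi_\alpha\leqslant 1$ and $\varphi_\alpha\uparrow 1$ on $A_i$; for each fixed $\alpha$ the product $g_i\varphi_\alpha$ lies in $\mathrm C_0(A_i)$, so vague convergence gives $\langle g_i\varphi_\alpha,\mu_s^i\rangle\to\langle g_i\varphi_\alpha,\mu_0^i\rangle$. In view of $\langle g_i,\mu_s^i\rangle=a_i$, a uniform tail estimate
\[
\lim_\alpha\,\sup_{s\in S}\,\langle g_i(1-\varphi_\alpha),\mu_s^i\rangle = 0
\]
would yield $\langle g_i\varphi_\alpha,\mu_0^i\rangle \geqslant a_i-\varepsilon_\alpha$ with $\varepsilon_\alpha\to 0$; monotone convergence on $\mu_0^i$ (as $\varphi_\alpha\uparrow 1$) then produces $\langle g_i,\mu_0^i\rangle \geqslant a_i$, completing the argument.

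The uniform tail estimate is where assumptions (b) and (c) come in. I reduce it to the tightness bound $\lim_\alpha\,\sup_s\,\mu_s^i(A_i\setminus K_\alpha)=0$, where $K_\alpha:=\{\varphi_\alpha=1\}\cap A_i$. In case (b)-first ($g_{i,\sup}<\infty$) the reduction is trivial from $\langle g_i(1-\varphi_\alpha),\mu_s^i\rangle \leqslant g_{i,\sup}\,\mu_s^i(A_i\setminus K_\alpha)$. In case (b)-second ($g_i^{r_i}\leqslant\kappa(\cdot,\tau_i)$ n.e.\ on $A_i$), H\"older's inequality with exponents $r_i,\,r_i'$ together with the uniform upper bound $\langle g_i^{r_i},\mu_s^i\rangle \leqslant \kappa(\mu_s^i,\tau_i^+) \leqslant \|\mu_s^i\|\,\|\tau_i^+\|$ (the right side being bounded by Corollary~\ref{cor:aux1} and Cauchy--Schwarz) controls $\langle g_i(1-\varphi_\alpha),\mu_s^i\rangle$ by a uniform constant times $\mu_s^i(A_i\setminus K_\alpha)^{1/r_i'}$.

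The tightness step is the main obstacle, and it is where (c) is used. If $A_i$ is compact it is trivial; otherwise $C_\kappa(A_i)<\infty$, and the capacity estimate $\mu(E)^2 \leqslant C_\kappa(E)\,\|\mu\|^2$ (valid for $\mu\in\mathcal E^+$ via the capacitary measure of $E$ together with the monotonicity $\|\mu|_E\|\leqslant\|\mu\|$ coming from $\kappa\geqslant 0$) gives $\mu_s^i(A_i\setminus K_\alpha)^2 \leqslant M\,C_\kappa(A_i\setminus K_\alpha)$, where $M:=\sup_s\|\mu_s^i\|^2<\infty$ by Corollary~\ref{cor:aux1}. A compact exhaustion $K_\alpha\uparrow A_i$ exists by (a): either $A_i$ is $\sigma$-compact directly, or $g_{i,\inf}>0$, in which case the uniform bound $\mu_s^i(A_i)\leqslant a_i/g_{i,\inf}$ allows one to restrict to $\sigma$-compact parts of $A_i$ carrying all but a negligible portion of the $\mu_s^i$-mass. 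The delicate assertion $C_\kappa(A_i\setminus K_\alpha)\to 0$ I plan to derive from the capacitary measure $\gamma_{A_i}$ of $A_i$: its restriction to $A_i\setminus K_\alpha$ has mass $\gamma_{A_i}(A_i\setminus K_\alpha)\to 0$ and its energy is majorised by this mass via $\kappa(\cdot,\gamma_{A_i})\leqslant 1$ on $\mathrm{supp}\,\gamma_{A_i}$, which through the variational characterisation of $C_\kappa$ forces $C_\kappa(A_i\setminus K_\alpha)\to 0$.
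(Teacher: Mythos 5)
Your overall strategy coincides with the paper's: the inequality $\langle g_i,\mu_0^i\rangle\leqslant a_i$ from vague lower semicontinuity, and the reverse inequality from a uniform (in $s$) tail estimate over complements of compact sets, obtained by combining the growth condition (b) with an equilibrium-measure/H\"older argument and using (c) to ensure $C_\kappa(A_i)<\infty$. Your reductions down to the final step are essentially sound (modulo the routine point, which the paper handles explicitly via condition (a) and \cite[Lemma~2.3.1]{F1}, that inequalities valid n.e.\ in $A_i$ hold $\mu_s^i$-a.e.), and they all funnel into one claim: $C_\kappa(A_i\setminus K_\alpha)\to 0$ as $K_\alpha$ increases through the compact subsets of~$A_i$.

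That claim is exactly where your proof has a genuine gap. Your proposed derivation --- restrict the capacitary measure $\gamma_{A_i}$ to $A_i\setminus K_\alpha$, note that its mass and hence its energy tend to $0$, and conclude via ``the variational characterisation'' --- is a non sequitur. Smallness of $C_\kappa(E)$ means that \emph{every} unit measure of finite energy carried by a compact subset of $E$ has \emph{large} energy (one has $C(E)=\sup\nu(E)^2/\|\nu\|^2$ over such $\nu$); exhibiting one particular measure on $E$ with small mass and small energy gives no upper bound on $C(E)$ at all. The convergence $C(A_i\setminus K_\alpha)\to0$ is in fact delicate: interior capacity is not continuous along decreasing nets of sets in general, and this is precisely the point where the standing hypothesis that $\kappa$ is \emph{consistent} enters --- a hypothesis you never invoke. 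The paper's argument runs as follows: by Lemma~4.1.1 and Theorem~4.1 of~\cite{F1}, $\|\theta_{E}-\theta_{E'}\|^2\leqslant\|\theta_E\|^2-\|\theta_{E'}\|^2$ whenever $E'\subset E$, so the interior equilibrium measures $\theta_{A_i\setminus K}$ form a strong Cauchy net as $K$ increases (their norms being bounded and nonincreasing); this net converges vaguely to~$0$; property (C$_1$) then forces $0$ to be a strong limit, whence $C(A_i\setminus K)=\|\theta_{A_i\setminus K}\|^2\to0$. Without consistency the strong Cauchy net need not converge strongly to its vague limit, and I do not see how to close your argument while avoiding this mechanism.
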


\begin{proof} Fix $i\in I$. By Corollary~\ref{cor:aux1},
the net $(\mu^i_s)_{s\in S}$ is strongly bounded as well.

Also note that $A_i$ can be written as a countable union of
$\mu_s^i$-integrable sets, where $s\in S$ is given. Indeed, this is
obvious if $A_i$ is a countable union of compact sets; otherwise,
due to condition~(a) of Theorem~\ref{exist}, we have $g_{i,\inf}>0$
and hence $\mu_s^i(A_i)\leqslant a_ig_{i,\inf}^{-1}<\infty$.
Therefore, the concept of local $\mu_s^i$-negligibility and that of
$\mu_s^i$-negligibility coincide. Together with
\cite[Lemma~2.3.1]{F1}, this yields that any proposition holds
$\mu_s^i$-a.e. in~$\mathrm X$ provided it holds n.e. in~$A_i$.

We proceed by establishing (\ref{24}). Of course, this needs to be
done only if the set~$A_i$ is noncompact; then, by condition~(c),
its capacity has to be finite. Hence, by~\cite[Th.~4.1]{F1}, for
every $E\subset A_i$ there exists a measure $\theta_E\in\mathcal
E^+(\,\overline{E}\,)$, called an {\em interior equilibrium
measure\/} associated with~$E$, which admits the properties
\begin{equation}
\theta_E(\mathrm X)=\|\theta_E\|^2=C(E), \label{5}
\end{equation}
\begin{equation}
\kappa(x,\theta_E)\geqslant1\quad\text{n.e. in \ } E. \label{6}
\end{equation}

Also observe that there is no loss of generality in assuming $g_i$
to satisfy~(\ref{growth}) with some $r_i\in(1,\infty)$ and
$\tau_i\in\mathcal E$. Indeed, otherwise, due to condition~(b) of
Theorem~\ref{exist}, $g_i$ has to be bounded from above (say by
$M$), which combined with~(\ref{6}) again gives~(\ref{growth}) for
$\tau_i:=M^{r_i}\,\theta_{A_i}$, $r_i\in(1,\infty)$ being arbitrary.

We treat $A_i$ as a locally compact space with the topology induced
from~$\mathrm X$. Given $E\subset A_i$, let $\chi_E$ denote its
characteristic function and let $E^c:=A_i\setminus E$. Further, let
$\{K_i\}$ be the increasing family of all compact subsets~$K_i$
of~$A_i$. Since $g_i\chi_{K_i}$ is upper semicontinuous on~$A_i$
while $(\mu_s^i)_{s\in S}$ converges to~$\mu_0^i$ vaguely, from
Lemma~\ref{lemma:lower} we get
\[
\langle g_i\chi_{K_i},\mu_0^i\rangle\geqslant\limsup_{s\in
S}\,\langle g_i\chi_{K_i},\mu_s^i\rangle\quad\text{for every \ }
K_i\in\{K_i\}.\] On the other hand, application of Lemma~1.2.2
from~\cite{F1} yields
\[
\langle g_i,\mu_0^i\rangle=\lim_{K_i\in\{K_i\}}\,\langle
g_i\chi_{K_i},\mu_0^i\rangle.\] Combining the last two relations, we
obtain
\[
a_i\geqslant\langle g_i,\mu_0^i\rangle\geqslant\limsup_{(s,K_i)\in
S\times\{K_i\}}\,\langle g_i\chi_{K_i},\mu_s^i\rangle=
a_i-\liminf_{(s,K_i)\in S\times\{K_i\}}\,\langle
g_i\chi_{K^c_i},\mu_s^i\rangle,\] $S\times\{K_i\}$ being the
directed product of the directed sets~$S$ and~$\{K_i\}$. Hence, if
we prove
\begin{equation}
\liminf_{(s,K_i)\in S\times\{K_i\}}\,\langle
g_i\chi_{K^c_i},\mu_s^i\rangle=0, \label{25}
\end{equation}
the desired relation~(\ref{24}) follows.

Consider an interior equilibrium measure $\theta_{K^c_i}$, where
$K_i\in\{K_i\}$ is given. Then application of~Lemma~4.1.1 and
Theorem~4.1 from~\cite{F1} shows that
\[
\|\theta_{K^c_i}-\theta_{\tilde{K}^c_i}\|^2\leqslant
\|\theta_{K^c_i}\|^2-\|\theta_{\tilde{K}^c_i}\|^2\quad\text{provided
\ }K_i\subset\tilde{K}_i.\] Furthermore, it is clear from~(\ref{5})
that the net $\|\theta_{K^c_i}\|$, $K_i\in\{K_i\}$, is bounded and
nonincreasing, and hence fundamental in~$\mathbb R$. The preceding
inequality thus yields that the net
$(\theta_{K^c_i})_{K_i\in\{K_i\}}$ is strongly fundamental
in~$\mathcal E$. Since, clearly, it converges vaguely to zero, the
property~(C$_1$) (see.~Sec.~\ref{sec:2}) implies that zero is also
one of its strong limits and, hence,
\begin{equation}
\lim_{K_i\in\{K_i\}}\,\|\theta_{K^c_i}\|=0. \label{27}
\end{equation}

Write $q_i:=r_i(r_i-1)^{-1}$, where $r_i\in(1,\infty)$ is the number
involved in condition~(\ref{growth}). Combining (\ref{growth}) with
(\ref{6}) shows that the inequality
\[
g_i(x)\chi_{K^c_i}(x)\leqslant\kappa(x,\tau_i)^{1/r_i}
\kappa(x,\theta_{K^c_i})^{1/q_i}\]  subsists n.e.~in~$A_i$ and,
hence, $\mu_s^i$-a.e.~in~$\mathrm X$. Having integrated this
relation with respect to~$\mu_s^i$, we then apply the H\"older and,
subsequently, the Cauchy--Schwarz inequalities to the integrals on
the right. This gives
\begin{eqnarray*}
\langle g_i\chi_{K^c_i},\mu_s^i\rangle&\leqslant&
\Bigl[\int\kappa(x,\tau_i)\,d\mu_s^i(x)\Bigr]^{1/r_i}\,
\Bigl[\int\kappa(x,\theta_{K^c_i})\,d\mu_s^i(x)\Bigr]^{1/q_i}\\
{}
&\leqslant&\|\tau_i\|^{1/r_i}\|\theta_{K^c_i}\|^{1/q_i}\|\mu_s^i\|.
\end{eqnarray*}
Taking limits here along $S\times\{K\}$ and using (\ref{7.1i}) and
(\ref{27}), we obtain~(\ref{25}) as desired.\end{proof}

\begin{corollary}\label{lemma:exist} Under the assumptions of Theorem\/~{\rm\ref{exist}}, we have
\begin{equation}\label{masses}\mathfrak
E^{\boldsymbol{\sigma}}_{\mathbf{f}}(\mathbf{A},\mathbf{a},\mathbf{g})\subset\mathcal
E^+_{\boldsymbol{\sigma}}(\mathbf{A},\mathbf{a},\mathbf{g}).\end{equation}
\end{corollary}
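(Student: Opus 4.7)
The plan is to reduce the corollary to a direct application of Lemma~\ref{lemma:exist'}. Fix an arbitrary $\boldsymbol{\gamma}\in\mathfrak E^{\boldsymbol{\sigma}}_{\mathbf{f}}(\mathbf{A},\mathbf{a},\mathbf{g})$. By Definition~\ref{def:extr} there is a net $(\boldsymbol{\mu}_s)_{s\in S}\in\mathbb M^{\boldsymbol{\sigma}}_{\mathbf{f}}(\mathbf{A},\mathbf{a},\mathbf{g})$ that converges to $\boldsymbol{\gamma}$ both strongly and vaguely. In particular, $(\boldsymbol{\mu}_s)_{s\in S}$ is strongly bounded (being strongly convergent), and by membership in $\mathcal E^+_{\boldsymbol{\sigma},\mathbf{f}}(\mathbf{A},\mathbf{a},\mathbf{g})$ it satisfies $\langle g_i,\mu_s^i\rangle=a_i$ for every $s\in S$ and every $i\in I$.

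From here, Lemma~\ref{lemma:exist'} applies verbatim to $(\boldsymbol{\mu}_s)_{s\in S}$ and its vague limit $\boldsymbol{\gamma}$, yielding
\[
\langle g_i,\gamma^i\rangle=a_i\quad\text{for all \ }i\in I.
\]
Combining this with the inclusion $\mathfrak E^{\boldsymbol{\sigma}}_{\mathbf{f}}(\mathbf{A},\mathbf{a},\mathbf{g})\subset\mathcal E^+_{\boldsymbol{\sigma}}(\mathbf{A},\leqslant\!\mathbf{a},\mathbf{g})$ already supplied by Lemma~\ref{lemma:WM}(i), so that $\boldsymbol{\gamma}\in\mathcal E^+(\mathbf{A})$ and $\boldsymbol{\gamma}\leqslant\boldsymbol{\sigma}$, we upgrade the inequality $\langle g_i,\gamma^i\rangle\leqslant a_i$ to an equality. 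This places $\boldsymbol{\gamma}$ in $\mathcal E^+_{\boldsymbol{\sigma}}(\mathbf{A},\mathbf{a},\mathbf{g})$, as required, and proves the asserted inclusion \eqref{masses}.

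There is no real obstacle here: the entire content of the corollary is a clean corollary of Lemma~\ref{lemma:exist'} once one observes that the defining minimizing net of an extremal measure is automatically strongly bounded (since strongly convergent nets are strong Cauchy) and has all its components of fixed total $g_i$-mass equal to~$a_i$. The only small point to mention explicitly is verifying that both the standing hypotheses and conditions (a)--(c) of Theorem~\ref{exist} are in force, so that Lemma~\ref{lemma:exist'} is indeed applicable; this is guaranteed since the corollary is stated precisely under the assumptions of Theorem~\ref{exist}.
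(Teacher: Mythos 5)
Your proposal is correct and follows essentially the same route as the paper: extract the generating net of the extremal measure, apply Lemma~\ref{lemma:exist'} to conclude $\langle g_i,\gamma^i\rangle=a_i$, and combine with the inclusion (\ref{ME}) from Lemma~\ref{lemma:WM}(i). The only quibble is your justification of strong boundedness: for a \emph{net} (unlike a sequence), strong convergence or the strong Cauchy property only guarantees \emph{eventual} boundedness, since the indices not following a given $s_0$ need not form a finite set; one should therefore pass to a (tail) subnet --- which still converges strongly and vaguely to $\boldsymbol{\gamma}$ --- exactly as the paper does with its phrase ``taking a subnet if necessary.''
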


\begin{proof}Fix $\boldsymbol{\gamma}\in\mathfrak
E^{\boldsymbol{\sigma}}_{\mathbf{f}}(\mathbf{A},\mathbf{a},\mathbf{g})$;
then there exists a net $(\boldsymbol{\mu}_s)_{s\in
S}\subset\mathcal
E^+_{\boldsymbol{\sigma},\mathbf{f}}(\mathbf{A},\mathbf{a},\mathbf{g})$
converging to~$\boldsymbol{\gamma}$ strongly and vaguely. Taking a
subnet if necessary, we assume $(\boldsymbol{\mu}_s)_{s\in S}$ to be
strongly bounded. Then, by Lemma~\ref{lemma:exist'}, $\langle
g_i,\gamma^i\rangle=a_i$ for all $i\in I$, which together
with~(\ref{ME}) gives~(\ref{masses}).\end{proof}

\section{Proof of Theorem~\ref{exist}}\label{sec:proof.th.str}

Fix an extremal measure $\boldsymbol{\gamma}\in\mathfrak
E^{\boldsymbol{\sigma}}_{\mathbf{f}}(\mathbf{A},\mathbf{a},\mathbf{g})$~---
it exists by~Lemma~\ref{lemma:WM},~(i), and choose a net
$(\boldsymbol{\mu}_s)_{s\in S}\in\mathbb
M^{\boldsymbol{\sigma}}_{\mathbf{f}}(\mathbf{A},\mathbf{a},\mathbf{g})$
that converges to~$\boldsymbol{\gamma}$ both strongly and vaguely.
We are going to show that $\boldsymbol{\gamma}$ is a minimizer, i.e.
\begin{equation}\label{minimizer0}\boldsymbol{\gamma}\in\mathfrak
S^{\boldsymbol{\sigma}}_{\mathbf{f}}(\mathbf{A},\mathbf{a},\mathbf{g}).\end{equation}

According to Corollary~\ref{lemma:exist}, we have
$\boldsymbol{\gamma}\in\mathcal
E^+_{\boldsymbol{\sigma}}(\mathbf{A},\mathbf{a},\mathbf{g})$. Hence,
(\ref{minimizer0}) will be established once we prove that
$\sum_{i\in I}\,\langle f_i,\gamma^i\rangle$ converges absolutely,
so that
\begin{equation}\label{inclusion}\boldsymbol{\gamma}\in\mathcal
E^+_{\boldsymbol{\sigma},\mathbf{f}}(\mathbf{A},\mathbf{a},\mathbf{g}),\end{equation}
and
\begin{equation}\label{minimizer}G_{\mathbf{f}}(\boldsymbol{\gamma})\leqslant
G^{\boldsymbol{\sigma}}_{\mathbf{f}}(\mathbf{A},\mathbf{a},\mathbf{g}).\end{equation}

To this end, assume Case~I to hold, since otherwise
(\ref{inclusion}) and (\ref{minimizer}) have already been
established by Lemma~\ref{caseii} and Corollary~\ref{IorII},
respectively. Then, in consequence of Lemma~\ref{casei}
(see~(\ref{boundbelow}) with
$\boldsymbol{\mu}=\boldsymbol{\gamma}$),
$\langle\mathbf{f},\boldsymbol{\gamma}\rangle$ is well defined and
\begin{equation}\label{gamma}\langle\mathbf{f},\boldsymbol{\gamma}\rangle>-\infty.\end{equation}
Besides, from the strong and vague convergence
of~$(\boldsymbol{\mu}_s)_{s\in S}$ to~$\boldsymbol{\gamma}$ we
obtain
\begin{equation}G^{\boldsymbol{\sigma}}_{\mathbf{f}}(\mathbf{A},\mathbf{a},\mathbf{g})=\lim_{s\in
S}\,\bigl[\|\boldsymbol{\mu}_s\|^2+2\langle\mathbf{f},\boldsymbol{\mu}_s\rangle\bigr]=
\|\boldsymbol{\gamma}\|^2+2\lim_{s\in
S}\,\langle\mathbf{f},\boldsymbol{\mu}_s\rangle\label{chain1}\end{equation}
(consequently, $\lim_{s\in
S}\,\langle\mathbf{f},\boldsymbol{\mu}_s\rangle$ exists and is
finite) and \begin{align}\notag
\langle\mathbf{f},\boldsymbol{\gamma}\rangle=\sum_{i\in I}\,\langle
f_i,\gamma^i\rangle&\leqslant \sum_{i\in I}\,\liminf_{s\in
S}\,\langle f_i,\mu_s^i\rangle\\{}&\leqslant\lim_{s\in
S}\,\sum_{i\in I}\,\langle f_i,\mu_s^i\rangle=\lim_{s\in
S}\,\langle\mathbf{f},\boldsymbol{\mu}_s\rangle<\infty.\label{chain2}\end{align}
Combining~(\ref{gamma}) and~(\ref{chain2}) proves~(\ref{inclusion}),
while substituting (\ref{chain2}) into~(\ref{chain1})
gives~(\ref{minimizer}), and the required
inclusion~(\ref{minimizer0}) follows.

It has thus been proved that $\mathfrak
E^{\boldsymbol{\sigma}}_{\mathbf{f}}(\mathbf{A},\mathbf{a},\mathbf{g})\subset\mathfrak
S^{\boldsymbol{\sigma}}_{\mathbf{f}}(\mathbf{A},\mathbf{a},\mathbf{g})$.
This inclusion can certainly be inverted, since any minimizer
$\boldsymbol{\lambda}^{\boldsymbol{\sigma}}_{\mathbf{A}}$ can be
thought as an extremal measure generated by the constant net
$(\boldsymbol{\lambda}^{\boldsymbol{\sigma}}_{\mathbf{A}})$. On
account of~(\ref{WM}), we get
\begin{equation*}\label{SWM}\mathfrak S^{\boldsymbol{\sigma}}_{\mathbf{f}}(\mathbf{A},\mathbf{a},\mathbf{g})=
\mathfrak
E^{\boldsymbol{\sigma}}_{\mathbf{f}}(\mathbf{A},\mathbf{a},\mathbf{g})=\mathcal
M^{\boldsymbol{\sigma}}_{\mathbf{f}}(\mathbf{A},\mathbf{a},\mathbf{g}).
\end{equation*}
Therefore Lemma~\ref{lemma:WM} shows that $\mathfrak
S^{\boldsymbol{\sigma}}_{\mathbf{f}}(\mathbf{A},\mathbf{a},\mathbf{g})$
is vaguely compact. The proof is complete.\hfill$\square$

\section{Proof of Theorem~\ref{cor:sigma}}\label{sec:proof.cor:sigma}
It is seen from~(\ref{nonzero1}), (\ref{increas'}) and
Corollary~\ref{lemma:minusfinite} that, under the assumptions of the
theorem,
$G^{\boldsymbol{\sigma}_s}_{\mathbf{f}}(\mathbf{A}_s,\mathbf{a},\mathbf{g})$
increases as $s$ ranges through~$S$ and
\begin{equation*}\label{Gfund}-\infty<\lim_{s\in
S}\,G^{\boldsymbol{\sigma}_s}_{\mathbf{f}}(\mathbf{A}_s,\mathbf{a},\mathbf{g})\leqslant
G^{\boldsymbol{\sigma}}_{\mathbf{f}}(\mathbf{A},\mathbf{a},\mathbf{g})
<\infty.\end{equation*} Besides, in accordance with
Theorem~\ref{exist}, for every $s\geqslant s_0$ there is a minimizer
$\boldsymbol{\lambda}_s:=\boldsymbol{\lambda}^{\boldsymbol{\sigma}_s}_{\mathbf{A}_s}\in\mathfrak
S^{\boldsymbol{\sigma}_s}_{\mathbf{f}}(\mathbf{A}_s,\mathbf{a},\mathbf{g})$.
Therefore, $\lim_{s\in S}\,G_{\mathbf{f}}(\boldsymbol{\lambda}_s)$
exists and
\begin{equation}\label{Gfund2}
-\infty<\lim_{s\in
S}\,G_{\mathbf{f}}(\boldsymbol{\lambda}_s)\leqslant
G^{\boldsymbol{\sigma}}_{\mathbf{f}}(\mathbf{A},\mathbf{a},\mathbf{g})
<\infty.\end{equation} Also observe that, since $\mathbf{A}_s$ and
$\boldsymbol{\sigma}_s$ decrease along~$S$, it is true that
\begin{equation*}\label{belongs}\boldsymbol{\lambda}_{s}\in\mathcal
E^+_{\boldsymbol{\sigma}_\ell,\mathbf{f}}(\mathbf{A}_\ell,\mathbf{a},\mathbf{g})
\quad\text{for all \ }s\geqslant\ell\geqslant s_0.\end{equation*}

We proceed by showing that
\begin{equation}\label{convex}
\|\boldsymbol{\lambda}_{s_2}-
\boldsymbol{\lambda}_{s_1}\|^2\leqslant
G_{\mathbf{f}}(\boldsymbol{\lambda}_{s_2})-G_{\mathbf{f}}(\boldsymbol{\lambda}_{s_1})
\end{equation}
whenever $s_0\leqslant s_1\leqslant s_2$. For every $t\in(0,1]$,
$\boldsymbol{\mu}:=(1-t)\boldsymbol{\lambda}_{s_1}+
t\boldsymbol{\lambda}_{s_2}$ belongs to the class $\mathcal
E^+_{\boldsymbol{\sigma}_{s_1},\mathbf{f}}(\mathbf{A}_{s_1},\mathbf{a},\mathbf{g})$,
and therefore $G_{\mathbf{f}}(\boldsymbol{\mu})\geqslant
G_{\mathbf{f}}(\boldsymbol{\lambda}_{s_1})$. Evaluating the
left-hand side of this inequality and then letting $t\to0$, we get
\[-\|\boldsymbol{\lambda}_{s_1}\|^2+
\kappa(\boldsymbol{\lambda}_{s_1},\boldsymbol{\lambda}_{s_2})-
\langle\mathbf{f},\boldsymbol{\lambda}_{s_1}\rangle+
\langle\mathbf{f},\boldsymbol{\lambda}_{s_2}\rangle\geqslant0,\]
and~(\ref{convex}) follows.

Due to (\ref{Gfund2}), the net
$G_{\mathbf{f}}(\boldsymbol{\lambda}_s)$, $s\in S$, is fundamental
in~$\mathbb R$. When combined with (\ref{convex}), this implies that
$\boldsymbol{\lambda}_{s}$, $s\geqslant\ell\geqslant s_0$, form a
fundamental net in~$\mathcal
E^+_{\boldsymbol{\sigma}_\ell}(\mathbf{A}_\ell,\mathbf{a},\mathbf{g})$.
Hence, by Theorem~\ref{th:strong}, there exists a vague cluster
point~$\boldsymbol{\lambda}$ of $(\boldsymbol{\lambda}_{s})_{s\in
S}$ and the following two assertions hold:
\[\text{(i)}\quad\boldsymbol{\lambda}\in\mathcal
E^+_{\boldsymbol{\sigma}_s}(\mathbf{A}_s,\leqslant\!\mathbf{a},\mathbf{g})\quad\text{
for all\ }s\geqslant s_0;\qquad
\text{(ii)}\quad\boldsymbol{\lambda}_{s}\to\boldsymbol{\lambda}\text{
strongly}.\]

However, Lemma~\ref{lemma:exist'} with $\mathbf{A}_s$ instead of
$\mathbf{A}$ shows that assertion~(i) can be strengthened as
follows: $\boldsymbol{\lambda}\in\mathcal
E^+_{\boldsymbol{\sigma}_s}(\mathbf{A}_s,\mathbf{a},\mathbf{g})$ for
all $s\geqslant s_0$. In turn, this implies that, actually,
$\boldsymbol{\lambda}\in\mathcal
E^+_{\boldsymbol{\sigma}}(\mathbf{A},\mathbf{a},\mathbf{g})$, since
$\boldsymbol{\sigma}_s\to\boldsymbol{\sigma}$ vaguely while
$\mathbf{A}_s\downarrow\mathbf{A}$.

What has already been established yields that the proof of the
theorem will be complete once we show that $\sum_{i\in I}\,\langle
f_i,\lambda^i\rangle$ converges absolutely, so that
\begin{equation}\label{rem1}\boldsymbol{\lambda}\in\mathcal
E^+_{\boldsymbol{\sigma},\mathbf{f}}(\mathbf{A},\mathbf{a},\mathbf{g}),\end{equation}
and
\begin{equation}\label{rem2}
\langle\mathbf{f},\boldsymbol{\lambda}\rangle\leqslant\lim_{s\in
S}\,\langle\mathbf{f},\boldsymbol{\lambda}_{s}\rangle.
\end{equation}
Note that $\lim_{s\in
S}\,\langle\mathbf{f},\boldsymbol{\lambda}_{s}\rangle$ exists and is
finite, which is clear from (\ref{Gfund2}) and (ii).

We can suppose Case~I to hold, since otherwise (\ref{rem1}) is
already known from Lemma~\ref{caseii} while (\ref{rem2}) can be
obtained directly from~(\ref{yu}) and assertion~(ii). Therefore,
by~(\ref{boundbelow}) with $\boldsymbol{\mu}=\boldsymbol{\lambda}$,
$\langle\mathbf{f},\boldsymbol{\lambda}\rangle$ is well defined and
$\langle\mathbf{f},\boldsymbol{\lambda}\rangle>-\infty$. Taking a
subnet if necessary, we can also assume that
$\boldsymbol{\lambda}_{s}\to\boldsymbol{\lambda}$ vaguely. Then,
\begin{equation*}\label{hryu}-\infty<\langle\mathbf{f},\boldsymbol{\lambda}\rangle=
\sum_{i\in I}\,\langle f_i,\lambda^i\rangle\leqslant \sum_{i\in
I}\,\liminf_{s\in S}\,\langle
f_i,\lambda_s^i\rangle\leqslant\lim_{s\in S}\,\sum_{i\in I}\,\langle
f_i,\lambda_s^i\rangle<\infty,\end{equation*} and the required
relations (\ref{rem1}) and~(\ref{rem2}) follow.\hfill$\square$

\section{Proof of Theorem~\ref{cor:cont}}\label{sec:proof.th.cont}
We begin by establishing the relation
\begin{equation}
G^{\boldsymbol{\sigma}}_{\mathbf{f}}(\mathbf{A},\mathbf{a},\mathbf{g})=\lim_{\mathbf{K}\uparrow\mathbf{A}}\,
G^{\boldsymbol{\sigma}_{\mathbf{K}}}_{\mathbf{f}}(\mathbf{K},\mathbf{a},\mathbf{g}).\label{cont}
\end{equation}

For every $\boldsymbol{\mu}\in\mathcal
E^+_{\boldsymbol{\sigma},\mathbf{f}}(\mathbf{A},\mathbf{a},\mathbf{g})$,
consider
$\widehat{\boldsymbol{\mu}}_{\mathbf{K}}=(\hat{\mu}_{\mathbf{K}}^i)_{i\in
I}$ defined by~(\ref{hatlambda''}). Fix an arbitrary $\varepsilon>0$
and choose $\mathbf{K}_0$ so that for all $\mathbf{K}$ that
follow~$\mathbf{K}_0$ inclusion~(\ref{exh1}) holds. This yields
\begin{equation}
G_{\mathbf{f}}(\widehat{\boldsymbol{\mu}}_{\mathbf{K}})\geqslant
G^{(1+\varepsilon)\boldsymbol{\sigma}_{\mathbf{K}}}_{\mathbf{f}}(\mathbf{K},\mathbf{a},\mathbf{g}).\label{www}\end{equation}

We next proceed by showing that
\begin{equation}
G_{\mathbf{f}}(\boldsymbol{\mu})=
\lim_{\mathbf{K}\uparrow\mathbf{A}}\,G_{\mathbf{f}}(\widehat{\boldsymbol{\mu}}_{\mathbf{K}}).\label{4w}\end{equation}
To this end, it can be assumed that $\kappa\geqslant0$; for if not,
then $\mathbf{A}$ must be finite since $\mathrm X$ is compact, and
(\ref{4w}) follows from~(\ref{wnew})--(\ref{wwnew}). Therefore, for
all $\mathbf{K}\geqslant\mathbf{K}_0$ and $i\in I$ we get
\begin{equation}
\|\mu^i_{\mathbf{K}}\|\leqslant\|\mu^i\|\leqslant\|R\boldsymbol{\mu}^++R\boldsymbol{\mu}^-\|,
\label{unif3}
\end{equation}
\begin{equation}
\|\mu^i-\mu^i_{\mathbf{K}}\|<\varepsilon\,i^{-2},\label{uniff}
\end{equation}
the latter being clear from (\ref{unif1new}) because of
$\kappa(\mu^i_{\mathbf{K}},\mu^i-\mu^i_{\mathbf{K}})\geqslant0$.
Also observe that
\begin{equation*}
\begin{split}
\bigl|&\|\boldsymbol{\mu}\|^2-\|\widehat{\boldsymbol{\mu}}_{\mathbf{K}}\|^2\bigr|\leqslant\sum_{i,j\in
I}\,\Bigl|\kappa(\mu^i,\mu^j)- \frac{a_i}{\langle
g_i,\mu_{\mathbf{K}}^i\rangle}\frac{a_j}{\langle
g_j,\mu_{\mathbf{K}}^j\rangle}\,
\kappa(\mu_{\mathbf{K}}^i,\mu_{\mathbf{K}}^j)\Bigr|\\[3pt]
&{}\leqslant\sum_{i,j\in
I}\,\Bigl[\kappa(\mu^i-\mu^i_{\mathbf{K}},\mu^j)+\kappa(\mu^i_{\mathbf{K}},\mu^j-\mu^j_{\mathbf{K}})+
\Bigl(\frac{a_i}{\langle
g_i,\mu_{\mathbf{K}}^i\rangle}\frac{a_j}{\langle
g_j,\mu_{\mathbf{K}}^j\rangle}-1\Bigr)\,\kappa(\mu^i_{\mathbf{K}},\mu^j_{\mathbf{K}})\Bigr].
\end{split}
\end{equation*}
When combined with (\ref{unif2new}), (\ref{unif2'new}),
(\ref{unif3}), and~(\ref{uniff}), this yields
\[
\bigl|G_{\mathbf{f}}(\boldsymbol{\mu})-G_{\mathbf{f}}(\widehat{\boldsymbol{\mu}}_{\mathbf{K}})\bigr|
\leqslant M\varepsilon\quad\text{for all \
}\mathbf{K}\geqslant\mathbf{K}_0,\] where $M$ is finite and
independent of~$\mathbf{K}$, and the required relation~(\ref{4w})
follows.

Substituting (\ref{www}) into~(\ref{4w}), in view of
(\ref{increas'}) and the arbitrary choice of~$\boldsymbol{\mu}$ we
get
\[
G^{\boldsymbol{\sigma}}_{\mathbf{f}}(\mathbf{A},\mathbf{a},\mathbf{g})\geqslant
\lim_{\mathbf{K}\uparrow\mathbf{A}}\,G^{(1+\varepsilon)\boldsymbol{\sigma}_{\mathbf{K}}}_{\mathbf{f}}(\mathbf{K},\mathbf{a},\mathbf{g})
\geqslant
G^{(1+\varepsilon)\boldsymbol{\sigma}}_{\mathbf{f}}(\mathbf{A},\mathbf{a},\mathbf{g}).
\]
Letting $\varepsilon\to0$ and applying Theorem~\ref{sigma} to both
$\mathbf{A}$ and $\mathbf{K}$, we obtain
\[G^{\boldsymbol{\sigma}}_{\mathbf{f}}(\mathbf{A},\mathbf{a},\mathbf{g})=\lim_{\varepsilon\to0}\,
\Bigl[\lim_{\mathbf{K}\uparrow\mathbf{A}}\,
G^{(1+\varepsilon)\boldsymbol{\sigma}_{\mathbf{K}}}_{\mathbf{f}}(\mathbf{K},\mathbf{a},\mathbf{g})\Bigr]=
\lim_{\mathbf{K}\uparrow\mathbf{A}}\,G^{\boldsymbol{\sigma}_{\mathbf{K}}}_{\mathbf{f}}(\mathbf{K},\mathbf{a},\mathbf{g}),\]
which proves (\ref{cont}) as desired.

Fix
$\boldsymbol{\lambda}^{\boldsymbol{\sigma}_{\mathbf{K}}}_{\mathbf{K}}\in\mathfrak
S^{\boldsymbol{\sigma}_{\mathbf{K}}}_{\mathbf{f}}(\mathbf{K},\mathbf{a},\mathbf{g})$,
where $\mathbf{K}\in\{\mathbf{K}\}_{\mathbf{A}}$. As follows
from~(\ref{cont}), the net
$(\boldsymbol{\lambda}^{\boldsymbol{\sigma}_{\mathbf{K}}}_{\mathbf{K}})_{\mathbf{K}\in\{\mathbf{K}\}_{\mathbf{A}}}$
belongs to $\mathbb
M^{\boldsymbol{\sigma}}_{\mathbf{f}}(\mathbf{A},\mathbf{a},\mathbf{g})$
and, hence, it is strongly fundamental.

Further, for every $\mathbf{K}\in\{\mathbf{K}\}_{\mathbf{A}}$ choose
$\beta^*_{\mathbf{K}}\in(1,\infty)$ such that
$(\beta^*_{\mathbf{K}})_{\mathbf{K}\in\{\mathbf{K}\}_{\mathbf{A}}}$
decreases to~$1$ and, for all
$\beta_{\mathbf{K}}\in[1,\beta^*_{\mathbf{K}}]$ and
$\boldsymbol{\lambda}^{\beta_{\mathbf{K}}\boldsymbol{\sigma}_{\mathbf{K}}}_{\mathbf{K}}\in\mathfrak
S^{\beta_{\mathbf{K}}\boldsymbol{\sigma}_{\mathbf{K}}}_{\mathbf{f}}(\mathbf{K},\mathbf{a},\mathbf{g})$,
\begin{equation}\label{last1}\lim_{\mathbf{K}\in\{\mathbf{K}\}_{\mathbf{A}}}\,
\|\boldsymbol{\lambda}^{\beta_{\mathbf{K}}\boldsymbol{\sigma}_{\mathbf{K}}}_{\mathbf{K}}-
\boldsymbol{\lambda}^{\boldsymbol{\sigma}_{\mathbf{K}}}_{\mathbf{K}}\|=0,\end{equation}
\begin{equation}\label{last2}\lim_{\mathbf{K}\in\{\mathbf{K}\}_{\mathbf{A}}}\,
\bigl[G_{\mathbf{f}}(\boldsymbol{\lambda}^{\beta_{\mathbf{K}}\boldsymbol{\sigma}_{\mathbf{K}}}_{\mathbf{K}})-
G_{\mathbf{f}}(\boldsymbol{\lambda}^{\boldsymbol{\sigma}_{\mathbf{K}}}_{\mathbf{K}})\bigr]=0.\end{equation}
The existence of those $\beta^*_{\mathbf{K}}$ follows from
Theorem~\ref{cor:sigma}.

Then, combining (\ref{cont}) and~(\ref{last2}) gives~(\ref{contnew})
as required, while (\ref{last1}) together with the property of
strong fundamentality of
$(\boldsymbol{\lambda}^{\boldsymbol{\sigma}_{\mathbf{K}}}_{\mathbf{K}})_{\mathbf{K}\in\{\mathbf{K}\}_{\mathbf{A}}}$
shows that
$(\boldsymbol{\lambda}^{\beta_{\mathbf{K}}\boldsymbol{\sigma}_{\mathbf{K}}}_{\mathbf{K}})_{\mathbf{K}
\in\{\mathbf{K}\}_{\mathbf{A}}}$ is strongly fundamental as well.
Hence, by Theorem~\ref{th:strong} and Lemma~\ref{lemma:exist'}, the
vague cluster set of
$(\boldsymbol{\lambda}^{\beta_{\mathbf{K}}\boldsymbol{\sigma}_{\mathbf{K}}}_{\mathbf{K}})_{\mathbf{K}
\in\{\mathbf{K}\}_{\mathbf{A}}}$ is nonempty and for every its
element~$\boldsymbol{\lambda}$ the following assertions both hold:
$\boldsymbol{\lambda}\in\mathcal
E^+_{\boldsymbol{\sigma}}(\mathbf{A},\mathbf{a},\mathbf{g})$ and
$\boldsymbol{\lambda}^{\beta_{\mathbf{K}}\boldsymbol{\sigma}_{\mathbf{K}}}_{\mathbf{K}}\to\boldsymbol{\lambda}$
strongly. To complete the proof, it is enough to show that
$\boldsymbol{\lambda}\in\mathfrak
S^{\boldsymbol{\sigma}}_{\mathbf{f}}(\mathbf{A},\mathbf{a},\mathbf{g})$,
but this can be done in the same way as at the end of the proof of
Theorem~\ref{cor:sigma}.\phantom{p}\hfill$\square$

\section{On the sharpness of condition~(c) in Theorem~\ref{exist}}\label{sharp}

Given a closed set $F\subset\mathbb R^n$, for brevity let $\mathfrak
M^1(F)$ denote the collection of all probability measures supported
by~$F$. The examples below illustrate the sharpness of condition~(c)
in Theorem~\ref{exist}.

\subsection{Examples}

\begin{example}\label{121}
In $\mathbb R^n$, $n\geqslant 2$, consider the Riesz kernel
$\kappa_\alpha(x,y):=|x-y|^{\alpha-n}$ of order~$\alpha$,
$\alpha\in(0,2]$, $\alpha<n$, and a condenser
$\mathbf{A}=(A_1,A_2)$, where $I^+=\{1\}$, $I^-=\{2\}$, $A_1$ is
compact, and $C_{\kappa_\alpha}(A_i)>0$ for $i=1,2$. Also consider
the $\alpha$-Green kernel~$g^\alpha_{A^c_2}$ of $A^c_2:=\mathbb
R^n\setminus A_2$, defined by  (see, e.g., \cite[Chap.~4, \S~5]{L})
\[g^\alpha_{A^c_2}(x,y):=\kappa_\alpha(x,\varepsilon_y)-\kappa_\alpha(x,\beta^\alpha_{A_2}\varepsilon_y),\]
where $\varepsilon_y$ is the (unit) Dirac measure at~$y$ and
$\beta^\alpha_{A_2}$ is the operator of Riesz balayage onto~$A_2$.
Further, let $K\subset(A_1\cup A_2)^c$ be a compact set with
$C_{\kappa_\alpha}(K)>0$, and let $\theta$ denote the (unique)
minimizer in the minimal $\alpha$-Green energy problem
\[\inf_{\nu\in\mathfrak M^1(A_1\cup K)}\,g_{A^c_2}^\alpha(\nu,\nu);\]
then it holds true that
\[\|\theta\|^2_{g_{A^c_2}^\alpha}=
\|\theta-\beta^\alpha_{A_2}\theta\|^2_{\kappa_\alpha}=\bigl[C_{g_{A^c_2}^\alpha}(A_1\cup
K)\bigr]^{-1}.\]

Assume, moreover, that $\mathbf{f}$ satisfies Case~II with
$\zeta=\theta_{K}$, where $\theta_{K}$ is the trace of~$\theta$
upon~$K$, and let $a_1=\theta(A_1)$, $a_2=1$, $g_1=g_2=1$. Also
assume, for simplicity, $A^c_2$ to be connected.

\begin{proposition}\label{propex}Under the above notation and requirements,
the following two assertions are equivalent:

\begin{itemize}\item[\rm(i)]
One can choose a strictly increasing sequence of positive numbers
$R_k$, $k\in\mathbb N$, and measures $\omega_k\in\mathfrak
M^1\bigl(A_2\cap\{R_k\leqslant|x|\leqslant R_{k+1}\}\bigr)$ so that
\begin{equation}\label{nonsolv}\mathfrak S_{\mathbf
f}^{\boldsymbol{\sigma}}(\mathbf{A},\mathbf{a},\mathbf{g})=\varnothing,\end{equation}
where $\boldsymbol{\sigma}=(\sigma^1,\sigma^2)$ is a constraint with
the components
\begin{equation}\label{contr}
\sigma^1:=\theta_{A_1}\quad\text{and}\quad
\sigma^2:=\beta^\alpha_{A_2}\theta+\bigl[1-\beta^\alpha_{A_2}\theta(A_2)\bigr]\sum_{k\in\mathbb
N}\,\omega_k.\end{equation}
\item[\rm(ii)] $C_{\kappa_\alpha}(A_2)=\infty$, though $A_2$ is $\alpha$-thin
at $\infty_{\mathbb R^n}$.\end{itemize}\end{proposition}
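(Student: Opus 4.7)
Since $A_1$ is compact and $\sigma^1 = \theta_{A_1}$, the constraints $\mu^1 \leq \sigma^1$ and $\langle 1, \mu^1\rangle = a_1 = \theta(A_1)$ force $\mu^1 = \theta_{A_1}$. Applying Lemma~\ref{caseii} with $\zeta = \theta_K$ and using $R\boldsymbol{\mu} = \theta_{A_1} - \mu^2$ together with $\theta_{A_1} + \theta_K = \theta$, we get $G_{\mathbf{f}}(\boldsymbol{\mu}) + \|\theta_K\|_{\kappa_\alpha}^2 = \|\theta - \mu^2\|_{\kappa_\alpha}^2$. Since $\kappa_\alpha(\cdot, \beta^\alpha_{A_2}\theta) = \kappa_\alpha(\cdot, \theta)$ n.e.\ on $A_2$ by the defining property of Riesz balayage, and since any measure of finite $\kappa_\alpha$-energy does not charge sets of interior capacity zero, for every admissible $\mu^2$ we have the orthogonal decomposition
\[
\|\theta - \mu^2\|_{\kappa_\alpha}^2 = \|\theta - \beta^\alpha_{A_2}\theta\|_{\kappa_\alpha}^2 + \|\mu^2 - \beta^\alpha_{A_2}\theta\|_{\kappa_\alpha}^2 = \|\theta\|^2_{g^\alpha_{A_2^c}} + \|\mu^2 - \beta^\alpha_{A_2}\theta\|_{\kappa_\alpha}^2.
\]
Hence the problem reduces to minimizing $\|\mu^2 - \beta^\alpha_{A_2}\theta\|_{\kappa_\alpha}^2$ over $\mu^2 \in \mathcal{E}^+(A_2)$ subject to $\mu^2(A_2) = 1$ and $\mu^2 \leq \sigma^2$.

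\textbf{Direction (ii)$\Rightarrow$(i).} Under $\alpha$-thinness of $A_2$ at $\infty_{\mathbb R^n}$, classical balayage theory yields $c := 1 - \beta^\alpha_{A_2}\theta(A_2) > 0$. Because compact subsets of $\mathbb R^n$ have finite $\kappa_\alpha$-capacity and inner capacity is subadditive, $C_{\kappa_\alpha}(A_2) = \infty$ forces $C_{\kappa_\alpha}(A_2 \cap \{|x| \geq R\}) = \infty$ for every $R > 0$. Using compact exhaustion I choose $R_k \uparrow \infty$ inductively so that $E_k := A_2 \cap \{R_k \leq |x| \leq R_{k+1}\}$ satisfies $C_{\kappa_\alpha}(E_k) \geq k^2$; then the normalized equilibrium measures $\omega_k := \gamma_{E_k}/C_{\kappa_\alpha}(E_k) \in \mathfrak M^1(E_k)$ satisfy $\|\omega_k\|_{\kappa_\alpha}^2 \leq 1/k^2 \to 0$. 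With $\sigma^2$ given by~(\ref{contr}) and the test measures $\mu^2_k := \beta^\alpha_{A_2}\theta + c\omega_k$, disjointness of the annular supports yields $\mu^2_k \leq \sigma^2$ and $\mu^2_k(A_2) = 1$; by the decomposition above, $\|\mu^2_k - \beta^\alpha_{A_2}\theta\|_{\kappa_\alpha}^2 = c^2\|\omega_k\|_{\kappa_\alpha}^2 \to 0$, so $(\theta_{A_1}, \mu^2_k)$ is a minimizing net. If a minimizer $\widehat{\boldsymbol{\lambda}} = (\theta_{A_1}, \widehat\lambda^2)$ existed, Lemma~\ref{lemma:WM}(ii) would give $(\theta_{A_1}, \mu^2_k) \to \widehat{\boldsymbol\lambda}$ strongly in $\mathcal E^+(\mathbf A)$; since $\kappa_\alpha$ is strictly positive definite and $A_1 \cap A_2 = \varnothing$, Theorem~\ref{th:strong}(ii) upgrades this to vague convergence. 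But the supports of $\omega_k$ escape to infinity, so $\mu^2_k \to \beta^\alpha_{A_2}\theta$ vaguely, whence $\widehat\lambda^2 = \beta^\alpha_{A_2}\theta$, whose total mass $1-c < 1 = a_2$ contradicts admissibility.

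\textbf{Direction (i)$\Rightarrow$(ii), by contrapositive, and main obstacle.} If $A_2$ is not $\alpha$-thin at $\infty$, Riesz balayage of the compactly supported $\theta$ preserves mass, so $c = 0$, $\sigma^2 = \beta^\alpha_{A_2}\theta$, and the admissibility constraints $\mu^2 \leq \sigma^2$ together with $\mu^2(A_2) = 1 = \sigma^2(A_2)$ force $\mu^2 = \sigma^2$, yielding a unique minimizer independently of $R_k, \omega_k$. If instead $C_{\kappa_\alpha}(A_2) < \infty$, existence follows from Theorem~\ref{exist}: the Riesz kernel is perfect (hence consistent and strictly positive definite), $A_1$ compact and disjoint from closed $A_2$ gives $\operatorname{dist}(A_1, A_2) > 0$ so~(\ref{bou}) holds, (\ref{s}) is trivial for $g_i\equiv 1$, and (a), (b), (c) hold routinely, (c) by the finite-capacity assumption on $A_2$. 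The main obstacle is the orthogonal decomposition driving the reduction, which rests on the n.e.\ identity $\kappa_\alpha(\cdot,\beta^\alpha_{A_2}\theta) = \kappa_\alpha(\cdot,\theta)$ on $A_2$ combined with the standard fact that finite-energy measures do not charge polar sets; a secondary technical point is the capacity-at-infinity argument showing that (ii) forces $C_{\kappa_\alpha}(A_2 \cap \{|x|\geq R\}) = \infty$ for all $R$, which is where the interplay between $C_{\kappa_\alpha}(A_2) = \infty$ and the Riesz property $C_{\kappa_\alpha}(\text{compact}) < \infty$ is essential.
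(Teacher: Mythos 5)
Your proof is correct and follows essentially the same route as the paper's: reduce via Lemma~\ref{caseii} and Riesz balayage to the lower bound $G_{\mathbf f}(\boldsymbol{\mu})\geqslant\|\theta\|^2_{g^\alpha_{A_2^c}}-\|\theta_K\|^2_{\kappa_\alpha}$ with equality only at $(\theta_{A_1},\beta^\alpha_{A_2}\theta)$, use the mass-preservation criterion for $\alpha$-thinness at infinity to decide whether $\beta^\alpha_{A_2}\theta$ is admissible, build the escaping minimizing sequence $\beta^\alpha_{A_2}\theta+c\omega_k$ and exclude a minimizer via Lemma~\ref{lemma:WM} and Theorem~\ref{th:strong}, and invoke Theorem~\ref{exist} when $C_{\kappa_\alpha}(A_2)<\infty$. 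The only local differences — pinning $\mu^1=\theta_{A_1}$ directly from the constraint rather than from the equality case of the energy inequality, and replacing the paper's two-step inequality (which uses the Green-energy minimality of $\theta$ over $\mathfrak M^1(A_1\cup K)$) by the exact orthogonal identity $\|\theta-\mu^2\|^2_{\kappa_\alpha}=\|\theta\|^2_{g^\alpha_{A_2^c}}+\|\mu^2-\beta^\alpha_{A_2}\theta\|^2_{\kappa_\alpha}$, which also makes the cross term $\kappa_\alpha(\omega_k,\theta-\beta^\alpha_{A_2}\theta)$ vanish identically rather than only in the limit — are minor refinements, not a different argument.
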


Recall that a closed set $F\subset\mathbb R^n$ is $\alpha$-{\em thin
at\/} $\infty_{\mathbb R^n}$ if the origin $x=0$ is an
$\alpha$-irregular point for the inverse of~$F$ relative to the unit
sphere (see~\cite{Brelo2}; cf.~also~\cite{Brelo1,L}). See, e.g.,
\cite[Chap.~V]{L} for the notion of $\alpha$-regularity in case
$\alpha\in(0,2)$.\end{example}

\begin{example} With the notation and the requirements of Example~\ref{121},
let $\kappa_2(x,y)$ be the Newtonian kernel $|x-y|^{-1}$ in~$\mathbb
R^3$, and let $A_2$ be a rotational body consisting of all
$x=(x_1,x_2,x_3)\in\mathbb R^3$ such that $q\leqslant x_1<\infty$
and $0\leqslant x_2^2+x_3^2\leqslant\rho(x_1)$, where $q\in\mathbb
R$ and $\rho(x_1)$ approaches~$0$ as $x_1\to\infty$. Consider the
following three cases:
\begin{align}\rho(r)&=r^{-s},\phantom{\exp-}\quad\text{where \
}s\in[0,\infty),\label{ex1}\\
\rho(r)&=\exp(-r^s),\quad\text{where \ }s\in(0,1],\label{ex2}\\
\rho(r)&=\exp(-r^s),\quad\text{where \ }s>1.\label{ex3}\end{align}
As has been shown in~\cite{Z0}, $A_2$ is not $2$-thin
at~$\infty_{\mathbb R^3}$ in case~(\ref{ex1}), has finite
(Newtonian) capacity in case~(\ref{ex3}), and it is $2$-thin
at~$\infty_{\mathbb R^3}$ though has infinite (Newtonian) capacity
in case~(\ref{ex2}). Consequently, assertion~(i) from
Proposition~\ref{propex} on the unsolvability of the corresponding
constrained problem holds in case~(\ref{ex2}), and it fails to hold
in both cases~(\ref{ex1}) and~(\ref{ex3}).\end{example}

\subsection{Proof of Proposition~\ref{propex}}
Our arguments are based on Theorem 4 from~\cite{Z1}, which asserts
that, if $F\subset\mathbb R^n$ is closed, $\nu\geqslant0$ is
concentrated in~$F^c$ and if, for simplicity, $F^c$ is connected,
then
\begin{equation}\label{balayage}\beta^\alpha_{F}\nu(\mathbb R^n)=\nu(\mathbb
R^n)\iff F\text{\ is not $\alpha$-thin at $\infty_{\mathbb
R^n}$.}\end{equation}

Fix an arbitrary $\boldsymbol{\mu}=(\mu^1,\mu^2)\in\mathcal
E^+(\mathbf{A})$. Then, by~(\ref{yusss}),
\[G_{\mathbf{f}}(\boldsymbol{\mu})=
\|R\boldsymbol{\mu}+\theta_K\|_{\kappa_\alpha}^2-\|\theta_K\|_{\kappa_\alpha}^2=
\|\mu^1+\theta_K-\mu^2\|_{\kappa_\alpha}^2-\|\theta_K\|_{\kappa_\alpha}^2.\]
Since, by known facts from the Riesz and $\alpha$-Green potential
theory~\cite{L},
\begin{equation*}\|\mu^1+\theta_K-\mu^2\|_{\kappa_\alpha}^2\geqslant\|\mu^1+\theta_K-
\beta^\alpha_{A_2}(\mu^1+\theta_K)\|_{\kappa_\alpha}^2
=\|\mu^1+\theta_K\|^2_{g^\alpha_{A^c_2}}\geqslant\|\theta\|^2_{g^\alpha_{A^c_2}},
\end{equation*}
we get
\begin{equation}\label{chain}G_{\mathbf{f}}(\boldsymbol{\mu})
\geqslant\|\theta\|^2_{g^\alpha_{A^c_2}}-\|\theta_K\|_{\kappa_\alpha}^2,\end{equation}
where the inequality is actually an equality if and only if
\[\mu^1+\theta_K=\theta\quad\text{(hence, $\mu^1=\theta_{A_1}$) \
and \ }\mu^2=\beta^\alpha_{A_2}\theta.\]

Assume (i) to hold; then necessarily
$C_{\kappa_\alpha}(A_2)=\infty$, for if not, we would arrive at a
contradiction with Theorem~\ref{exist}. To establish~(ii), assume,
on the contrary, that $A_2$ is not $\alpha$-thin at~$\infty_{\mathbb
R^n}$. Then, by~(\ref{balayage}),
\begin{equation}\label{one}\beta^\alpha_{A_2}\theta(A_2)=1\end{equation} and consequently,
by~(\ref{contr}),
\begin{equation}\label{one'}\sigma^1=\theta_{A_1}\quad\text{and}\quad
\sigma^2=\beta^\alpha_{A_2}\theta.\end{equation} It follows from
(\ref{one}) and (\ref{one'}) that $\boldsymbol{\sigma}\in\mathcal
E_{\boldsymbol{\sigma}}^+(\mathbf{A},\mathbf{a},\mathbf{g})$ and
inequality~(\ref{chain}) for $\boldsymbol{\mu}=\boldsymbol{\sigma}$
is actually an equality. Thus, $\boldsymbol{\sigma}\in\mathfrak
S_{\mathbf
f}^{\boldsymbol{\sigma}}(\mathbf{A},\mathbf{a},\mathbf{g})$, which
is impossible by~(\ref{nonsolv}).

Now, assume (ii) to hold. Since $A_2$ is $\alpha$-thin at
$\infty_{\mathbb R^n}$, from~(\ref{balayage}) we get
\begin{equation}\label{c}c:=1-\beta^\alpha_{A_2}\theta(A_2)>0.\end{equation} Choose a strictly
increasing sequence $(R_k)_{k\in\mathbb N}$ with the property
$C_{\kappa_\alpha}\bigl(A_2^{(k)}\bigr)>k$, where
$A_2^{(k)}:=A_2\cap\{R_k\leqslant|x|\leqslant R_{k+1}\}$, which is
possible because of the assumption $C(A_2)=\infty$, and let
$\omega_k$ minimize $\kappa_\alpha(\nu,\nu)$ among all
$\nu\in\mathfrak M^1\bigl(A_2^{(k)}\bigr)$. Then
\[\lim_{k\to\infty}\,\|\omega_k\|_{\kappa_\alpha}^2=0=
\inf_{\nu\in\mathfrak M^1(A_2)}\,\kappa_\alpha(\nu,\nu),\] which
yields by standard arguments that $(\omega_k)_{k\in\mathbb N}$ is a
strong Cauchy sequence in~$\mathcal E_{\kappa_\alpha}(\mathbb R^n)$.
Since $\omega_k\to0$ vaguely, in view of the perfectness
of~$\kappa_\alpha$ we thus get
\begin{equation*}\label{omegastrong}
\omega_k\to0\quad\text{strongly.}\end{equation*}

Consider $\boldsymbol{\mu}_k=(\mu_k^1,\mu_k^2)$, $k\in\mathbb N$,
with $\mu_k^1=\theta_{A_1}$ and
$\mu_k^2=\beta^\alpha_{A_2}\theta+c\omega_k$, where $c$ is given
by~(\ref{c}). Then $\boldsymbol{\mu}_k\in\mathcal
E_{\boldsymbol{\sigma}}^+(\mathbf{A},\mathbf{a},\mathbf{g})$, where
$\boldsymbol{\sigma}$ is defined by~(\ref{contr}), and
\begin{align*}\|\mu_k^1+\theta_K-\mu_k^2\|_{\kappa_\alpha}^2&=
\|\theta-\beta^\alpha_{A_2}\theta-c\omega_k\|^2_{\kappa_\alpha}\\
&{}=
\|\theta\|_{g^\alpha_{A^c_2}}^2+c^2\|\omega_k\|^2_{\kappa_\alpha}-
2c\kappa_\alpha(\omega_k,\theta-\beta^\alpha_{A_2}\theta).\end{align*}
Letting here $k\to\infty$, we then obtain
\[\lim_{k\to\infty}\,G(\boldsymbol{\mu}_k)=\|\theta\|_{g^\alpha_{A^c_2}}^2-
\|\theta_K\|^2_{\kappa_\alpha}\] and so, by~(\ref{chain}),
$(\boldsymbol{\mu}_k)_{k\in\mathbb N}\in\mathbb
M^{\boldsymbol{\sigma}}_{\mathbf{f}}(\mathbf{A},\mathbf{a},\mathbf{g})$.
Since
$\boldsymbol{\mu}_k\to\boldsymbol{\gamma}:=(\theta_{A_1},\beta^\alpha_{A_2}\theta)$
strongly and vaguely, we get $\boldsymbol{\gamma}\in\mathfrak
E^{\boldsymbol{\sigma}}_{\mathbf{f}}(\mathbf{A},\mathbf{a},\mathbf{g})$.
Moreover, in view of the strict positive definiteness of the Riesz
kernel, $\boldsymbol{\gamma}$ is the only element of the class
$\mathfrak
E^{\boldsymbol{\sigma}}_{\mathbf{f}}(\mathbf{A},\mathbf{a},\mathbf{g})$
(see assertion~(ii) of Lemma~\ref{lemma:WM}). However, because
of~(\ref{c}), $\boldsymbol{\gamma}\not\in\mathfrak
S^{\boldsymbol{\sigma}}_{\mathbf{f}}(\mathbf{A},\mathbf{a},\mathbf{g})$.
This proves~(\ref{nonsolv}) and, hence,~(i).\hfill$\square$

\section*{Acknowledgments} The research was supported, in part, by
the "Scholar-in-Residence" program at IPFW, and the author
acknowledges this institution for the support and the excellent
working conditions. The author also thanks Professors
P.\,D.~Dragnev, E.\,B.~Saff, and W.\,L.~Wendland for many valuable
discussions about the content of the paper.

\end{document}